\newtheorem{theorem}{Theorem}[section]
\newtheorem{lemma}[theorem]{Lemma}
\newtheorem{corollary}[theorem]{Corollary}
\theoremstyle{definition}
\newtheorem{example}[theorem]{Example}
\theoremstyle{remark} \newtheorem{remark}[theorem]{Remark}
\numberwithin{equation}{section}
\numberwithin{figure}{section}
\numberwithin{algorithm}{section}
\newcommand{\wh}{\widehat}
\newcommand{\Sum}{\ensuremath{\mathop{\sum}\limits}}
\newcommand{\Tc}{\ensuremath{T_{c}}}
\newcommand{\Lt}{\ensuremath{{L^{2}}}}
\newcommand{\lt}{\ensuremath{{l^{2}}}}
\renewcommand{\l}[1]{\ensuremath{{l^{#1}}}}
\renewcommand{\L}[1]{\ensuremath{{L^{#1}}}}
\newcommand{\third}{\ensuremath{\frac{1}{3}}}
\newcommand{\half}{\ensuremath{\frac{1}{2}}}
\newcommand{\Argmin}{\mathop\mathrm{argmin}\limits}
\newcommand{\Linfty}{\ensuremath{{L^{\infty}}}}
\newcommand{\field}[1]{{\mathbb{#1}}}
\newcommand{\N}{\field{N}}
\newcommand{\R}{\field{R}} 
\newcommand{\Z}{\field{Z}}
\newcommand{\Ccal}{\mathscr{C}}
\newcommand{\Fcal}{\mathcal{F}}
\newcommand{\Ncal}{\mathcal{N}}
\newcommand{\loc}{{\mathrm{loc}}}
\newcommand{\NE}{{\mathrm{NE}}}
\newcommand{\nr}{{\mathrm{NR}}}
\newcommand{\tm}{\subseteq}
\newcommand{\dr}{\, \dif r}
\newcommand{\dt}{\, \dif t}
\newcommand{\dy}{\, \dif y}
\newcommand{\dtheta}{\, \dif \theta} 
\newcommand{\dxi}{\, \dif\xi}
\newcommand{\alphatilde}{{\widetilde\alpha}}
\newcommand{\betatilde}{{\widetilde\beta}}
\newcommand{\rmi}{\mathrm{i}}
\newcommand{\eps}{\varepsilon}
\DeclareMathAlphabet{\mathbi}{\encodingdefault}{\rmdefault}{\bfdefault}{\itdefault}
\DeclareRobustCommand{\vec}[1]{\ifmmode\mathbi{#1}\else\textbf{\textit{#1}}\fi}
\DeclareMathOperator{\dif}{d\!}
\DeclareMathOperator{\supp}{supp}
\begin{document}

\title{Uncertainty principles for inverse source problems, far field splitting and data completion}
\author{Roland
  Griesmaier\footnote{Institut f\"ur Mathematik,
    Universit\"at W\"urzburg, 97074 W\"urzburg, Germany
    ({\tt roland.griesmaier@uni-wuerzburg.de})}\;
  and John Sylvester\footnote{Department of Mathematics,
    University of Washington, Seattle, Washington 98195,
    U.S.A. ({\tt sylvest@uw.edu})}
}
\date{\today}

\maketitle

\begin{abstract}
Starting with far field data of time-harmonic acoustic or
  electromagnetic waves radiated by a collection of compactly
  supported sources in two-dimensional free space, we develop criteria
  and algorithms for the recovery of the far field components radiated
  by each of the individual sources, and the simultaneous
  restoration of missing data segments. Although both parts of this
  inverse problem are severely ill-conditioned in general, we give
  precise conditions relating the wavelength, the diameters of the
  supports of the individual source components and the distances
  between them, and the size of the missing data segments, which
  guarantee that stable recovery in presence of noise is possible.
  The only additional requirement is that a priori information on the
  approximate location of the individual sources is available.  We
  give analytic and numerical examples to confirm the sharpness of our
  results and to illustrate the performance of corresponding
  reconstruction algorithms, and we discuss consequences for stability
  and resolution in inverse source and inverse scattering
  problems.
\end{abstract}

{\small\noindent
  Mathematics subject classifications (MSC2010): 35R30, (65N21)
  \\\noindent 
  Keywords: Inverse source problem, Helmholtz equation, uncertainty principles, far field splitting, data completion, stable recovery
  \\\noindent
  Short title: Uncertainty principles for inverse source problems
}

\section{Introduction}
\label{sec:Introduction}
In signal processing, a classical uncertainty principle limits the
time-bandwidth product $|T||W|$ of a signal, where $|T|$ is the
measure of the support of the signal $\phi(t)$, and $|W|$ is the
measure of the support of its Fourier transform
$\wh{\phi}(\omega)$ (cf., e.g., \cite{DonSta89}). 
A very  elementary  formulation of that principle is 
\begin{equation}
  \label{eq:intro1}
  |\langle \phi,\psi\rangle| 
  \,\le\, \sqrt{|T||W|} \|\phi\|_2 \|\psi\|_2
\end{equation}
whenever $\supp \phi \tm T$ and $\supp \wh{\psi}\tm W$ . 

In the inverse source problem, the far field radiated by a source $f$
is its \emph{restricted} (to the unit sphere) \emph{Fourier
  transform}, and the operator that maps the restricted Fourier
transform of $f(x)$ to the restricted Fourier transform of its
translate $f(x+c)$ is called the \emph{far field translation
  operator}. 
We will prove an uncertainty principle analogous to \eqref{eq:intro1},
where the role of the Fourier transform is replaced by the far field
translation operator. 
Combining this principle with a \emph{regularized Picard criterion},
which characterizes the non-evanescent (i.e., detectable) far fields
radiated by a (limited power) source supported in a ball provides
simple proofs and extensions of several results about locating the
support of a source and about splitting a far field radiated by
well-separated sources into the far fields radiated by each source
component. 

We also combine the regularized Picard criterion with a more
conventional uncertainty principle for the map from  a far field  in
$L^{2}(S^{1})$ to its Fourier coefficients. 
This leads to a data completion algorithm which tells us that we can
deduce missing data (i.e.\ on part of  $S^{1}$) if we know \textit{a
  priori}  that the source has small support. 
All of these results can be combined so that we can simultaneously
complete the data and split the far fields into the components
radiated by well-separated sources. 
We discuss both  $l^{2}$ (least squares) and $l^{1}$ (basis pursuit)
algorithms to accomplish this. 

Perhaps the most significant point is that all of these algorithms
come with bounds on their condition numbers (both the splitting and
data completion problems are linear) which we show are sharp in their
dependence on geometry and wavenumber.  
These results highlight an important difference between the inverse
source problem and the inverse scattering problem. 
The conditioning of the linearized inverse scattering problem does not
depend on wavenumber, which means that the conditioning does not
deteriorate as we increase the wavenumber in order to increase
resolution. 
The conditioning for splitting and data completion for the inverse
source problem does, however, deteriorate with increased wavenumber,
which means the dynamic range of the sensors must increase with
wavenumber to obtain higher resolution. 

We note that applications of classical uncertainty principles for the
one-dimensional Fourier transform to data completion for band-limited
signals have been developed in \cite{DonSta89}.
In this classical setting a problem that is somewhat similar to far
field splitting is the representation of highly sparse signals in
overcomplete dictionaries. 
Corresponding stability results for basis pursuit reconstruction
algorithms have been established in \cite{DonElaTem061}. 

The numerical algorithms for far field splitting that we are going to
discuss have been developed and analyzed in
\cite{GriHanSyl14,GriSyl16}. 
The novel mathematical contribution of the present work is the
stability analysis for these algorithms based on new uncertainty
principles, and their application to data completion. 
For alternate approaches to far field splitting that however, so far,
lack a rigorous stability analysis we refer to
\cite{HasLiuPot07,PotFazNel10} (see also \cite{GroKraNatAss15} for a
method to separate time-dependent wave fields due to multiple
sources).

This paper is organized as follows.
In the next section we provide the theoretical background for the
direct and inverse source problem for the two-dimensional Helmholtz
equation with compactly supported sources.
In section~\ref{sec:RegPicard} we discuss the singular value
decomposition of the restricted far field operator mapping sources
supported in a ball to their radiated far fields, and we formulate the 
regularized Picard criterion to characterize non-evanescent far
fields. 
In section~\ref{sec:Uncertainty} we discuss uncertainty principles
for the far field translation operator and for the Fourier expansion
of far fields, and in section~\ref{sec:L2Corollaries} we utilize those
to analyze the stability of least squares algorithms for far field
splitting and data completion.
Section~\ref{sec:L1Corollaries} focuses on corresponding results for
$\l1$ algorithms. 
Consequences of these stability estimates related to conditioning and
resolution of reconstruction algorithms for inverse source and inverse
scattering problems are considered in section~\ref{sec:Conditioning},
and in section~\ref{sec:AnalyticExample}--\ref{sec:NumericalExamples}
we provide some analytic and numerical examples.

\section{Far fields radiated by compactly supported sources}
\label{sec:Fourier}
Suppose that $f \in L^2_0(\R^2)$ represents a compactly
supported acoustic or electromagnetic source in the plane.
Then the time-harmonic wave $v \in H^1_\loc(\R^2)$
radiated by $f$ at \emph{wave number} $k>0$ solves
the \emph{source problem} for the Helmholtz equation
\begin{equation*}
  - \Delta v -k^{2} v
  \,=\, k^{2}g \qquad \text{in $\R^2$}\,,
\end{equation*}
and satisfies the \emph{Sommerfeld radiation condition}
\begin{equation*}
  \lim_{r\to\infty} \sqrt{r}\Bigl(\frac{\partial v}{\partial r} 
  - \rmi k v \Bigr) 
  \,=\, 0 \,, \qquad r=|x| \,.
\end{equation*}
We include the extra factor of  $k^{2}$ on the right hand side so that
both  $v$ and  $g$ scale (under dilations) as functions; i.e., if
$u(x)=v(kx)$ and  $f(x)=g(kx)$, then 
\begin{equation}
  \label{eq:SourceProblem}
  - \Delta u - u
  \,=\, f \qquad \text{in $\R^2$} 
  \qquad \text{and} \qquad 
  \lim_{r\to\infty} \sqrt{r}\Bigl(\frac{\partial u}{\partial r} 
  - \rmi u \Bigr) 
  \,=\, 0 \,.
\end{equation}
With this scaling, distances are measured in wavelengths\footnote{One
  unit represents $2\pi$ wavelengths.}, and this allows us to set
$k=1$ in our calculations, and then easily restore the dependence on
wavelength when we are done.

The \emph{fundamental solution} of the Helmholtz equation (with
$k=1$)  in two dimensions is 
\begin{equation*}
  \Phi(x) \,:=\, \frac \rmi{4} H^{(1)}_0(|x|) \,, \qquad 
  x\in\R^2\setminus\{0\}\,,
\end{equation*}
so the solution to \eqref{eq:SourceProblem} can be written as a volume
potential 
\begin{equation*}
  u(x) \,=\, \int_{\R^2} \Phi(x-y) f(y) \dy  \,, \qquad 
  x\in\R^2\,.
\end{equation*}
The asymptotics of the Hankel function tell us that 
\begin{equation*}
  u(x) 
  \,=\, \frac{e^{\frac{\rmi\pi}4}}{\sqrt{8\pi}}
  \frac{e^{\rmi r}}{\sqrt{r}} \alpha(\theta_x) 
  + O\left(r^{-\frac32}\right) 
  \qquad \text{as $r\to\infty$\,,}
\end{equation*}
where  $x=r\theta_x$ with  $\theta_x\in S^1$, and 
\begin{equation}
  \label{eq:Farfield} 
  \alpha(\theta_x) 
  \,=\, \int_{\R^2} e^{-\rmi\theta_x\cdot y} f(y) \dy \,.
\end{equation}
The function $\alpha$ is called the \emph{far field} radiated by the
source $f$, and equation \eqref{eq:Farfield} shows that the 
\emph{far field operator} $\Fcal$, which maps  $f$ to $\alpha$ is a
\emph{restricted Fourier transform}, i.e. 
\begin{equation}
  \label{eq:DefFarfieldOperator}
  \Fcal:\, L^2_0(\R^2) \to L^2(S^1) \,, \quad 
  \Fcal f \,:=\, \wh{f}\big|_{S^1} \,.
\end{equation}

The goal of the inverse source problem is to deduce properties of an
unknown source $f \in L^2_0(\R^2)$ from observations of the  far
field.  
Clearly,  any compactly supported source with Fourier transform that
vanishes on the unit circle is in the nullspace  $\Ncal(\Fcal)$ of the
far field operator. 
We call $f\in \Ncal(\Fcal)$ a \emph{non-radiating source} because a
corollary of Rellich's lemma and unique continuation is that, if the
far field vanishes, then the wave $u$ vanishes on the unbounded
connected component of the complement of the support of  $f$. 
The nullspace of  $\Fcal$ is exactly
\begin{equation*}
  \Ncal(\Fcal) \,=\, \{ g = -\Delta v - v \;|\; v
  \in H^2_0(\R^2) \} \,.
\end{equation*}

Neither the source $f$ nor its support is uniquely determined by the
far field, and, as  non-radiating sources can have arbitrarily large
supports, no upper bound on the support is possible.  
There are, however, well defined notions of lower bounds. 
We say that a compact set $\Omega \tm \R^2$ \emph{carries}
$\alpha$, if every open neighborhood of $\Omega$ supports a source $f
\in L^2_0(\R^2)$ that radiates $\alpha$. 
The \emph{convex scattering support} $\Ccal(\alpha)$ of $\alpha$, as
defined in \cite{KusSyl03} (see also \cite{KusSyl05,Syl06}), is the
intersection of all compact convex sets that carry $\alpha$. 
The set $\Ccal(\alpha)$ itself carries $\alpha$, so that
$\Ccal(\alpha)$ is the smallest convex set which carries the far field
$\alpha$, and  the convex hull of the support of the ``true'' source
$f$ must contain $\Ccal(\alpha)$. 
Because two disjoint compact sets with connected complements cannot
carry the same far field pattern (cf.~\cite[lemma~6]{Syl06}), it
follows that $\Ccal(\alpha)$ intersects any connected component of
$\supp(f)$, as long as the corresponding source component is not
non-radiating. 

In \cite{Syl06}, an analogous notion, the \emph{UWSCS support}, was
defined, showing that any far field with a compactly supported source
is carried by  a smallest union of well-separated convex sets
(well-separated means that the distance between any two connected
convex components is strictly greater than the diameter of any
component). 
A corollary is that it makes theoretical sense to look for the support
of a source with components that are small compared to the distance
between them. 

Here, as in previous investigations \cite{GriHanSyl14,GriSyl16}, we
study the well-posedness issues surrounding numerical algorithms to
compute that support.

\section{A regularized Picard criterion}
\label{sec:RegPicard}
If we consider the restriction of the source to far field map  $\Fcal$
from \eqref{eq:DefFarfieldOperator} to sources supported in the ball
$B_R(0)$ of radius  $R$ centered at the origin, i.e.,
\begin{equation}
  \label{eq:DefRestFarfieldOperator}
  \Fcal_{B_R(0)}:\, L^2(B_R(0)) \to L^2(S^1) \,, \quad 
  \Fcal_{B_R(0)} f \,:=\, \wh{f}\big|_{S^1} \,,
\end{equation}
we can write out a full singular value decomposition. 
We decompose  $f\in\L2(B_R(0))$ as  
\begin{equation*}
  f(x) \,=\,
  \biggl( \sum_{n=-\infty}^{\infty} 
  f_n\, \rmi^n J_n(|x|) e^{\rmi n \varphi_x} \biggr)
  \;\oplus f_\nr(x) \,, \qquad 
  x = |x| (\cos\varphi_x,\sin\varphi_x) \in B_R(0) \,,
\end{equation*}
where $\rmi^n J_n(|x|) e^{\rmi n \varphi_x}$, $n\in\Z$, span the
closed subspace of \textit{free sources}, which satisfy 
\begin{equation*}
  -\Delta u - u \,=\, 0\qquad \mathrm{in\ } B_{R}(0) \,,
\end{equation*}
and $f_\nr$ belongs to the orthogonal complement of that
subspace; i.e., $f_\nr$ is a non-radiating
source.\footnote{Throughout, we identify $f \in L^2(B_R(0))$ with its
  continuation to $\R^2$ by zero whenever appropriate.}
The restricted far field operator $\Fcal_{B_{R}(0)}$ maps
\begin{equation}
  \label{eq:SVD}
  \Fcal_{B_{R(0)}}:\, 
  \rmi^n J_n(|x|) e^{\rmi n \varphi_x} 
  \mapsto s_{n}^{2}(R) e^{\rmi n \theta} \,,
\end{equation}
where 
\begin{equation}
  \label{eq:Defsn}
  s_{n}^{2}(R) 
  \,=\, 2\pi \int_{0}^{R}J_n^2(r) r \dr \,.
\end{equation}

Denoting the Fourier coefficients of a far field $\alpha \in L^2(S^1)$ by
\begin{equation}
  \label{eq:DefFourierCoeffs}
  \alpha_{n} \,:=\,
  \frac{1}{\sqrt{2\pi}}
  \int_{S^1} \alpha(\theta) e^{\rmi n \theta} \dtheta \,, 
  \qquad n\in\Z \,,
\end{equation}
so that
\begin{equation*}
  \alpha(\theta) \,=\,  \sum_{n=-\infty}^\infty \alpha_{n}\frac{e^{\rmi
      n\theta}}{\sqrt{2\pi}} \,, \qquad \theta \in S^1 \,,
\end{equation*}
and
\begin{equation}
  \label{eq:Plancherel}
  \|\alpha\|_{\L2(S^1)}^{2} 
  \,=\, \sum_{n=-\infty}^\infty |\alpha_{n}|^{2} 
\end{equation}
by Parseval's identity, an immediate consequence of \eqref{eq:SVD} is
that 
\begin{equation}
  \label{eq:falpha*}
  f_\alpha^*(x) \,=\, \frac1{\sqrt{2\pi}} \sum_{n=-\infty}^{\infty} 
  \frac{\alpha_{n}}{s_{n}(R)^{2}} \, \rmi^n J_{n}(|x|) 
  e^{\rmi n \varphi_x} \,, \qquad x \in B_R(0) \,,
\end{equation}
which has $\L2$-norm
\begin{equation*}
  \|f_\alpha^*\|_{L^2(B_R(0))}^2 
  \,=\,  \frac1{2\pi} \sum_{n=-\infty}^{\infty} 
  \frac{|\alpha_{n}|^2}{s_{n}^2(R)} \,,
\end{equation*}
is the source with smallest  \L2-norm  that is supported in
$B_{R}(0)$ and radiates the far field $\alpha$. 
We refer to  $f_\alpha^*$ as the \textit{minimal power source}
because, in electromagnetic applications,  $f_\alpha^*$ is
proportional to current density, so that, in a system with a constant
internal resistance, $\|f_\alpha^*\|_{L^2(B_R(0))}^2$ is proportional
to the input power required to radiate a far field. 
Similarly,  $\|\alpha\|^2_{L^2(S^1)}$ measures the radiated
power of the far field. 

The squared singular values $\{s_n^2(R)\}$ of the restricted Fourier
transform $\Fcal_{B_R(0)}$ have a number of interesting properties
with immediate consequences for the inverse source problem;
full proofs of the results discussed in the following can be found in
appendix~\ref{app:sn2}.
The squared singular values satisfy
\begin{equation}
  \label{eq:Sumsn}
  \sum_{n=-\infty}^\infty s_n^2(R)
  \,=\, \pi  R^2 \,,
\end{equation}
and $s_n^2(R)$ decays rapidly as a function of $n$ as soon as 
$|n| \geq R$, 
\begin{equation}
  \label{eq:Decaysn2}
  s_n^2(R) 
  \,\leq\, \frac{\pi
    2^{\frac23}n^{\frac23}}{3^{\frac43}\bigl(\Gamma(\frac23)\bigr)^2}
  \Bigl( \frac{n+\frac12}{n} \Bigr)^{n+1}
  \Bigl( \frac{R^2}{n^2} e^{1-\frac{R^2}{n^2}} \Bigr)^n 
  \frac{R^2}{n^2} \qquad \text{if } |n| \geq R \,.
\end{equation}
Moreover, the odd and even squared singular values, $s_n^2(R)$, are
decreasing (increasing) as functions of $n\geq0$ ($n\leq0$), and
asymptotically
\begin{equation}
  \label{eq:Qualitativesn2}
  \lim_{R\rightarrow\infty} \frac{s_{\lceil\nu R\rceil}^{2}(R)}{2R}
  \,=\,
  \begin{cases}
    \sqrt{1-\nu^{2}} & \nu \le 1 \,,\\
    0 & \nu \ge 1 \,,
  \end{cases}
\end{equation}
where $\lceil \nu R\rceil$ denotes the smallest integer that is
greater than or equal to $\nu R$. 
This can also be seen in figure~\ref{fig:sn2}, where we include plots
of $s_n^2(R)$ (solid line) together with plots of the asymptote
$2\sqrt{R^2-n^2}$ (dashed line) for $R=10$ (left) and $R=100$ (right).
\begin{figure}
  \centering
  \includegraphics[height=5.5cm]{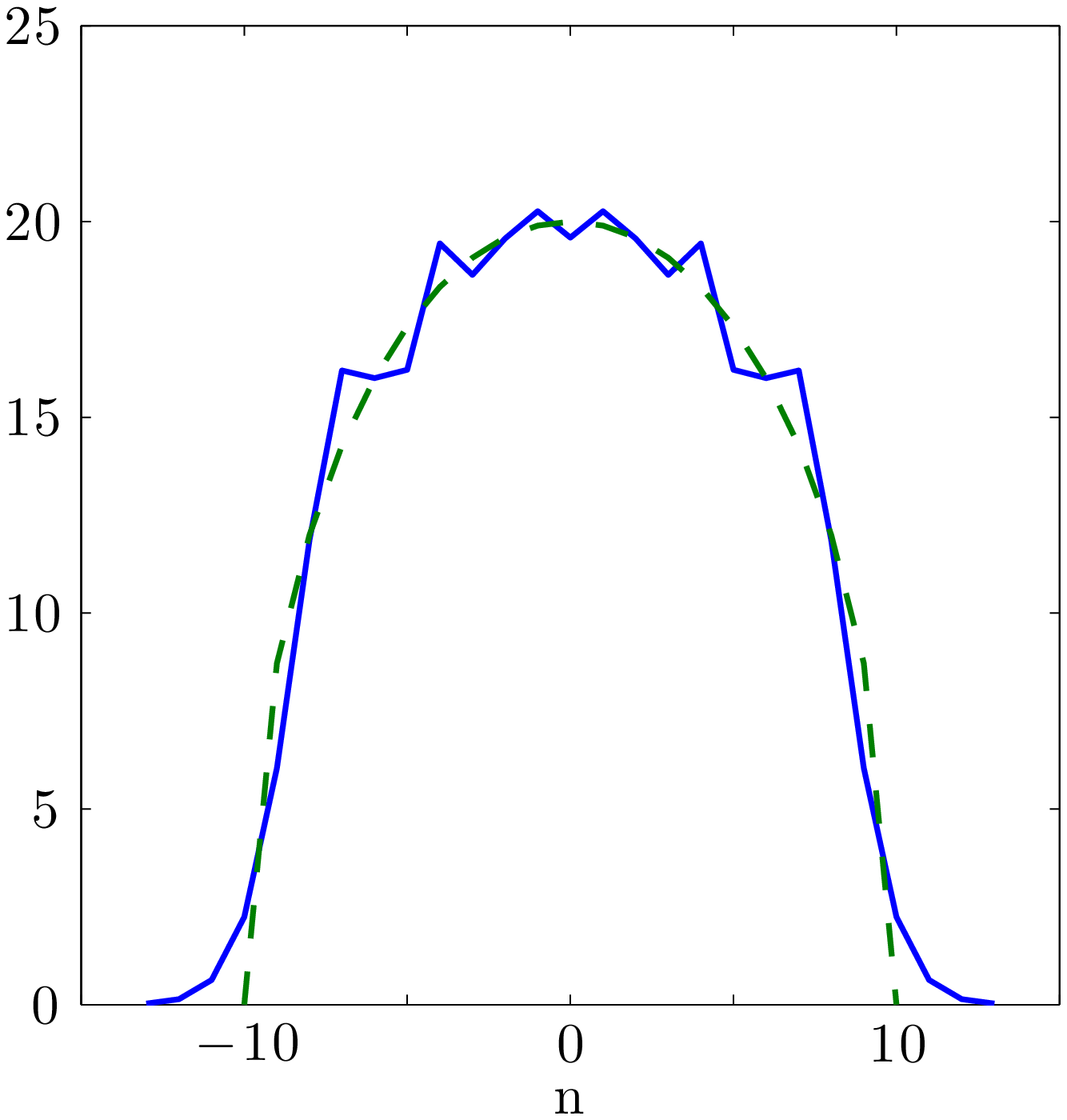}
  \qquad
  \includegraphics[height=5.5cm]{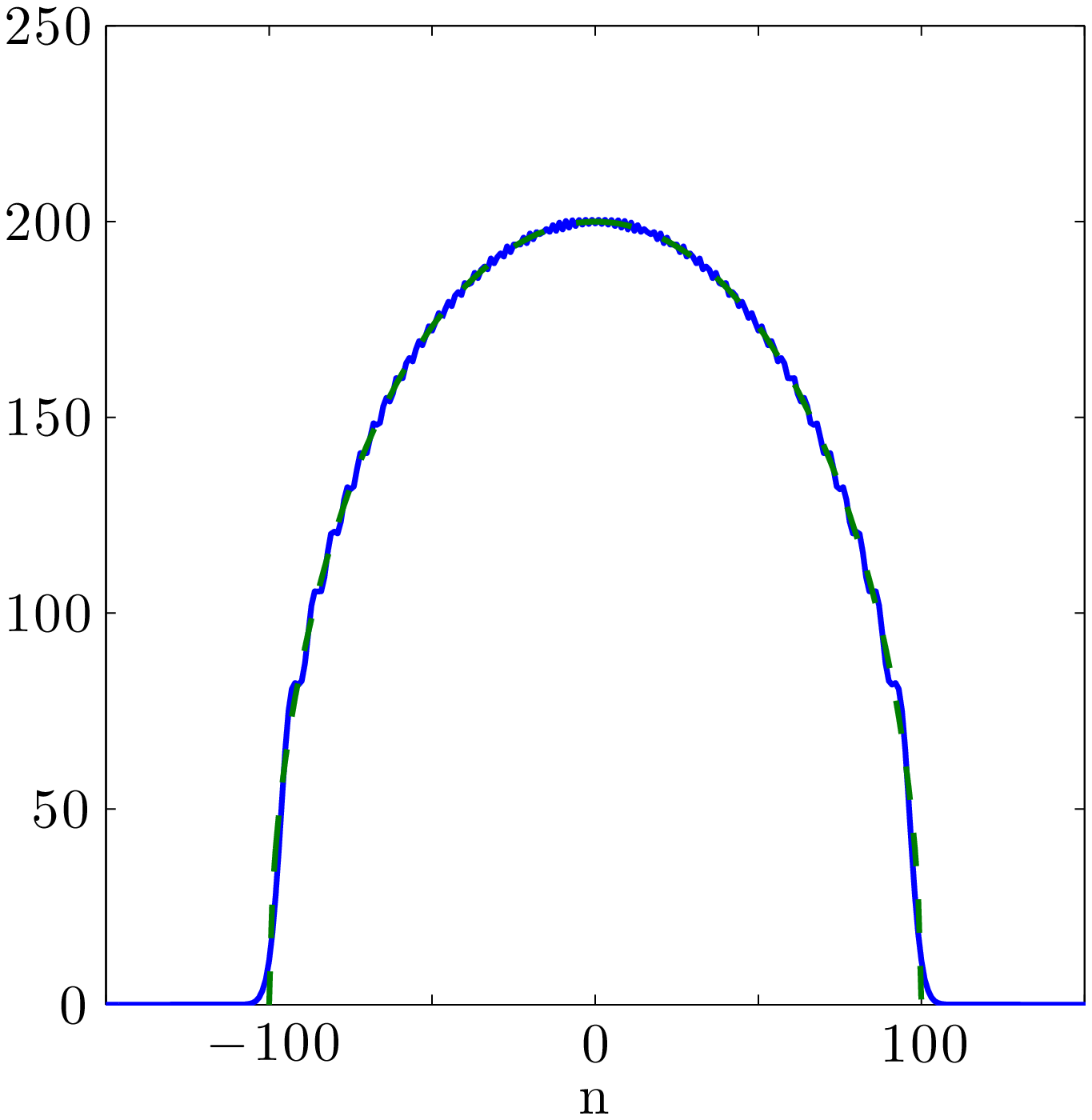}
  \caption{\small Squared singular values $s_n^2(R)$ (solid
    line) and asymptote $2\sqrt{R^2-n^2}$ (dashed line) for
    $R=10$ (left) and $R=100$ (right).}
  \label{fig:sn2}
\end{figure}
The asymptotic regime in \eqref{eq:Qualitativesn2} is already reached 
for moderate values of $R$. 

The forgoing yields a very explicit understanding of the restricted
Fourier transform $\Fcal_{B_R(0)}$.
For $|n|\lesssim R$ the singular values $s_n(R)$ are uniformly large,
while for $|n|\gtrsim R$ the $s_n(R)$ are close to zero, and it is
seen from \eqref{eq:Sumsn}--\eqref{eq:Qualitativesn2} as well as from
figure~\ref{fig:sn2} that as $R$ gets large the width of the
$n$-interval in which $s_n(R)$ falls from uniformly large to zero
decreases.
Similar properties are known for the singular values of more
classical restricted Fourier transforms (see \cite{Sle83}).

A physical source  has \emph{limited power}, which we denote by $P>0$,
and a receiver has a \emph{power threshold}, which we denote by $p>0$. 
If the radiated far field has power less than  $p$, the receiver 
cannot detect it. 
Because $s_{-n}^2(R)=s_n^2(R)$ and the odd and even squared singular
values, $s_{n}^{2}(R)$, are decreasing as functions of $n\geq0$, we
may define: 
\begin{equation}
  \label{eq:DefNrpp}
  N(R,P,p) \,:=\, \sup_{s_{n}^{2}(R)\ge{2\pi}\frac{p}{P}}n \,.
\end{equation}
So, if $\alpha \in L^2(S^1)$ is a far field radiated by a limited
power source supported in $B_R(0)$ with
${\|f_\alpha^*\|_{L^2(B_R(0))}^2 \leq P}$, then, for $N = N(R,P,p)$
\begin{equation*}
  P
  \,\geq\, \frac1{2\pi} 
  \sum_{|n|>N} \frac{|\alpha_{n}|^2}{s_{n}^2(R)} 
  \,\geq\, \frac1{2\pi} \frac{1}{s_{N+1}^2(R)} 
  \sum_{|n|>N} |\alpha_{n}|^2
  \,>\, \frac{P}{p} \sum_{|n|>N} |\alpha_{n}|^2 \,.
\end{equation*}
Accordingly, $\sum_{|n|\geq N} |\alpha_{n}|^2 < p$ is below the
power threshold.
So the subspace of detectable far fields, that can be radiated by a
power limited source supported in $B_{R}(0)$ is:
\begin{equation*}
  V_\NE \,:=\, \Bigl\{ \alpha \in L^2(S^1) \;\Big|\; 
  \alpha(\theta) = \sum_{n=-N}^{N} \alpha_{n}e^{in\theta} \Bigr\} \,. 
\end{equation*}

We refer to $V_\NE$ as the subspace of \textit{non-evanescent far
  fields}, and to the orthogonal projection of a far field onto this subspace as
the \textit{non-evanescent} part of the far field.  
We use the term \textit{non-evanescent} because it is the phenomenon
of evanescence that explains why the the singular values
$s_{n}^{2}(R)$ decrease rapidly for $|n|\gtrsim R$, resulting in the fact that,
for a wide range of $p$ and $P$, $R<N(R,p,P)<1.5R$, if $R$ is
sufficiently large.
\begin{figure}
  \centering
  \includegraphics[height=5.5cm]{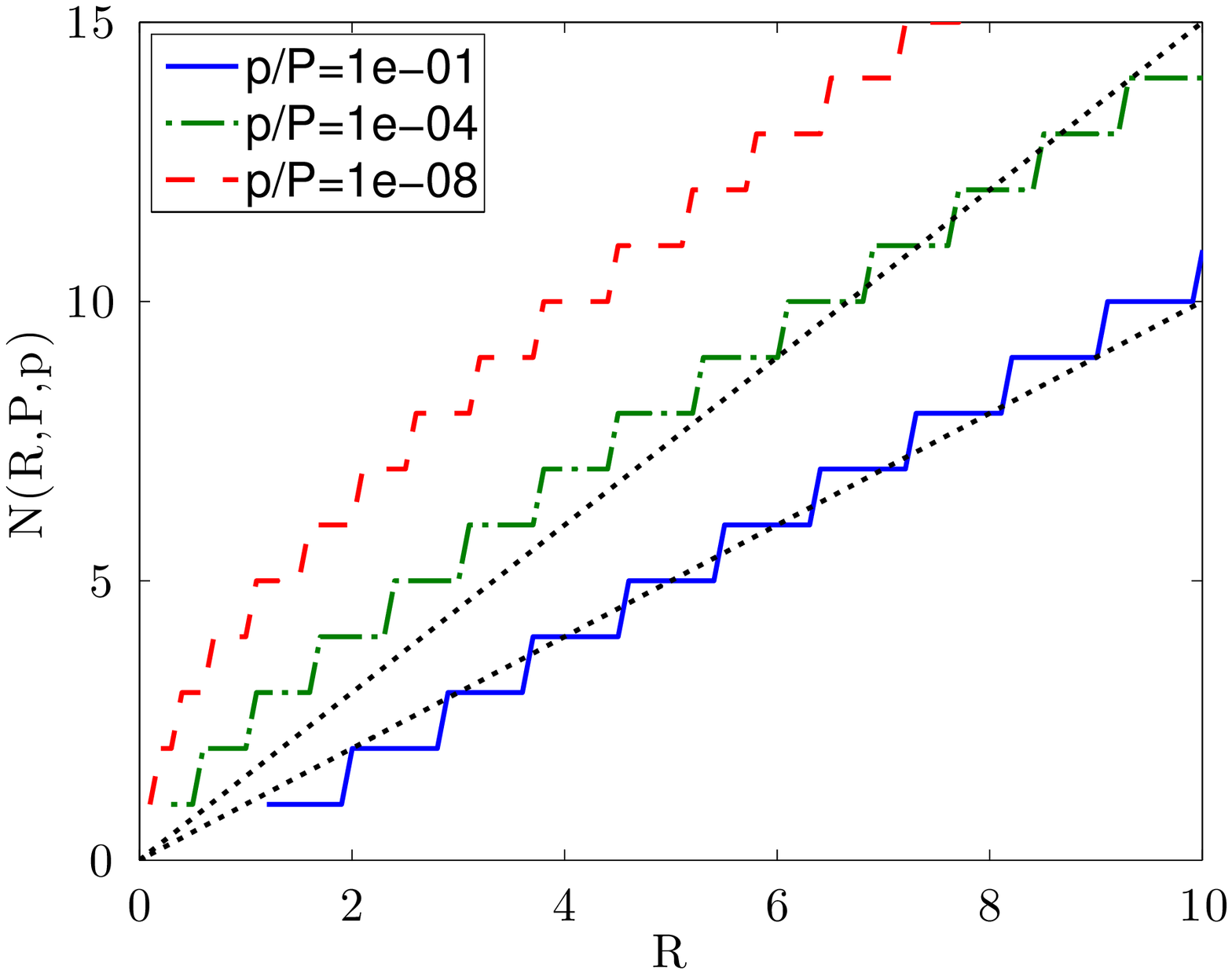}
  \qquad \quad
  \includegraphics[height=5.5cm]{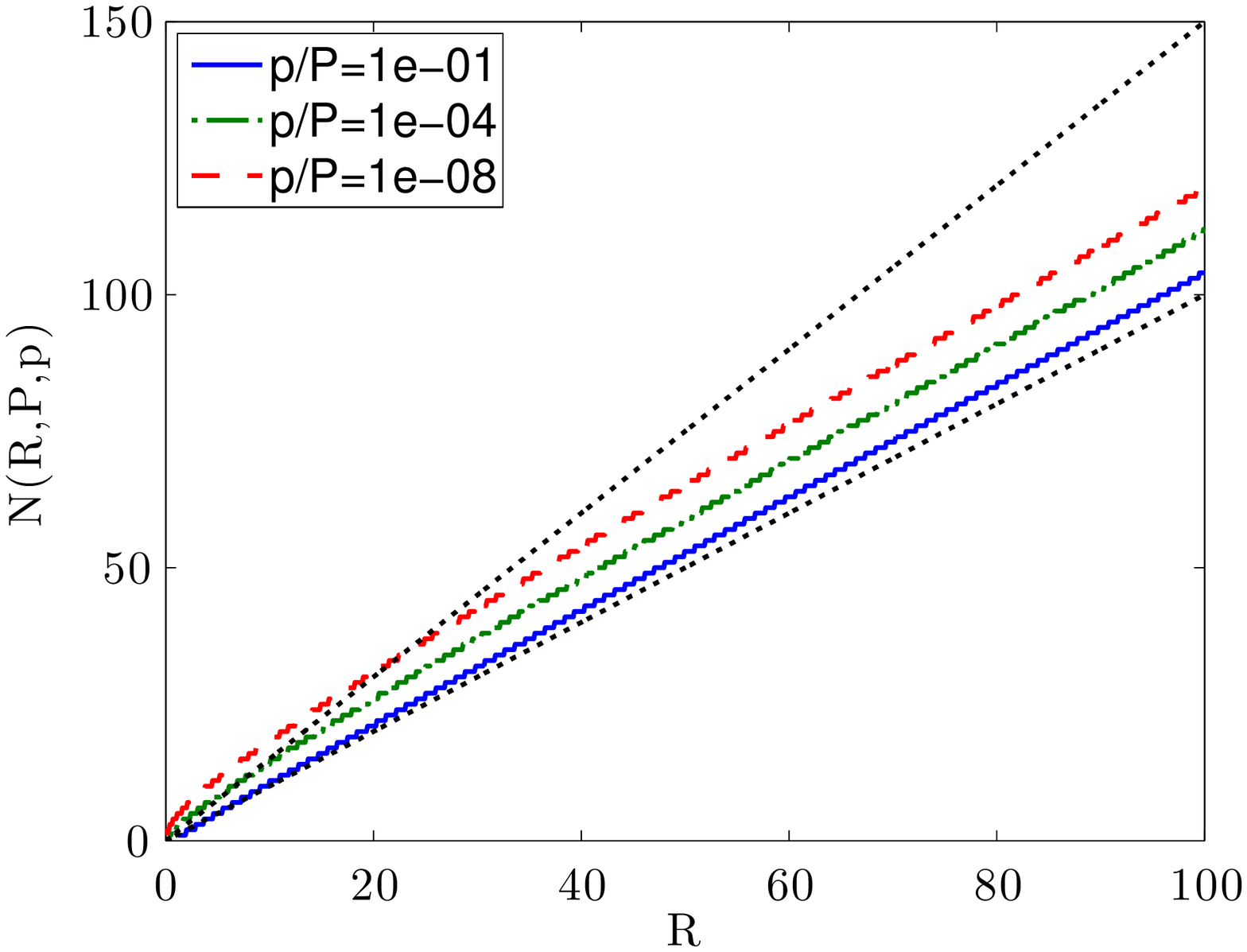}
  \caption{\small Threshold $N(R,P,p)$ as function of $R$ for
    different values of $p/P$. Dotted lines correspond to $g_1(R)=R$
    and $g_{1.5}(R)=1.5R$.}
  \label{fig:N-RPp}
\end{figure}
This is also illustrated in figure~\ref{fig:N-RPp}, where we include
plots of $N(R,P,p)$ from \eqref{eq:DefNrpp} for $p/P=10^{-1}$,
$p/P=10^{-4}$, and $p/P = 10^{-8}$ and for varying $R$.
The dotted lines in these plots correspond to $g_1(R)=R$ and
$g_{1.5}(R)=1.5R$, respectively.

\section{Uncertainty principles for far field translation}
\label{sec:Uncertainty}
In the inverse source problem, we seek to recover information about
the size and location of the support of a source from observations of
its far field. 
Because the far field is a restricted Fourier transform, the formula 
for the Fourier transform of the translation of a function:
\begin{equation*}
  \wh{f(\cdot+c)} (\theta) 
  \,=\, e^{\rmi c\cdot\theta}\wh{f}(\theta) \,,
  \qquad \theta \in S^1 \,,\; c\in\R^2 \,,
\end{equation*}
plays an important role. 
We use  $T_c$ to denote the map from  $\Lt(S^1)$ to itself given by 
\begin{equation}
  \label{eq:DefTc}
  \Tc: \alpha \mapsto  e^{\rmi c\cdot\theta}\alpha \,.
\end{equation}
The mapping  $\Tc$ acts on the Fourier coefficients $\{\alpha_{n}\}$ of
$\alpha$ as a convolution operator, i.e., the Fourier coefficients
$\{\alpha_{m}^{c}\}$ of $\Tc\alpha$ satisfy 
\begin{equation}
  \label{eq:TcConvolution}
  \alpha^{c}_{m} 
  \,=\,
  \sum_{n=-\infty}^{\infty} \alpha_{m-n} 
  \bigl( \rmi^{n} J_{n}(|c|) e^{\rmi n \varphi_c} \bigr) \,,
  \qquad m\in\Z \,,
\end{equation}
where  $|c|$ and  $\varphi_c$ are the polar coordinates of $c$.
Employing a slight abuse of notation, we also use $\Tc$ to denote the
corresponding operator from $\lt$ to itself that maps 
\begin{equation}
  \label{eq:DefTcAbuse}
  \Tc: \{\alpha_n\} \mapsto \{\alpha^{c}_{m}\} \,.
\end{equation}
Note that $\Tc$ is a unitary operator on \l2, i.e.\ $T_c^* = T_{-c}$.

The following theorem, which we call an \textit{uncertainty principle 
  for the translation operator}, will be the main 
ingredient in our analysis of far field splitting.

\begin{theorem}[Uncertainty principle for far field translation]
  \label{th:Uncertainty1}
  Let $\alpha,\beta\in\L2(S^1)$ such that the corresponding Fourier
  coefficients $\{\alpha_n\}$ and $\{\beta_n\}$ satisfy
  $\supp\{\alpha_{n}\}\tm W_1$ and $\supp\{\beta_{n}\}\tm W_2$
  with $W_1,W_2 \tm \Z$, and let $c\in\R^2$.
  Then, 
  \begin{equation*}
    |\langle\alpha,\Tc\beta\rangle_{L^2(S^1)}| 
    \,\le\, \frac{\sqrt{|W_1||W_2|}}{|c|^{1/3}} \|\alpha\|_{L^2(S^1)}
    \|\beta\|_{L^2(S^1)} \,.
  \end{equation*}
\end{theorem}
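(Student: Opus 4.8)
The plan is to reduce everything to the convolution description \eqref{eq:TcConvolution} of $\Tc$ on Fourier coefficients, together with a single uniform pointwise bound on Bessel functions. Since the Fourier coefficients realize an isometry between $L^2(S^1)$ and $\lt$ (Parseval, \eqref{eq:Plancherel}), I would first rewrite the inner product on the coefficient side, using \eqref{eq:TcConvolution} for the coefficients $\beta_m^c$ of $\Tc\beta$:
\begin{equation*}
  \langle\alpha,\Tc\beta\rangle_{L^2(S^1)}
  \,=\, \sum_{m} \alpha_m \overline{\beta_m^c}
  \,=\, \sum_{m} \alpha_m \sum_{n} \overline{\beta_{m-n}}\,
  \overline{\rmi^n J_n(|c|) e^{\rmi n\varphi_c}} \,.
\end{equation*}
Because $\supp\{\alpha_n\}\tm W_1$ and $\supp\{\beta_n\}\tm W_2$, only terms with $m\in W_1$ and $m-n\in W_2$ survive. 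Taking absolute values and using that $J_n(|c|)$ is real then gives
\begin{equation*}
  |\langle\alpha,\Tc\beta\rangle_{L^2(S^1)}|
  \,\le\, \sum_{m\in W_1} \sum_{n:\,m-n\in W_2}
  |\alpha_m|\,|\beta_{m-n}|\,|J_n(|c|)| \,.
\end{equation*}

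The key analytic ingredient is the uniform bound $|J_n(t)|\le t^{-1/3}$, valid for all $n\in\Z$ and $t>0$, which accounts for both the exponent and the constant in the theorem. Inserting it pulls the factor $|c|^{-1/3}$ out of the double sum. What remains factorizes cleanly: reindexing $l=m-n$, for each fixed $m\in W_1$ the inner sum runs exactly over $l\in W_2$, independently of $m$, so
\begin{equation*}
  \sum_{m\in W_1}\sum_{n:\,m-n\in W_2}|\alpha_m|\,|\beta_{m-n}|
  \,=\, \Bigl(\sum_{m\in W_1}|\alpha_m|\Bigr)
  \Bigl(\sum_{l\in W_2}|\beta_l|\Bigr) \,.
\end{equation*}
Two applications of the Cauchy--Schwarz inequality, namely $\sum_{m\in W_1}|\alpha_m|\le\sqrt{|W_1|}\,\bigl(\sum_{m\in W_1}|\alpha_m|^2\bigr)^{1/2}\le\sqrt{|W_1|}\,\|\alpha\|_{L^2(S^1)}$ (the last step by Parseval \eqref{eq:Plancherel}) and likewise for $\beta$ on $W_2$, then deliver the asserted bound $\frac{\sqrt{|W_1||W_2|}}{|c|^{1/3}}\,\|\alpha\|_{L^2(S^1)}\|\beta\|_{L^2(S^1)}$.

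The main obstacle is the uniform Bessel bound itself; the algebraic steps above are routine. I would isolate it as a separate lemma. For $|n|\ge t$ the Bessel function is in its pre-turning-point, exponentially small regime (cf.\ the estimate \eqref{eq:Decaysn2} already used for $s_n^2$), so the estimate is easy there; the delicate range is $|n|\lesssim t$, near and beyond the turning point $t=|n|$, where the sharp decay rate $t^{-1/3}$ originates from the Airy-type behaviour of $J_n$ at the turning point. I would obtain the bound either from Landau's monotonicity estimates for $J_\nu$ or from Olver's uniform asymptotic expansion with explicit remainder.

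It is worth noting that the constant $1$ in the statement is admissible precisely because the true supremum $\sup_{n\in\Z,\,t>0} t^{1/3}|J_n(t)|$ is strictly below $1$; it is attained at $n=0$ near $t\approx0.8$ and equals roughly $0.79$. The resulting inequality is the exact analogue of the classical time--bandwidth bound \eqref{eq:intro1}, with $\sqrt{|W_1||W_2|}$ replacing $\sqrt{|T||W|}$ and the additional translation-induced decay factor $|c|^{-1/3}$ reflecting the spreading of energy under the far field translation operator $\Tc$.
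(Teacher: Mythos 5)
Your proposal is correct and follows essentially the same route as the paper: the paper's proof works on the Fourier-coefficient side via H\"older's inequality and the bound $\|T_c\|_{\l1,\l\infty}\le |c|^{-1/3}$ (lemma~\ref{lmm:TcEstimate}), which is exactly your convolution expansion combined with Landau's uniform Bessel estimate $|J_n(|c|)|\le b\,|c|^{-1/3}$, $b<1$, followed by the same Cauchy--Schwarz step $\|\alpha\|_{\l1}\le\sqrt{|W_1|}\,\|\alpha\|_2$. The only difference is presentational: the paper packages the double-sum computation as an operator-norm lemma, while you write it out explicitly.
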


We will frequently be discussing properties of a far field $\alpha$
and those of its Fourier coefficients. 
The following notation will be a useful shorthand:
\begin{align}
  \label{eq:LpNorm}
  \|\alpha\|_{L^{p}} 
  &\,=\, \Bigl( \int_{S^1} |\alpha(\theta)|^{p} \dtheta \Bigr)^{1/p} \,,
  && 1\leq p\leq \infty\,,\\
  \label{eq:lpNorm}
  \|\alpha\|_{l^{p}} 
  &\,=\, \Bigl( \sum_{n=-\infty}^{\infty}|\alpha_{n}|^{p} \Bigr)^{1/p} \,,
  && 1\leq p\leq \infty\,.
\end{align}
The notation emphasizes that we treat the representation of the
function  $\alpha$ by its values, or by the sequence of its Fourier
coefficients as simply a way of inducing different norms. 
That is, both \eqref{eq:LpNorm} and \eqref{eq:lpNorm} describe
different norms of the same function on $S^{1}$. 
Note that, because of the Plancherel equality \eqref{eq:Plancherel},
$\|\alpha\|_{\Lt} = \|\alpha\|_{\lt}$, so we may just write
$\|\alpha\|_2$, and we write $\langle\cdot,\cdot\rangle$ for the
corresponding inner product.

\begin{remark}
  We will extend the notation a little more and refer to the
  support of $\alpha$ in $S^{1}$ as its $\L0$-support and denote by
  $\|\alpha\|_{\L0}$ the measure of $\supp(\alpha)\tm S^1$.
  We will call the indices of the nonzero Fourier coefficients in its
  Fourier series expansion the $\l0$-support of $\alpha$, and use
  $\|\alpha\|_{l^{0}}$ to denote the number of non-zero coefficients. 
  \hfill$\lozenge$
\end{remark}

With this notation, theorem~\ref{th:Uncertainty1} becomes

\begin{theorem}[Uncertainty principle for far field translation]
  \label{th:Uncertainty2}
  Let $\alpha, \beta \in \L2(S^1)$, and let $c\in\R^2$.
  Then,
  \begin{equation}
    \label{eq:Uncertainty2}
    |\langle\alpha,\Tc\beta\rangle|
    \,\le\,
    \frac{\sqrt{\|\alpha\|_{l_{0}}\|\beta\|_{l_{0}}}}{|c|^{1/3}}
    \|\alpha\|_2 \|\beta\|_2 \,.
  \end{equation}
\end{theorem}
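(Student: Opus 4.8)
The plan is to first observe that Theorem~\ref{th:Uncertainty2} is only a repackaging of Theorem~\ref{th:Uncertainty1}: choosing $W_1 := \supp\{\alpha_n\}$ and $W_2 := \supp\{\beta_n\}$ to be the exact $l^0$-supports gives $|W_1| = \|\alpha\|_{l^0}$ and $|W_2| = \|\beta\|_{l^0}$, so \eqref{eq:Uncertainty2} is literally the inequality of Theorem~\ref{th:Uncertainty1}. Since I may invoke Theorem~\ref{th:Uncertainty1}, this reduction is immediate, and all the content sits in that bound; I therefore sketch how I would establish it.

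To prove the bound I would pass to Fourier coefficients, which is harmless because $\|\cdot\|_{L^2(S^1)} = \|\cdot\|_{l^2}$ by \eqref{eq:Plancherel}. Applying the convolution representation \eqref{eq:TcConvolution} of $\Tc$ and reindexing the sum by $k = m-n$, the pairing collapses to the finite bilinear form
\[
\langle \alpha, \Tc\beta\rangle = \sum_{m\in W_1}\sum_{k\in W_2} \alpha_m \,\overline{\beta_k}\; \overline{\rmi^{\,m-k} J_{m-k}(|c|)\, e^{\rmi(m-k)\varphi_c}},
\]
the restriction to $W_1\times W_2$ being automatic since $\alpha_m$ vanishes off $W_1$ and $\beta_k$ off $W_2$. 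This exhibits $\langle\alpha,\Tc\beta\rangle$ as a form with an $|W_1|\times|W_2|$ matrix $M$ whose entries satisfy $|M_{m,k}| = |J_{m-k}(|c|)|$, so that $|\langle\alpha,\Tc\beta\rangle| \le \|M\|\,\|\alpha\|_2\|\beta\|_2$, where $\|M\|$ denotes the spectral norm.

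It then remains to bound $\|M\|$, and here I would deliberately use the crude Frobenius estimate $\|M\| \le \|M\|_F = \bigl(\sum_{m,k}|J_{m-k}(|c|)|^2\bigr)^{1/2} \le \sqrt{|W_1||W_2|}\,\sup_{n\in\Z}|J_n(|c|)|$, which produces the factor $\sqrt{|W_1||W_2|}$ for free. The entire decay in $|c|$ is then supplied by the uniform Bessel estimate
\[
\sup_{n\in\Z} |J_n(x)| \le x^{-1/3}, \qquad x>0,
\]
after which the stated inequality follows at once. I expect this Bessel estimate to be the main obstacle: it is a Landau-type bound, and its sharp exponent $-1/3$ is dictated by the \emph{slowest}-decaying entries $J_n(|c|)$ with $n\approx|c|$ (the transition, or Airy, regime), which decay only like $|c|^{-1/3}$ rather than the $|c|^{-1/2}$ valid once $n$ is bounded away from $|c|$. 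Proving it with a constant no larger than $1$, so that no extraneous factor spoils the clean form of \eqref{eq:Uncertainty2}, requires uniform asymptotics of Bessel functions across this regime; everything else is bookkeeping.
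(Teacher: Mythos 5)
Your proof is correct and follows essentially the same route as the paper's: both pass to Fourier coefficients via the convolution representation \eqref{eq:TcConvolution} of $T_c$, and both draw the entire $|c|^{-1/3}$ decay from the uniform Landau bound $\sup_{n\in\Z}|J_n(x)|\le b\,x^{-1/3}$ with $b<1$, which the paper likewise cites (from \cite{Lan00}) rather than proves, so correctly flagging this as the sole analytic ingredient is exactly right. The only difference is bookkeeping: the paper produces the factor $\sqrt{\|\alpha\|_{l^0}\|\beta\|_{l^0}}$ via H\"older ($l^1$--$l^\infty$, i.e.\ the bound $\|T_c\|_{l^1,l^\infty}\le |c|^{-1/3}$ of lemma~\ref{lmm:TcEstimate}) followed by Cauchy--Schwarz on the supports, whereas you bound the spectral norm of the coefficient matrix by its Frobenius norm --- the same crude counting in matrix form.
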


We refer to theorem~\ref{th:Uncertainty2} as an uncertainty principle,
because, if we could take  $\beta=T_{c}^*\alpha$ in
\eqref{eq:Uncertainty2}, it would yield 
\begin{equation}
  \label{eq:Uncertainty2a}
  1 \,\le\, \frac{\|\alpha\|_{\l0}
    \|T_{c}^*\alpha\|_{\l0}}{|c|^{2/3}} \,.
\end{equation}
As stated, \eqref{eq:Uncertainty2a} is is true but not useful, 
because $\|\alpha\|_{\l0}$ and $\|T_{c}^*\alpha\|_{\l0}$ cannot
simultaneously be finite.\footnote{This would imply, using
  \eqref{eq:falpha*}, that  $\alpha$ could have been radiated by a source
  supported in an arbitrarily small ball centered at the origin, or 
  centered at  $c$, but Rellich's lemma and unique continuation show
  that no nonzero far field can have two sources with disjoint
  supports.}
We present the corollary only to illustrate the close analogy to the
theorem 1 in \cite{DonSta89}, which treats the discrete Fourier
transform (DFT) on sequences of length $N$:

\begin{theorem}[Uncertainty principle for the Fourier transform
  (Donoho, Stark~\cite{DonSta89})]  
  If $x$ represents the sequence $\{x_{n}\}$ for $n=0,\ldots,N-1$ and
  $\wh{x}$ its DFT, then 
  \begin{equation*}
    1 \,\le\, \frac{\|x\|_{\l0}\|\wh{x}\|_{\l0}}{N} \,.
  \end{equation*}
\end{theorem}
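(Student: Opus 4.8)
The plan is to derive the product bound by chaining four elementary norm inequalities, arranged so that the normalization constants of the DFT collapse to leave precisely the denominator $N$. I fix the conventions $\wh{x}_k=\sum_{n=0}^{N-1}x_n e^{-2\pi\rmi nk/N}$ and, correspondingly, the inversion formula $x_n=\frac1N\sum_{k=0}^{N-1}\wh{x}_k e^{2\pi\rmi nk/N}$, and I write $\|x\|_{l^0}$ and $\|\wh{x}\|_{l^0}$ for the numbers of nonzero entries of $x$ and $\wh{x}$, respectively. Since the asserted inequality is homogeneous and the statement is vacuous when $x=0$, I may assume $x\neq0$, so that $\|x\|_{l^2}>0$ and I am allowed to divide by it at the end.

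First I would bound the sup-norm of $x$ by the $l^1$-norm of its transform using the inversion formula: estimating each $|x_n|\le\frac1N\sum_k|\wh{x}_k|$ gives $\|x\|_{l^\infty}\le\frac1N\|\wh{x}\|_{l^1}$. Next I would pass from the $l^1$- to the $l^2$-norm of $\wh{x}$ by applying Cauchy--Schwarz on its support, $\|\wh{x}\|_{l^1}\le\sqrt{\|\wh{x}\|_{l^0}}\,\|\wh{x}\|_{l^2}$, and then invoke Plancherel in the form $\|\wh{x}\|_{l^2}^2=N\|x\|_{l^2}^2$ dictated by the chosen normalization. Finally, confining $x$ to its own support yields $\|x\|_{l^2}^2\le\|x\|_{l^0}\,\|x\|_{l^\infty}^2$. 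Substituting the three preceding estimates into this last inequality gives
\[
  \|x\|_{l^2}^2
  \,\le\,\|x\|_{l^0}\,\frac{1}{N^2}\,\|\wh{x}\|_{l^1}^2
  \,\le\,\|x\|_{l^0}\,\|\wh{x}\|_{l^0}\,\frac{1}{N^2}\,\|\wh{x}\|_{l^2}^2
  \,=\,\frac{\|x\|_{l^0}\,\|\wh{x}\|_{l^0}}{N}\,\|x\|_{l^2}^2 \,,
\]
and dividing through by $\|x\|_{l^2}^2>0$ produces the claim.

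I do not expect a genuine analytic obstacle here; the whole content is the bookkeeping of normalization constants, and the single point requiring care is to keep the conventions for the forward transform, the inversion formula, and Plancherel mutually consistent, so that the factors of $N$ combine to leave exactly $1/N$ rather than a weaker power. It is worth recording that this multiplicative bound holds for every $N$ and is sharp: picket-fence (Dirac-comb) sequences, i.e.\ indicators of a subgroup of $\Z/N\Z$ of order $d\mid N$, whose transform is the indicator of the dual subgroup of order $N/d$, attain equality, $\|x\|_{l^0}\|\wh{x}\|_{l^0}=d\cdot(N/d)=N$. The stronger \emph{additive} estimate $\|x\|_{l^0}+\|\wh{x}\|_{l^0}\ge N+1$ is valid only for prime $N$ (via Chebotarev's theorem on the nonvanishing of the minors of the DFT matrix) and is not what is needed; the product form above is the exact discrete analogue of Theorem~\ref{th:Uncertainty2}, with $N$ playing the role of $|c|^{2/3}$.
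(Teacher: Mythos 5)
Your proof is correct, but note a point of context: the paper does not prove this statement at all --- it is quoted verbatim from Donoho and Stark \cite{DonSta89} purely to exhibit the analogy with the far field translation principle, so there is no internal proof to compare against. Your argument is the standard elementary one, and it is worth observing that it is structurally identical to the paper's own proof of theorem~\ref{th:Uncertainty2}: your inversion-formula bound $\|x\|_{l^\infty}\le\frac1N\|\widehat{x}\|_{l^1}$ plays exactly the role of the Bessel-function estimate $\|T_c\|_{l^1,l^\infty}\le|c|^{-1/3}$ in lemma~\ref{lmm:TcEstimate}, and the passage $\|\cdot\|_{l^1}\le\sqrt{\|\cdot\|_{l^0}}\,\|\cdot\|_{l^2}$ via Cauchy--Schwarz on the support is the same device used there; the only cosmetic difference is that you phrase the conclusion for a single sequence $x$ rather than as a bilinear estimate $|\langle\alpha,T_c\beta\rangle|$, which is why you need the extra step $\|x\|_{l^2}^2\le\|x\|_{l^0}\|x\|_{l^\infty}^2$ and the division by $\|x\|_{l^2}^2$. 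Your bookkeeping of the normalization constants (Plancherel as $\|\widehat{x}\|_{l^2}^2=N\|x\|_{l^2}^2$ under the unnormalized forward transform) is consistent and yields exactly $1/N$; and since $l^0$ quantities are scale-invariant, the conclusion is in any case independent of the chosen DFT convention. Two minor remarks: the case $x=0$ is not ``vacuous'' but rather excluded --- the inequality is false for the zero sequence, so the theorem implicitly assumes $x\neq0$, as your division step in fact requires; and your closing observations (sharpness via Dirac combs, the additive Tao-type bound for prime $N$) are accurate but extraneous to the claim.
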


This is a lower bound on the \textit{time-bandwidth product}. 
In \cite{DonSta89} Donoho and Stark present two important corollaries
of uncertainty principles for the Fourier transform. 
One is the uniqueness of sparse representations of a signal $x$ as
a superposition of vectors taken from both the standard basis and
the basis of Fourier modes, and the second is the recovery of this
representation by \l1 minimization. 

The main observation we make here is that, if we phrase
our uncertainty principle as in theorem~\ref{th:Uncertainty2}, then
the far field translation operator, as well as the map from $\alpha$
to its Fourier coefficients, satisfy an uncertainty principle. 
Combining the uncertainty principle with the regularized Picard
criterion from section~\ref{sec:RegPicard} yields analogs of both
results in the context of the 
inverse source problem. 
These include previous results about the splitting of far fields
from \cite{GriHanSyl14} and \cite{GriSyl16}, which can be simplified
and extended by viewing them as consequences of the uncertainty
principle and the regularized Picard criterion.

The proof of theorem~\ref{th:Uncertainty2} is a simple corollary of
the lemma below:

\begin{lemma}
  \label{lmm:TcEstimate}
  Let $c\in\R^2$ and let $T_c$ be the operator introduced in
  \eqref{eq:DefTc} and \eqref{eq:DefTcAbuse}. 
  Then, the operator norm of 
  $\Tc:\, L^{p}(S^{1}) \longrightarrow L^{p}(S^{1})$, 
  $1\leq p\leq \infty$, satisfies
  \begin{equation}
    \label{eq:EstTcLL}
    \|\Tc\|_{\L{p},\L{p}} \,=\, 1 \,,
  \end{equation}
  whereas $\Tc: \l1 \longrightarrow \l\infty$ fulfills
  \begin{equation}
    \label{eq:EstTcll}
    \|\Tc\|_{\l1,\l\infty} \,\le\, \frac{1}{|c|^{\third}} \,.
  \end{equation}
\end{lemma}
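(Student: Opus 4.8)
The plan is to dispatch the two claims separately, since \eqref{eq:EstTcLL} is immediate and essentially all the work lies in \eqref{eq:EstTcll}. For \eqref{eq:EstTcLL}, recall from \eqref{eq:DefTc} that $\Tc$ is multiplication by the unimodular function $\theta\mapsto\rme^{\rmi c\cdot\theta}$, so $|\Tc\alpha(\theta)|=|\alpha(\theta)|$ pointwise. Hence $\|\Tc\alpha\|_{\L p}=\|\alpha\|_{\L p}$ for every $1\le p\le\infty$, which shows $\Tc$ is an isometry; together with $\Tc^{-1}=T_{-c}$ this gives $\|\Tc\|_{\L p,\L p}=1$.

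For \eqref{eq:EstTcll} I would pass to the Fourier side, where by \eqref{eq:TcConvolution} the operator $\Tc$ is convolution with the kernel $k_n:=\rmi^n J_n(|c|)\rme^{\rmi n\varphi_c}$. For a discrete convolution $(\Tc\alpha)_m=\sum_n k_n\alpha_{m-n}$ one has the elementary estimate
\[
  |(\Tc\alpha)_m|\,\le\,\Bigl(\sup_n|k_n|\Bigr)\sum_n|\alpha_{m-n}|\,=\,\|k\|_{\l\infty}\,\|\alpha\|_{\l1},
\]
so that $\|\Tc\|_{\l1,\l\infty}\le\|k\|_{\l\infty}=\sup_{n\in\Z}|J_n(|c|)|$, the factors $\rmi^n$ and $\rme^{\rmi n\varphi_c}$ being unimodular. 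Thus \eqref{eq:EstTcll} reduces to the single uniform Bessel estimate
\[
  \sup_{n\in\Z}|J_n(x)|\,\le\,x^{-1/3},\qquad x>0,
\]
and since $|J_{-n}(x)|=|J_n(x)|$ it suffices to treat $n\ge0$.

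Establishing this Bessel bound is the crux and the main obstacle. The cleanest honest route to the \emph{constant $1$} is to invoke the known sharp uniform bound $\sup_{\nu\ge0}|J_\nu(x)|\le b\,x^{-1/3}$ with $b=\sup_{x>0}x^{1/3}J_0(x)\approx0.786<1$ (Landau), which yields \eqref{eq:EstTcll} with room to spare. If instead one wants a self-contained argument, I would start from $J_n(x)=\frac1{2\pi}\int_{-\pi}^{\pi}\rme^{\rmi(x\sin\theta-n\theta)}\dtheta$ and estimate this oscillatory integral by van der Corput's lemma. Writing $\phi(\theta)=x\sin\theta-n\theta$, one has $\phi'(\theta)=x\cos\theta-n$, $\phi''(\theta)=-x\sin\theta$, and $\phi'''(0)=-x$. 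Away from $\theta=0$ a lower bound on $|\phi''|$ yields a rate of order $x^{-1/2}$; near $\theta=0$, where for $n\approx x$ both $\phi'$ and $\phi''$ vanish while $\phi'''\ne0$, the phase is cubically degenerate and the third-derivative van der Corput estimate supplies the decisive $x^{-1/3}$ factor. The delicate part is to splice these regimes so that the bound is uniform in $n$; this route reproduces the exponent $1/3$ but with a constant worse than $1$, which is precisely why the sharp Landau/Airy bound is the expedient way to close the stated inequality.

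Finally I would emphasize that the exponent $1/3$ is intrinsic: it records the cubic vanishing of the phase exactly when the order equals the argument, i.e.\ at the turning point where the largest Bessel values are attained. This is the source of the $|c|^{-1/3}$ decay in \eqref{eq:EstTcll} and hence of the corresponding factor in Theorem~\ref{th:Uncertainty2}.
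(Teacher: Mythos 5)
Your proof is correct and follows essentially the same route as the paper's: the $L^{p}$ statement is the same unimodular-multiplier observation, and \eqref{eq:EstTcll} is obtained by the same convolution estimate reducing the operator norm to $\sup_{n\in\Z}|J_{n}(|c|)|$, which is then closed by Landau's uniform bound $|J_{n}(x)|\le b\,x^{-1/3}$ with $b<1$. The only discrepancy is the cited constant (the paper quotes $b\approx 0.6749$ from \cite{Lan00}, while your $b\approx 0.786$ is Landau's constant for the $x^{-1/3}$ form of the bound), but since both are below $1$ the conclusion is unaffected; your van der Corput sketch is an optional self-contained alternative, not a different proof.
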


\begin{proof}
  Recalling \eqref{eq:DefTc}, we see that $\Tc$ is multiplication by a
  function of modulus one, so \eqref{eq:EstTcLL} is immediate.
  On the other hand, combining \eqref{eq:TcConvolution} with the last
  inequality from page 199 of \cite{Lan00}; more precisely, 
  \begin{equation*}
    |J_n(x)| \,<\, \frac{b}{|x|^{\third}} \qquad 
    \text{with } b \approx 0.6749 \,,
  \end{equation*}
  shows that 
  \begin{equation*}
    \|\Tc\|_{\l1,\l\infty} 
    \,\le\, \sup_{n\in\Z}|J_{n}(|c|)| 
    \,\le\, \frac{1}{|c|^{\third}} \,.
  \end{equation*}
\end{proof}

\begin{proof}[Proof of theorem~\ref{th:Uncertainty2}]
  Using H\"older's inequality and \eqref{eq:EstTcll} we obtain that 
  \begin{equation*}
    |\langle\alpha,\Tc\beta\rangle|
    \,\le\, \|\alpha\|_{\l1} \|\Tc\beta\|_{\l\infty}
    \,\le\, \frac{1}{|c|^{\third}}\|\alpha\|_{\l1}\;\|\beta\|_{\l1}
    \,\le\,
    \frac{\sqrt{\|\alpha\|_{\l0}\|\beta\|_{\l0}}}{|c|^{\third}}
    \|\alpha\|_{\l2} \|\beta\|_{\l2} \,.
  \end{equation*}
\end{proof}

We can improve the dependence on  $|c|$ in \eqref{eq:Uncertainty2}
under  hypotheses on  $\alpha$ and $\beta$ that are more restrictive,
but well suited to the inverse source problem.

\begin{theorem}
  \label{th:Uncertainty3}
  Suppose that  $\alpha\in\l2(-M,M)$, $\beta\in\l2(-N,N)$ with
  $M,N\geq1$, and let $c\in\R^2$ such that $|c|>2(M+N+1)$. 
  Then
  \begin{equation}
    \label{eq:Uncertainty3}
    |\langle\alpha,\Tc\beta\rangle| \,\le\,
    \frac{\sqrt{(2N+1)(2M+1)}}{|c|^{\half}}
    \|\alpha\|_2 \|\beta\|_2 \,.
  \end{equation}
\end{theorem}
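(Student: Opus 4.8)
The plan is to expand $\langle\alpha,\Tc\beta\rangle$ as an absolutely convergent double sum over the (finite) supports of $\{\alpha_n\}$ and $\{\beta_n\}$, using the convolution representation \eqref{eq:TcConvolution} of $\Tc$, and then to exploit that under the hypothesis $|c|>2(M+N+1)$ every Bessel factor that appears has order small compared to its argument, so it decays like $|c|^{-1/2}$ rather than merely $|c|^{-1/3}$.

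First I would write, using \eqref{eq:TcConvolution} and reindexing by $l=m-n$,
\[
  \langle\alpha,\Tc\beta\rangle
  \,=\, \sum_{m}\sum_{l}\overline{\alpha_m}\,\beta_l\,
  \rmi^{\,m-l} J_{m-l}(|c|)\, e^{\rmi(m-l)\varphi_c}\,.
\]
Since $\supp\{\alpha_n\}\tm\{-M,\dots,M\}$ and $\supp\{\beta_n\}\tm\{-N,\dots,N\}$, the sum runs only over $|m|\le M$, $|l|\le N$, and the phase factors $\rmi^{\,m-l}e^{\rmi(m-l)\varphi_c}$ have modulus one; hence
\[
  |\langle\alpha,\Tc\beta\rangle|
  \,\le\, \sum_{|m|\le M}\sum_{|l|\le N} |\alpha_m|\,|\beta_l|\,|J_{m-l}(|c|)|\,.
\]
In this finite sum the order satisfies $|m-l|\le M+N$, so everything reduces to a uniform bound on $|J_k(|c|)|$ for $|k|\le M+N$.

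The main step is the Bessel estimate, and this is where the improved exponent comes from. Instead of the order-uniform bound $|J_n(x)|<b\,|x|^{-1/3}$ used in lemma~\ref{lmm:TcEstimate}, I would invoke a bound valid in the oscillatory regime where the argument dominates the order, of Nicholson type, namely $|J_n(x)|\le \sqrt{2/\pi}\,(x^2-n^2)^{-1/4}$ for $x>|n|$. The hypothesis $|c|>2(M+N+1)$ guarantees $|c|>2|k|$ for every $|k|\le M+N$, whence $k^2<|c|^2/4$ and $|c|^2-k^2>\tfrac34|c|^2$; substituting gives $|J_k(|c|)|\le \sqrt{2/\pi}\,(4/3)^{1/4}|c|^{-1/2}<|c|^{-1/2}$, since the numerical constant is below one. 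The gain over theorem~\ref{th:Uncertainty2} is exactly this: when $|c|$ is large relative to the two bandwidths, the convolution kernel entries decay like $|c|^{-1/2}$ uniformly.

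Finally I would insert $|J_{m-l}(|c|)|\le |c|^{-1/2}$ and factor the double sum, controlling each one-dimensional sum over its support by Cauchy--Schwarz:
\[
  \sum_{|m|\le M}|\alpha_m|\le \sqrt{2M+1}\,\|\alpha\|_2\,,\qquad
  \sum_{|l|\le N}|\beta_l|\le \sqrt{2N+1}\,\|\beta\|_2\,,
\]
which yields \eqref{eq:Uncertainty3}. I expect the only real obstacle to be justifying the Nicholson-type Bessel bound and checking that its constant, together with the factor $2$ in $|c|>2(M+N+1)$, indeed keeps the prefactor at or below one; the remainder is the same Hölder/Cauchy--Schwarz bookkeeping as in the proof of theorem~\ref{th:Uncertainty2}.
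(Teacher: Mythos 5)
Your argument is correct, and its skeleton is the same as the paper's: expand $\langle\alpha,\Tc\beta\rangle$ through the convolution representation \eqref{eq:TcConvolution}, observe that only Bessel factors $J_k(|c|)$ with $|k|\le M+N$ occur, bound these uniformly by $|c|^{-1/2}$, and finish with the H\"older/Cauchy--Schwarz bookkeeping already used for theorem~\ref{th:Uncertainty2}. The genuine difference is the source of the key Bessel estimate. The paper obtains the $|c|^{-1/2}$ decay from theorem~2 of Krasikov \cite{Kra06} and needs the dedicated computation in appendix~\ref{app:ProofKrasikov} to convert it into $\sup_{|k|\leq M+N}J_k^2(|c|)\le b/|c|$ with $b\approx 0.7595$; you instead invoke the classical Nicholson--Watson bound for the modulus function, $J_\nu^2(x)\le J_\nu^2(x)+Y_\nu^2(x)\le 2/\bigl(\pi\sqrt{x^2-\nu^2}\bigr)$ for $x>\nu$ (Watson, \emph{Theory of Bessel Functions}, sec.~13.74; see also \cite[sec.~10.18(iii)]{OLBC10}), which together with $|c|^2-k^2>\tfrac34|c|^2$ gives $J_k^2(|c|)\le \frac{2}{\pi}\sqrt{\tfrac43}\,|c|^{-1}\approx 0.735\,|c|^{-1}$ --- a marginally better constant than the paper's, obtained with far less computation. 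One point you must patch: the classical modulus inequality is proved for orders $\nu\ge\tfrac12$, so the term $k=m-l=0$ in your double sum is not literally covered by the bound as you quote it. It is easily recovered: Nicholson's integral $J_\nu^2(x)+Y_\nu^2(x)=\frac{8}{\pi^2}\int_0^\infty K_0(2x\sinh t)\cosh(2\nu t)\,dt$ shows the modulus is nondecreasing in $\nu\ge0$, hence $J_0^2(x)\le J_{1/2}^2(x)+Y_{1/2}^2(x)=\frac{2}{\pi x}$, which is exactly your claimed bound at order zero. With that remark added, your constant check $\sqrt{2/\pi}\,(4/3)^{1/4}\approx 0.857<1$ and the concluding estimates $\sum_{|m|\le M}|\alpha_m|\le\sqrt{2M+1}\,\|\alpha\|_2$, $\sum_{|l|\le N}|\beta_l|\le\sqrt{2N+1}\,\|\beta\|_2$ are sound, and the proof is complete.
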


\begin{proof}
  Because the \l0-support of  $\beta$ is contained in  $[-N,N]$ 
  \begin{equation*}
    \beta^{c}_{m} \,=\,
    \sum_{n=-N}^{N} \beta_{n} \Bigl( \rmi^{m-n} J_{m-n}(|c|) 
    e^{\rmi(m-n)\varphi_{c}}\Bigr)
  \end{equation*}
  so
  \begin{equation*}
    \sup_{-M<m<M}| \beta^{c}_{m}| 
    \,\le\, \|\beta\|_{\l1} \sup_{-(M+N)<n<(M+N)} |J_{n}(|c|)|
  \end{equation*}
  and it follows from theorem 2 of \cite{Kra06}, using the fact that
  $M,N\ge 1$, together with our hypothesis, which implies that
  $|c|>6$, that 
  \begin{equation}
    \label{eq:ProofUncertainty3-1}
    \sup_{-(M+N)<n<(M+N)} J_n^2(|c|)
    \,\le\, \frac{b}{|c|} \qquad \text{with $b \approx 0.7595$}
  \end{equation}
  (see appendix~\ref{app:ProofKrasikov} for details).
  We now simply repeat the proof of theorem~\ref{th:Uncertainty2},
  replacing  the estimate for $\|\Tc\beta\|_{\l\infty}$ from
  \eqref{eq:EstTcll} with the estimate we have just established in
  \eqref{eq:ProofUncertainty3-1}, i.e. 
  \begin{equation}
    \label{eq:ProofUncertainty3-2}
    \|\Tc\|_{\l1[-N,N],\l\infty[-M,M]} 
    \,\le\, \frac{1}{|c|^{\half}} \,.
  \end{equation}
\end{proof}

We will also make use of another uncertainty principle. 
A glance at \eqref{eq:DefFourierCoeffs}--\eqref{eq:Plancherel} reveals
that the operator which maps $\alpha$ to its Fourier coefficients maps
\L2 to \l2 with norm 1, \L1 to \l\infty \,with norm
$1/\sqrt{2\pi}$, and its inverse maps \l1 to \L\infty, also with norm
$1/\sqrt{2\pi}$. 
An immediate corollary of this observation is

\begin{theorem}
  \label{th:FourierUncertainty}
  Let $\alpha, \beta \in L^2(S^1)$ and let $c\in\R^2$.
  Then,
  \begin{equation}
    \label{eq:FourierUncertainty}
    |\langle T_c\alpha,\beta\rangle| 
    \,\le\, \sqrt{\frac{\|\alpha\|_{\l0} \|\beta\|_{\L0}}{2\pi}}
    \|\alpha\|_2 \|\beta\|_2 \,.
  \end{equation}
\end{theorem}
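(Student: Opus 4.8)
The plan is to mirror the proof of theorem~\ref{th:Uncertainty2}, but to carry out the H\"older splitting in the \emph{function} picture rather than the sequence picture, so that the asymmetry between the two zero-norms (the $\l0$-support for $\alpha$, the $\L0$-support for $\beta$) is respected. First I would write the pairing as an integral over $S^1$,
\[
  \langle T_c\alpha,\beta\rangle
  \,=\, \int_{S^1} e^{\rmi c\cdot\theta}\,\alpha(\theta)\,\overline{\beta(\theta)}\dtheta \,,
\]
and apply H\"older's inequality with the $\L\infty$--$\L1$ exponent pair, giving $|\langle T_c\alpha,\beta\rangle|\le \|T_c\alpha\|_{\L\infty}\|\beta\|_{\L1}$. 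Since $T_c$ is multiplication by a unimodular function, $\|T_c\alpha\|_{\L\infty}=\|\alpha\|_{\L\infty}$, exactly the $p=\infty$ instance of \eqref{eq:EstTcLL}.

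The two remaining factors are then controlled by the matching Cauchy--Schwarz bounds, one in each picture. For $\beta$, which vanishes off a set of measure $\|\beta\|_{\L0}$, Cauchy--Schwarz against the indicator of its support gives $\|\beta\|_{\L1}\le \sqrt{\|\beta\|_{\L0}}\,\|\beta\|_{\L2}$. For $\alpha$, I would pass to the Fourier side: the inverse Fourier coefficient map sends $\l1$ to $\L\infty$ with norm $1/\sqrt{2\pi}$ (the observation quoted just before the theorem, read off from $\alpha(\theta)=\sum_n\alpha_n e^{\rmi n\theta}/\sqrt{2\pi}$), so $\|\alpha\|_{\L\infty}\le \tfrac{1}{\sqrt{2\pi}}\|\alpha\|_{\l1}$, and then Cauchy--Schwarz on the finite $\l0$-support yields $\|\alpha\|_{\l1}\le \sqrt{\|\alpha\|_{\l0}}\,\|\alpha\|_{\l2}$.

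Chaining these estimates together and using Plancherel ($\|\alpha\|_{\l2}=\|\alpha\|_2$ and $\|\beta\|_{\L2}=\|\beta\|_2$) produces
\[
  |\langle T_c\alpha,\beta\rangle|
  \,\le\, \frac{1}{\sqrt{2\pi}}\,\sqrt{\|\alpha\|_{\l0}}\,\|\alpha\|_2
  \cdot \sqrt{\|\beta\|_{\L0}}\,\|\beta\|_2 \,,
\]
which is precisely \eqref{eq:FourierUncertainty}. There is no genuine obstacle here; the one thing to get right is the bookkeeping, namely that $\alpha$ must be measured by its $\l0$-support (hence handled on the Fourier side, which is where the factor $1/\sqrt{2\pi}$ enters) while $\beta$ is measured by its $\L0$-support (hence handled on the function side). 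Choosing the opposite H\"older pairing, $\|T_c\alpha\|_{\L1}\|\beta\|_{\L\infty}$, would instead produce the combination $\|\alpha\|_{\L0}\|\beta\|_{\l0}$, so the orientation of the pairing is the single decision that matters.
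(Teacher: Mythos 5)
Your proposal is correct and follows essentially the same route as the paper's proof: the same H\"older pairing $|\langle T_c\alpha,\beta\rangle|\le\|T_c\alpha\|_{\L\infty}\|\beta\|_{\L1}$, the unimodularity of $T_c$ via \eqref{eq:EstTcLL}, the $\l1\to\L\infty$ mapping property of the Fourier coefficient map contributing the $1/\sqrt{2\pi}$, and Cauchy--Schwarz on the two supports. Your closing remark about the orientation of the H\"older pairing is a sound observation but not a departure from the paper's argument.
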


\begin{proof}
  Combining H\"older's inequality with \eqref{eq:EstTcLL} and using the
  mapping properties of the operator which maps $\alpha$ to its Fourier
  coefficients we find that 
  \begin{equation*}
    \begin{split}
      |\langle T_c\alpha,\beta\rangle|
      &\,\le\, \|T_c \alpha\|_{\L\infty} \|\beta\|_{\L1} 
      \,\le\, \|\alpha\|_{\L\infty} \|\beta\|_{\L1} 
      \,\le\, \frac{1}{\sqrt{2\pi}}\|\alpha\|_{\l1} \|\beta\|_{\L1}\\
      &\,\le\, \frac{1}{\sqrt{2\pi}} \sqrt{\|\alpha\|_{\l0}} \|\alpha\|_2 
      \sqrt{\|\beta\|_{\L0}} \|\beta\|_2 \,.
    \end{split}
  \end{equation*}
\end{proof}

\section{\l2 corollaries of the uncertainty principles}
\label{sec:L2Corollaries}
The regularized Picard criterion tells us that, up to an \L2-small
error, a far field radiated by a limited power source in $B_R(0)$ is
\L2-close to an $\alpha$ that belongs to the subspace of
non-evanescent far fields, the span of $\{e^{\rmi n\theta}\}$ with
$|n|\le N$, where $N = N(R,P,p)$ is a little bigger than the radius
$R$. 
This non-evanescent $\alpha$ satisfies $\|\alpha\|_{\l0}\le 2N+1$.
The uncertainty principle will show that the angle between translates
of these subspaces is bounded below when the  translation parameter is
large enough, so that we can split the sum of the two non-evanescent
far fields into the original two summands. 

\begin{lemma}
  \label{lm:L2Split}
  Suppose that $\gamma,\alpha_1,\alpha_2\in\L2(S^1)$ and
  $c_1,c_2\in\R^2$ with
  \begin{equation}
    \label{eq:Split2}
    \gamma 
    \,=\, T^*_{c_1}\alpha_1 + T^*_{c_2}\alpha_2
  \end{equation}
  and that
  $\frac{\|\alpha_1\|_{\l0}\|\alpha_2\|_{\l0}}{|c_1-c_2|^{\frac{2}{3}}}<1$.
  Then, for  $i=1,2$
  \begin{equation}
    \label{eq:L2Split}
    \|\alpha_{i}\|_2^{2}
    \,\le\, \biggl( 1 - \frac{\|\alpha_1\|_{\l0}\|\alpha_2\|_{\l0}}
    {|c_1-c_2|^{\frac{2}{3}}} \biggr)^{-1} \|\gamma\|_2^{2} \,.
  \end{equation}
\end{lemma}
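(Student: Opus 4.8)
The plan is to expand $\|\gamma\|_2^2$ directly from the representation \eqref{eq:Split2}, exploit that each $T^*_{c_i}$ is unitary on $\l2$, and control the single resulting cross term with the uncertainty principle of theorem~\ref{th:Uncertainty2}. Writing $\epsilon := \|\alpha_1\|_{\l0}\|\alpha_2\|_{\l0}/|c_1-c_2|^{\frac23}$, the hypothesis of the lemma is exactly $\epsilon<1$, and the quantity $\sqrt{\epsilon} = \sqrt{\|\alpha_1\|_{\l0}\|\alpha_2\|_{\l0}}/|c_1-c_2|^{\third}$ is precisely the constant appearing in \eqref{eq:Uncertainty2} with translation parameter $c=c_1-c_2$. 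This matching of constants is what makes the final estimate come out with the factor $(1-\epsilon)^{-1}$.

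First I would compute, using $\|T^*_{c_i}\alpha_i\|_2=\|\alpha_i\|_2$,
\[
  \|\gamma\|_2^2
  \,=\, \|\alpha_1\|_2^2 + \|\alpha_2\|_2^2
  + 2\,\real\langle T^*_{c_1}\alpha_1, T^*_{c_2}\alpha_2\rangle \,.
\]
Next I would rewrite the cross term. Since $\Tc$ is multiplication by $e^{\rmi c\cdot\theta}$, the family composes additively in $c$, and together with $T_c^*=T_{-c}$ this gives $\langle T^*_{c_1}\alpha_1, T^*_{c_2}\alpha_2\rangle = \langle \alpha_1, T_{c_1}T^*_{c_2}\alpha_2\rangle = \langle \alpha_1, T_{c_1-c_2}\alpha_2\rangle$. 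Applying theorem~\ref{th:Uncertainty2} with $c=c_1-c_2$ bounds the modulus of this inner product by $\sqrt{\epsilon}\,\|\alpha_1\|_2\|\alpha_2\|_2$, whence
\[
  \|\gamma\|_2^2
  \,\ge\, \|\alpha_1\|_2^2 + \|\alpha_2\|_2^2
  - 2\sqrt{\epsilon}\,\|\alpha_1\|_2\|\alpha_2\|_2 \,.
\]

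The final step is to isolate one summand by completing the square. Setting $a=\|\alpha_1\|_2$ and $b=\|\alpha_2\|_2$, nonnegativity of $(\sqrt{\epsilon}\,a-b)^2$ yields $a^2+b^2-2\sqrt{\epsilon}\,ab \ge (1-\epsilon)a^2$, and symmetrically $(\sqrt{\epsilon}\,b-a)^2\ge0$ yields the same lower bound with $b^2$ in place of $a^2$. Since $\epsilon<1$ we may divide by $1-\epsilon>0$, obtaining \eqref{eq:L2Split} for both $i=1,2$. I do not expect a substantial obstacle; the only points requiring care are the bookkeeping in the second step — correctly collapsing $T_{c_1}T^*_{c_2}$ to $T_{c_1-c_2}$ so that the uncertainty principle is invoked with the \emph{difference} $c_1-c_2$ and hence $|c_1-c_2|$ appears — and recognizing that it is exactly the elementary completion of the square that converts the lower bound on $\|\gamma\|_2^2$ into the claimed upper bound on each $\|\alpha_i\|_2^2$.
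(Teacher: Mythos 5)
Your proposal is correct and follows essentially the same route as the paper's proof: expand $\|\gamma\|_2^2$ using unitarity of the translation operators, collapse the cross term to $\langle\alpha_1,T_{c_1-c_2}\alpha_2\rangle$, bound it by theorem~\ref{th:Uncertainty2}, and complete the square to isolate each $\|\alpha_i\|_2^2$. The only cosmetic difference is that the paper writes the cross term as $\langle\alpha_1,T^*_{c_2-c_1}\alpha_2\rangle$ (identical to your $T_{c_1-c_2}$ since $T_c^*=T_{-c}$) and bounds it by its modulus from the start rather than via the real part.
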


\begin{proof}
  We first note that \eqref{eq:Split2} and \eqref{eq:DefTc} imply
  \begin{equation}
    \label{eq:proofSplit2-1}
    \begin{split}
      \|\gamma\|_2^{2} 
      &\,\geq\, \|\alpha_1\|_2^{2} + \|\alpha_2\|_2^{2} 
      - 2|\langle T^*_{c_1}\alpha_1,T^*_{c_2}\alpha_2\rangle|\\
      &\,=\, \|\alpha_1\|_2^{2} + \|\alpha_2\|_2^{2} 
      - 2|\langle \alpha_1,T^*_{c_2-c_1}\alpha_2\rangle| \,.
    \end{split}
  \end{equation}
  We now use \eqref{eq:Uncertainty2},
  \begin{equation}
    \label{eq:proofSplit2-2}
    \begin{split}
      \|\gamma\|_2^{2} 
      &\,\ge\, \|\alpha_1\|_2^{2} + \|\alpha_2\|_2^{2}
      -2\frac{\sqrt{\|\alpha_1\|_{\l0}\|\alpha_2\|_{\l0}}}{|c_2-c_1|^{\frac{1}{3}}}
      \|\alpha_1\|_2 \|\alpha_2\|_2\\
      &\,=\, \biggl(1-\frac{\|\alpha_1\|_{\l0}\|\alpha_2\|_{\l0}}
      {|c_2-c_1|^{\frac{2}{3}}}\|\biggr)\|\alpha_1\|_2^{2}
      +\biggl(\|\alpha_2\|_2-\frac{\sqrt{\|\alpha_1\|_{\l0}\|\alpha_2\|_{\l0}}}
      {|c_2-c_1|^{\frac{1}{3}}}\|\alpha_1\|_2\biggr)^{2} \,.
    \end{split}
  \end{equation}
  Dropping the second term now gives \eqref{eq:L2Split} for
  $\alpha_1$, and we may interchange the roles $\alpha_1$ and
  $\alpha_2$ in the proof to obtain the estimate for $\alpha_2$ .
\end{proof}

The analogous consequence of theorem~\ref{th:Uncertainty3} is
\begin{lemma}
  \label{lm:L2SplitWS}
  Suppose that $\gamma \in \L2(S^1)$,
  $\alpha_{i}\in\l2(-N_{i},N_{i})$ for some $N_i\in\N$, $i=1,2$, and 
  $c_1,c_2\in\R^2$ with $|c_1-c_2|>2(N_1+N_2+1)$ and
  \begin{equation*}
    \gamma 
    \,=\, T^*_{c_1}\alpha_1 + T^*_{c_2}\alpha_2 \,,
  \end{equation*}
  and that $\frac{(2N_1+1)(2N_2+1)}{|c_1-c_2|}<1$.
  Then, for  $i=1,2$
  \begin{equation}
    \label{eq:L2SplitWS}
    \|\alpha_{i}\|_2^{2}
    \,\le\,
    \biggl(1-\frac{(2N_1+1)(2N_2+1)}{|c_1-c_2|}\biggr)^{-1}\|\gamma\|_2^{2} \,.
  \end{equation}
\end{lemma}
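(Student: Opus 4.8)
The plan is to mimic the proof of Lemma~\ref{lm:L2Split} almost verbatim, substituting the sharper estimate of Theorem~\ref{th:Uncertainty3} for the generic uncertainty principle used there. First I would expand the squared norm of the decomposition $\gamma = T^*_{c_1}\alpha_1 + T^*_{c_2}\alpha_2$, retaining the cross term, to obtain
\begin{equation*}
  \|\gamma\|_2^2 \,\geq\, \|\alpha_1\|_2^2 + \|\alpha_2\|_2^2 - 2|\langle T^*_{c_1}\alpha_1, T^*_{c_2}\alpha_2\rangle| \,.
\end{equation*}
Since $T_c$ is unitary with $T_c^* = T_{-c}$, the cross term collapses to a single translation, $\langle T^*_{c_1}\alpha_1, T^*_{c_2}\alpha_2\rangle = \langle \alpha_1, T^*_{c_2-c_1}\alpha_2\rangle$, so that only the separation $|c_1-c_2|$ enters, exactly as in \eqref{eq:proofSplit2-1}.

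The decisive step is to bound this inner product with Theorem~\ref{th:Uncertainty3} rather than with \eqref{eq:Uncertainty2}. Here the bookkeeping is what I would check carefully: applying \eqref{eq:Uncertainty3} with $M = N_1$, $N = N_2$ and translation vector $c_2-c_1$, the admissibility hypothesis $|c| > 2(M+N+1)$ becomes precisely $|c_1-c_2| > 2(N_1+N_2+1)$, which is assumed. (One also uses $N_i \geq 1$, as $N_i \in \N$, to meet the standing requirement $M,N\geq1$ of Theorem~\ref{th:Uncertainty3}.) This yields
\begin{equation*}
  |\langle \alpha_1, T^*_{c_2-c_1}\alpha_2\rangle| \,\leq\, \frac{\sqrt{(2N_1+1)(2N_2+1)}}{|c_1-c_2|^{\half}} \|\alpha_1\|_2 \|\alpha_2\|_2 \,.
\end{equation*}

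Writing $\lambda^2 := (2N_1+1)(2N_2+1)/|c_1-c_2|$, I would then complete the square exactly as in \eqref{eq:proofSplit2-2} to get
\begin{equation*}
  \|\gamma\|_2^2 \,\geq\, (1-\lambda^2)\|\alpha_1\|_2^2 + \bigl(\|\alpha_2\|_2 - \lambda\|\alpha_1\|_2\bigr)^2 \,.
\end{equation*}
Dropping the nonnegative second term and using $\lambda^2 < 1$ (the second smallness hypothesis) gives \eqref{eq:L2SplitWS} for $\alpha_1$; interchanging the roles of $\alpha_1$ and $\alpha_2$ yields the estimate for $\alpha_2$. I do not anticipate a genuine obstacle here: the only nontrivial point is confirming that the geometric separation hypothesis of the lemma is exactly the one that licenses Theorem~\ref{th:Uncertainty3}, and that the $|c|^{-1/2}$ decay there (versus $|c|^{-1/3}$ in Theorem~\ref{th:Uncertainty2}) propagates correctly into the $|c_1-c_2|^{-1}$ appearing in the final constant, in place of the $|c_1-c_2|^{-2/3}$ of Lemma~\ref{lm:L2Split}.
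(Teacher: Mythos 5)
Your proof is correct and is exactly the paper's argument: the paper states this lemma as ``the analogous consequence of Theorem~\ref{th:Uncertainty3},'' meaning precisely that one repeats the proof of Lemma~\ref{lm:L2Split} with the estimate \eqref{eq:Uncertainty3} replacing \eqref{eq:Uncertainty2}, which is what you do. Your bookkeeping of the hypotheses ($M=N_1$, $N=N_2$, $|c_1-c_2|>2(N_1+N_2+1)$, $N_i\geq1$) and the completion of the square are all as intended.
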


In our application to the inverse source problem, we will know that
each far field is the translation of a far field  $\alpha_{i}$,
radiated by a limited power source supported in a ball centered at the
origin, and therefore that all but a very small amount of the radiated
power is contained in the non-evanescent part, the translation of the
Fourier modes $e^{\rmi n\theta}$ for $|n|<N(R,p,P)$. 
The estimate in the theorem below says that, if the distances between
the balls is large enough, we may uniquely solve for the
non-evanescent parts of the individual far fields, and that this split
is stable with respect to perturbations in the data. 

\begin{theorem}
  \label{th:L2SplitLS}
  Suppose that $\gamma^0,\gamma^1\in\L2(S^1)$, $c_1,c_2\in\R^2$ and
  $N_1,N_2\in\N$ such that $|c_1-c_2|>2(N_1+N_2+1)$ and
  \begin{equation}
    \label{eq:L2SplitLSPositivity}
    \frac{(2N_1+1)(2N_2+1)}{|c_1-c_2|} \,<\, 1 \,,
  \end{equation}
  and let
  \begin{subequations}
    \label{eq:split2LS}
    \begin{align}
      \gamma^{0} 
      &\,\mathop{=}\limits^{\mathrm{LS}}\,
      T^*_{c_1}\alpha_1^{0}+T^*_{c_2}\alpha_2^{0} \,,
      &&\alpha_{i}^{0}\in\l2(-N_{i},N_{i}) \,,\\
      \gamma^{1}
      &\,\mathop{=}\limits^{\mathrm{LS}}\,
      T^*_{c_1}\alpha_1^{1}+T^*_{c_2}\alpha_2^{1} \,,
      &&\alpha_{i}^{1}\in\l2(-N_{i},N_{i}) \,.
    \end{align}
  \end{subequations}
  Then, for  $i=1,2$ 
  \begin{equation}
    \label{eq:L2SplitLS}
    \|\alpha^{1}_{i}-\alpha_{i}^{0}\|_2^{2} 
    \,\le\,
    \biggl(1-\frac{(2N_1+1)(2N_2+1)}{|c_1-c_2|}\biggr)^{-1} 
    \|\gamma^{1}-\gamma^{0}\|_2^{2} \,.
  \end{equation}
\end{theorem}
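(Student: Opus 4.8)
The plan is to reduce the stability estimate to the exact splitting bound of Lemma~\ref{lm:L2SplitWS} by exploiting the linearity of the least squares fit. First I would make explicit what the notation $\mathop{=}\limits^{\mathrm{LS}}$ encodes. Introducing the closed subspace
\[
V \,:=\, \bigl\{\, T^*_{c_1}\mu_1 + T^*_{c_2}\mu_2 \;\big|\; \mu_i \in \l2(-N_i,N_i),\ i=1,2 \,\bigr\} \,\subseteq\, L^2(S^1) \,,
\]
the coefficients $\alpha_i^j$ (for $i=1,2$ and $j=0,1$) are characterized by the requirement that $T^*_{c_1}\alpha_1^j + T^*_{c_2}\alpha_2^j$ equal the orthogonal projection $P\gamma^j$ of $\gamma^j$ onto $V$, where $P$ denotes that projection. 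Before proceeding I would check that these coefficients are well defined: applying Lemma~\ref{lm:L2SplitWS} to the exact decomposition of the zero far field (legitimate, because \eqref{eq:L2SplitLSPositivity} is precisely the hypothesis of that lemma) forces $\mu_1=\mu_2=0$, so the representation of any element of $V$ by a pair $(\mu_1,\mu_2)\in\l2(-N_1,N_1)\times\l2(-N_2,N_2)$ is unique. Hence $\alpha_i^0$ and $\alpha_i^1$ are uniquely determined by $\gamma^0$ and $\gamma^1$.

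Next I would pass to the differences $\delta\gamma := \gamma^1-\gamma^0$ and $\delta\alpha_i := \alpha_i^1-\alpha_i^0$ and exploit linearity. Since $P$ is linear, the element
\[
\eta \,:=\, T^*_{c_1}\delta\alpha_1 + T^*_{c_2}\delta\alpha_2 \,=\, P\gamma^1 - P\gamma^0 \,=\, P\,\delta\gamma
\]
is simply the projection of $\delta\gamma$ onto $V$. Because $P$ is an orthogonal projection it is norm nonincreasing, so $\|\eta\|_2 \le \|\delta\gamma\|_2$. The crucial point is that $\eta = T^*_{c_1}\delta\alpha_1 + T^*_{c_2}\delta\alpha_2$ is now an \emph{exact} decomposition with $\delta\alpha_i \in \l2(-N_i,N_i)$, of precisely the form treated in Lemma~\ref{lm:L2SplitWS}, and the hypotheses $|c_1-c_2|>2(N_1+N_2+1)$ and \eqref{eq:L2SplitLSPositivity} carry over verbatim.

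Finally I would apply Lemma~\ref{lm:L2SplitWS} with $\gamma$ replaced by $\eta$ to obtain, for $i=1,2$,
\[
\|\delta\alpha_i\|_2^2 \,\le\, \biggl(1-\frac{(2N_1+1)(2N_2+1)}{|c_1-c_2|}\biggr)^{-1}\|\eta\|_2^2 \,,
\]
and then substitute $\|\eta\|_2^2 \le \|\delta\gamma\|_2^2$ to arrive at \eqref{eq:L2SplitLS}. I expect the only genuinely non-routine step to be the first one, namely recognizing that the least squares approximant is the orthogonal projection of the data onto the fixed subspace $V$ and that this projection is therefore linear in the data. Once this structural observation is in place, the estimate is no longer about least squares at all but about an exact split, so the previously established Lemma~\ref{lm:L2SplitWS} applies directly; the remaining ingredients — linearity of $P$, the contraction $\|P\delta\gamma\|_2 \le \|\delta\gamma\|_2$, and the substitution — are routine.
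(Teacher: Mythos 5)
Your proposal is correct and follows essentially the same route as the paper's proof: both identify the least squares solutions with the orthogonal projection of the data onto the subspace $W = T^*_{c_1}\l2(-N_1,N_1) \oplus T^*_{c_2}\l2(-N_2,N_2)$, use linearity to see that the difference of the projections is the projection of the difference, apply lemma~\ref{lm:L2SplitWS} to that exact split, and finish with the norm-nonincreasing property of the projection (stated in the paper via the Pythagorean identity). Your added uniqueness check (applying lemma~\ref{lm:L2SplitWS} to the zero decomposition) is a nice touch that the paper asserts but leaves implicit.
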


The notation in \eqref{eq:split2LS} above means that the
$\alpha_{i}^{j}$ are the (necessarily unique) least squares
solutions to the equations
$\gamma^{j}=T^*_{c_1}\alpha_1^{j}+T^*_{c_2}\alpha_2^{j}$.
Recall that the far fields radiated by a limited power source from a
ball have almost all, but not all, of their power (\L2-norm)
concentrated in the Fourier modes with $n\le N(R,P,p)$. 
Therefore the $\gamma^{i}$ will typically not belong to the subspace
that is the direct sum of 
$T^*_{c_1}\l2(-N_1,N_1) \oplus T^*_{c_2}\l2(-N_2,N_2)$, and
therefore $\alpha_1^{j}$ and $\alpha_2^{j}$ will usually not solve
equations \eqref{eq:split2LS} exactly. 
The estimate in \eqref{eq:L2SplitLS} is nevertheless always true, and 
guarantees that the pair $(\alpha_1^{j},\alpha_2^{j})$ is unique
and that the absolute condition number of the splitting operator which maps
$\gamma$ to $(\alpha_1^{j},\alpha_2^{j})$ is no larger than
$\left(1-\frac{(2N_1+1)(2N_2+1)}{|c_1-c_2|}\right)^{-\half}$.

\begin{proof}[Proof of theorem~\ref{th:L2SplitLS}]
  Each  $\gamma^{j}$ can be uniquely decomposed as
  \begin{equation}
    \label{eq:proofL2SplitLS-1}
    \gamma^{j} \,=\, w^{j}+w^{j}_{\perp} \,,
  \end{equation}
  where each $w^{j}$ belongs to the  $2N_1+2N_2+2$-dimensional
  subspace 
  \begin{equation*}
    W 
    \,=\, T^*_{c_1}\l2(-N_1,N_1) \oplus T^*_{c_2}\l2(-N_2,N_2) 
  \end{equation*}
  and each $w^{j}_{\perp}$ is orthogonal to  $W$. 
  The definition of least squares solutions means that
  \begin{equation*}
    w^{j}
    \,=\, T^*_{c_1}\alpha^{j}_1 + T^*_{c_2}\alpha^{j}_2 \,.
  \end{equation*}
  Subtracting gives
  \begin{equation}
    \label{eq:proofL2SplitLS-2}
    w^{1}-w^{0}
    \,=\, T^*_{c_1}(\alpha^{1}_1-\alpha^{0}_1) 
    + T^*_{c_2}(\alpha^{1}_2 -\alpha^{0}_2)
  \end{equation}
  and applying the estimate \eqref{eq:L2SplitWS} yields
  \begin{equation}
    \label{eq:proofL2SplitLS-3}
    \|\alpha^{1}_{i}-\alpha^{0}_{i}\|_2^{2}
    \,\le\, \biggl(1-\frac{(2N_1+1)(2N_2+1)}{|c_1-c_2|}\biggr)^{-1} 
    \|w^{1}-w^{0}\|_2^{2} \,.
  \end{equation}
  Finally, we note that
  \begin{equation}
    \label{eq:proofL2SplitLS-4}
    \|\gamma^{1}-\gamma^{0}\|_2^{2} 
    \,=\, \|w^{1}-w^{0}\|_2^{2}+\|w^{1}_{\perp}-w^{0}_{\perp}\|_2^{2}
    \,\ge\, \|w^{1}-w^{0}\|_2^{2} \,,
  \end{equation}
  which finishes the proof.
\end{proof}

We also have corresponding corollaries of theorem
\ref{th:FourierUncertainty}, which tell us that, if a far field is
radiated from a small ball, and measured on most of the circle, then
it is possible to recover its non-evanescent part on the entire
circle. 
Theorem~\ref{th:L2CompleteLS} below, describes the case where we
cannot measure the far field $\alpha=\Tc^*\alpha^{0}$ on a subset
$\Omega\tm S^{1}$. 
We measure $\gamma=\alpha+\beta$, where
$\beta=-\alpha\big|_{\Omega}$. 
The estimates \eqref{eq:L2CompleteLS} imply that we can stably recover
the non-evanescent part of the far field on $\Omega$. 

Before we state the theorem, we give the corresponding analogue of
lemma~\ref{lm:L2Split} and lemma~\ref{lm:L2SplitWS}.
\begin{lemma}
  \label{lm:L2Complete}
  Suppose that $\gamma,\alpha,\beta\in\L2(S^1)$ and $c\in\R^2$ with
  \begin{equation*}
    \gamma 
    \,=\, \beta + T^*_{c}\alpha 
  \end{equation*}
  and that
  $\frac{\|\alpha\|_{\l0}\|\beta\|_{\L0}}{2\pi}<1$.
  Then
  \begin{subequations}
    \label{eq:L2Complete}
    \begin{align}
      \|\alpha\|_2^{2}
      &\,\le\, \biggl( 1 - \frac{\|\alpha\|_{\l0}\|\beta\|_{\L0}}{2\pi} 
      \biggr)^{-1} \|\gamma\|_2^{2} \\\noalign{and}
      \|\beta\|_2^{2}
      &\,\le\, \biggl( 1 - \frac{\|\alpha\|_{\l0}\|\beta\|_{\L0}}{2\pi} 
      \biggr)^{-1} \|\gamma\|_2^{2} \,.
    \end{align}
  \end{subequations}
\end{lemma}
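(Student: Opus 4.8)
My plan is to follow the proof of Lemma~\ref{lm:L2Split} essentially verbatim, replacing the translation uncertainty principle (Theorem~\ref{th:Uncertainty2}) by the Fourier uncertainty principle (Theorem~\ref{th:FourierUncertainty}). First I would expand the squared norm of $\gamma = \beta + T_c^*\alpha$. Since $T_c$ is unitary on $\l2$, so that $\|T_c^*\alpha\|_2 = \|\alpha\|_2$, this yields
\[
\|\gamma\|_2^2 = \|\alpha\|_2^2 + \|\beta\|_2^2 + 2\real\langle\beta, T_c^*\alpha\rangle \,\ge\, \|\alpha\|_2^2 + \|\beta\|_2^2 - 2\,|\langle\beta, T_c^*\alpha\rangle| \,.
\]

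The second step is to control the cross term, and this is where the only bookkeeping care is needed. Theorem~\ref{th:FourierUncertainty} is stated with the translation acting on the first slot, whereas here $T_c^*$ sits on the second. Taking complex conjugates and using $T_c^* = T_{-c}$ turns this around: $|\langle\beta, T_c^*\alpha\rangle| = |\langle T_{-c}\alpha, \beta\rangle|$. I can then apply Theorem~\ref{th:FourierUncertainty} directly; crucially, its bound is independent of $c$, so the substitution $c \mapsto -c$ costs nothing, and I obtain
\[
|\langle\beta, T_c^*\alpha\rangle| \,\le\, \sqrt{\frac{\|\alpha\|_{\l0}\,\|\beta\|_{\L0}}{2\pi}}\,\|\alpha\|_2\,\|\beta\|_2 \,=:\, \mu\,\|\alpha\|_2\,\|\beta\|_2 \,,
\]
where $\mu < 1$ by the hypothesis $\frac{\|\alpha\|_{\l0}\|\beta\|_{\L0}}{2\pi} < 1$. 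Note that the $\l0$-support weights the Fourier-sparse factor $\alpha$ and the $\L0$-support weights the spatially localized factor $\beta$, exactly as in the data-completion setup.

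Finally, I would complete the square as in Lemma~\ref{lm:L2Split}. Writing
\[
\|\gamma\|_2^2 \,\ge\, \|\alpha\|_2^2 + \|\beta\|_2^2 - 2\mu\,\|\alpha\|_2\|\beta\|_2 \,=\, (1-\mu^2)\|\alpha\|_2^2 + \bigl(\|\beta\|_2 - \mu\|\alpha\|_2\bigr)^2
\]
and discarding the nonnegative square yields $\|\gamma\|_2^2 \ge (1-\mu^2)\|\alpha\|_2^2$, which is the first estimate in \eqref{eq:L2Complete}. Grouping the terms the other way, as $(1-\mu^2)\|\beta\|_2^2 + (\|\alpha\|_2 - \mu\|\beta\|_2)^2$, and again dropping the square gives the second estimate.

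I do not expect any genuine obstacle: the entire analytic content is packaged in Theorem~\ref{th:FourierUncertainty}, and the only point demanding attention is the conjugation/adjoint identity $|\langle\beta, T_c^*\alpha\rangle| = |\langle T_{-c}\alpha, \beta\rangle|$ needed to align the cross term with the form in which that theorem is stated, together with the observation that its constant does not depend on $c$.
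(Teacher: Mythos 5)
Your proposal is correct and follows exactly the paper's own route: the paper proves Lemma~\ref{lm:L2Complete} by repeating the argument of Lemma~\ref{lm:L2Split} (expand $\|\gamma\|_2^2$, bound the cross term, complete the square both ways) with the translation uncertainty principle \eqref{eq:Uncertainty2} replaced by the Fourier uncertainty principle \eqref{eq:FourierUncertainty}. Your extra care with the identity $|\langle\beta,T_c^*\alpha\rangle| = |\langle T_{-c}\alpha,\beta\rangle|$ and the $c$-independence of the bound is a detail the paper leaves implicit, and it is handled correctly.
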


\begin{proof}
  Proceeding as in \eqref{eq:proofSplit2-1}--\eqref{eq:proofSplit2-2},
  but replacing \eqref{eq:Uncertainty2} by
  \eqref{eq:FourierUncertainty} yields the result. 
\end{proof}

\begin{theorem}
  \label{th:L2CompleteLS}
  Suppose that $\gamma^0,\gamma^1\in\L2(S^1)$, $c\in\R^2$, $N\in\N$
  and $\Omega\tm S^1$ such that $\frac{(2N+1)|\Omega|}{2\pi}<1$, and
  let
  \begin{align*}
    \gamma^{0} 
    &\,\mathop{=}\limits^{\mathrm{LS}}\,
    \beta^{0} + \Tc\alpha^{0} \,,
    &&\alpha^{0}\in\l2(-N,N) \text{ and }
    \beta^{0}\in\L2(\Omega)\,,\\
    \gamma^{1}
    &\,\mathop{=}\limits^{\mathrm{LS}}\, 
    \beta^{1} + \Tc\alpha^{1} \,,
    &&\alpha^{1}\in\l2(-N,N) \text{ and } 
    \beta^{1}\in\L2(\Omega) \,.
  \end{align*}
  Then
  \begin{subequations}
    \label{eq:L2CompleteLS}
    \begin{align}
      \|\alpha^{1}-\alpha^{0}\|_2^{2}
      &\,\le\, \biggl(1-\frac{(2N+1)|\Omega|}{2\pi}\biggr)^{-1}
      \|\gamma^{1}-\gamma^{0}\|_2^{2}\\\noalign{and}
      \|\beta^{1}-\beta^{0}\|_2^{2}
      &\,\le\, \biggl(1-\frac{(2N+1)|\Omega|}{2\pi}\biggr)^{-1}
      \|\gamma^{1}-\gamma^{0}\|_2^{2} \,.
    \end{align}
  \end{subequations}
\end{theorem}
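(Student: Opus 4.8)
The plan is to follow the proof of theorem~\ref{th:L2SplitLS} line for line, replacing the translated finite-dimensional subspaces by the mixed time/frequency pair and invoking lemma~\ref{lm:L2Complete} in place of lemma~\ref{lm:L2SplitWS}. First I would introduce the closed subspace
\[
  W \,=\, \L2(\Omega) \oplus \Tc\l2(-N,N) \,\subseteq\, \L2(S^1) \,,
\]
which is closed because it is the sum of the closed subspace $\L2(\Omega)$ and the finite-dimensional subspace $\Tc\l2(-N,N)$. I would then decompose each datum orthogonally as $\gamma^{j}=w^{j}+w^{j}_{\perp}$ with $w^{j}\in W$ and $w^{j}_{\perp}\perp W$. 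By definition of the least squares solutions, $w^{j}$ is the orthogonal projection of $\gamma^{j}$ onto $W$, and the representation $w^{j}=\beta^{j}+\Tc\alpha^{j}$ records its coordinates in the two summands.

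Subtracting the two projections gives $w^{1}-w^{0}=(\beta^{1}-\beta^{0})+\Tc(\alpha^{1}-\alpha^{0})$, an identity of exactly the form treated in lemma~\ref{lm:L2Complete}, with $w^{1}-w^{0}$ in the role of $\gamma$, $\beta^{1}-\beta^{0}\in\L2(\Omega)$ in the role of $\beta$, and $\alpha^{1}-\alpha^{0}\in\l2(-N,N)$ in the role of $\alpha$. To apply that lemma I would verify its positivity hypothesis: since $\alpha^{1}-\alpha^{0}$ is supported in $[-N,N]$ we have $\|\alpha^{1}-\alpha^{0}\|_{\l0}\le 2N+1$, and since $\beta^{1}-\beta^{0}$ is supported in $\Omega$ we have $\|\beta^{1}-\beta^{0}\|_{\L0}\le|\Omega|$, so that $\frac{\|\alpha^{1}-\alpha^{0}\|_{\l0}\|\beta^{1}-\beta^{0}\|_{\L0}}{2\pi}\le\frac{(2N+1)|\Omega|}{2\pi}<1$ by assumption. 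Lemma~\ref{lm:L2Complete} then bounds both $\|\alpha^{1}-\alpha^{0}\|_2^{2}$ and $\|\beta^{1}-\beta^{0}\|_2^{2}$ by $\bigl(1-\frac{(2N+1)|\Omega|}{2\pi}\bigr)^{-1}\|w^{1}-w^{0}\|_2^{2}$. Finally, the orthogonality of the splitting yields $\|\gamma^{1}-\gamma^{0}\|_2^{2}=\|w^{1}-w^{0}\|_2^{2}+\|w^{1}_{\perp}-w^{0}_{\perp}\|_2^{2}\ge\|w^{1}-w^{0}\|_2^{2}$, and substituting this into the previous bound gives both inequalities in \eqref{eq:L2CompleteLS}.

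The only genuinely new point — and the one I expect to be the main obstacle — is the well-definedness of the least squares solutions, i.e.\ that the coordinates $(\beta^{j},\alpha^{j})$ of $w^{j}$ in $W$ are unique. This is the assertion that the sum defining $W$ is direct, equivalently that $\L2(\Omega)\cap\Tc\l2(-N,N)=\{0\}$, and it is exactly here that the Fourier uncertainty principle does its work, just as the translation uncertainty principle underlies the corresponding step in theorem~\ref{th:L2SplitLS}. Indeed, if $h=\beta=\Tc\alpha$ lies in the intersection with $\beta\in\L2(\Omega)$ and $\alpha\in\l2(-N,N)$, then applying \eqref{eq:FourierUncertainty} together with the unitarity of $\Tc$ gives $\|h\|_2^{2}=|\langle\Tc\alpha,\beta\rangle|\le\sqrt{\frac{(2N+1)|\Omega|}{2\pi}}\,\|h\|_2^{2}$, which forces $h=0$ since the coefficient is strictly less than $1$. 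This confirms uniqueness, so that the splitting operator sending $\gamma$ to $(\beta,\alpha)$ is well-defined and the stated estimates bound its absolute condition number by $\bigl(1-\frac{(2N+1)|\Omega|}{2\pi}\bigr)^{-\half}$.
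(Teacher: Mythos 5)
Your proof is correct and follows essentially the same route as the paper's: the same orthogonal decomposition $\gamma^{j}=w^{j}+w^{j}_{\perp}$ onto $W=\L2(\Omega)\oplus\Tc\l2(-N,N)$, the same invocation of lemma~\ref{lm:L2Complete} applied to $w^{1}-w^{0}$, and the same final inequality $\|\gamma^{1}-\gamma^{0}\|_2^{2}\ge\|w^{1}-w^{0}\|_2^{2}$. Your explicit verification that the sum defining $W$ is direct (via theorem~\ref{th:FourierUncertainty} and the unitarity of $\Tc$) is a sound addition that the paper leaves implicit, since there uniqueness is read off from the stability estimate itself.
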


\begin{proof}
  Just as in \eqref{eq:proofL2SplitLS-1}, we decompose each $\gamma^{j}$ 
  \begin{equation*}
    \gamma^{j} \,=\, w^{j} + w^{j}_{\perp} \,,
  \end{equation*}
  where each $w^{j}$ belongs to the subspace
  \begin{equation*}
    W \,=\, \L2(\Omega) \oplus \Tc\l2(-N,N)
  \end{equation*}
  and each $w^{j}_{\perp}$ is orthogonal to $W$. 
  Proceeding as in
  \eqref{eq:proofL2SplitLS-2}--\eqref{eq:proofL2SplitLS-3}, but using
  the estimates from \eqref{eq:L2Complete}, we find 
  \begin{align*}
    \|\alpha^{1}-\alpha^{0}\|_2^{2}
    \,\le\, \biggl(1-\frac{(2N+1)|\Omega|}{2\pi}\biggr)^{-1} 
    \|w^{1}-w^{0}\|_2^{2}\\\noalign{and}
    \|\beta^{1}-\beta^{0}\|_2^{2}
    \,\le\, \biggl(1-\frac{(2N+1)|\Omega|}{2\pi}\biggr)^{-1} 
    \|w^{1}-w^{0}\|_2^{2}
  \end{align*}
  and then note that \eqref{eq:proofL2SplitLS-4} is true here as well
  to finish the proof.
\end{proof}

A version of theorem~\ref{th:L2SplitLS} with multiple well-separated
components is also true. 

\begin{theorem}
  \label{th:L2SplitMultipleLS}
  Suppose that $\gamma^0,\gamma^1\in\L2(S^1)$, $c_i\in\R^2$ and
  $N_i\in\N$, $i=1,\ldots,I$, such that
  $|c_{i}-c_{j}|>2(N_{i}+N_{j}+1)$ for every $i\ne j$ and 
  \begin{equation*}
    \biggl(\sqrt{2N_{i}+1}\sum_{j\not=i}
    \sqrt{\frac{2N_{j}+1}{|c_{i}-c_{j}|}}\biggr) 
    \,<\,1 \qquad \text{for each } i \,,
  \end{equation*}
  and let
  \begin{align*}
    \gamma^{0} &\,\mathop{=}\limits^{LS}\,
    \sum_{i=1}^{I} T^*_{c_{i}}\alpha_{i}^{0} \,,
    &&\alpha_{i}^{0}\in\l2(-N_{i},N_{i}) \,,\\
    \gamma^{1} &\,\mathop{=}\limits^{LS}\,
    \sum_{i=1}^{I}T^*_{c_{i}}\alpha_{i}^{1} \,,
    &&\alpha_{i}^{1}\in\l2(-N_{i},N_{i}) \,.
  \end{align*}
  Then, for $i=1,\ldots,I$
  \begin{equation*}
    \|\alpha_{i}^{1}-\alpha_{i}^{0}\|_2^{2}
    \,\le\, \biggl( 1-\sqrt{2N_{i}+1} 
    \sum_{j\not=i} \sqrt{\frac{2N_{j}+1}{|c_{j}-c_{i}|}} \biggr)^{-1}
    \|\gamma^{1}-\gamma^{0}\|_2^{2} \,.
  \end{equation*}
\end{theorem}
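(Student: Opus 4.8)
The plan is to reduce Theorem~\ref{th:L2SplitMultipleLS} to a multi-component analogue of Lemma~\ref{lm:L2SplitWS} and then to rerun the least squares decomposition argument from the proof of Theorem~\ref{th:L2SplitLS} verbatim. Concretely, I would first prove the splitting estimate: if $\gamma = \sum_{i=1}^I T^*_{c_i}\alpha_i$ with $\alpha_i\in\l2(-N_i,N_i)$ and the separation and positivity hypotheses hold, then $\|\alpha_i\|_2^2 \le (1-\rho_i)^{-1}\|\gamma\|_2^2$ for each $i$, where $\rho_i := \sqrt{2N_i+1}\sum_{j\ne i}\sqrt{(2N_j+1)/|c_i-c_j|}$ is precisely the quantity appearing in the hypothesis. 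Granting this, I would let $w^j$ be the orthogonal projection of $\gamma^j$ onto $W=\bigoplus_i T^*_{c_i}\l2(-N_i,N_i)$; since the $\alpha_i^j$ are least squares solutions, $w^j=\sum_i T^*_{c_i}\alpha_i^j$, so $w^1-w^0=\sum_i T^*_{c_i}(\alpha_i^1-\alpha_i^0)$ with $\alpha_i^1-\alpha_i^0\in\l2(-N_i,N_i)$. Applying the splitting estimate to $\gamma=w^1-w^0$ and using the Pythagorean identity $\|\gamma^1-\gamma^0\|_2^2 = \|w^1-w^0\|_2^2+\|w^1_\perp-w^0_\perp\|_2^2 \ge \|w^1-w^0\|_2^2$ finishes the proof; this is exactly steps \eqref{eq:proofL2SplitLS-1}--\eqref{eq:proofL2SplitLS-4}, so the only genuinely new work is the splitting estimate.

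To prove the splitting estimate I would expand $\|\gamma\|_2^2 = \sum_k\|\alpha_k\|_2^2 + \sum_{k\ne l}\langle T^*_{c_k}\alpha_k, T^*_{c_l}\alpha_l\rangle$ and use unitarity of $T_c$ to rewrite each cross term as $\langle T^*_{c_k}\alpha_k, T^*_{c_l}\alpha_l\rangle = \langle\alpha_k, T_{c_k-c_l}\alpha_l\rangle$. The separation hypothesis $|c_k-c_l|>2(N_k+N_l+1)$ then lets me invoke theorem~\ref{th:Uncertainty3} to bound each such term in modulus by $d_{kl}\|\alpha_k\|_2\|\alpha_l\|_2$ with $d_{kl}:=\sqrt{(2N_k+1)(2N_l+1)}/|c_k-c_l|^{\half}$, so that writing $a_k:=\|\alpha_k\|_2$ I obtain $\|\gamma\|_2^2 \ge \sum_k a_k^2 - \sum_{k\ne l}d_{kl}a_k a_l$. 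Since $\rho_i=\sum_{j\ne i}d_{ij}$, the desired bound $\|\alpha_i\|_2^2\le(1-\rho_i)^{-1}\|\gamma\|_2^2$ reduces to the purely scalar inequality $\sum_k a_k^2 - \sum_{k\ne l}d_{kl}a_ka_l \ge (1-\rho_i)a_i^2$ for every $a\in\R^I$.

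This last quadratic form inequality is the crux, and it is exactly where the naive estimates fall short: the coupled scalar system $a_i\le\|\gamma\|_2+\sum_{j\ne i}d_{ij}a_j$ loses an extra power and yields only a factor like $(1-\rho_i)^{-2}$, while bounding the smallest eigenvalue of the Gram form globally (e.g.\ by a row-sum argument) replaces $\rho_i$ by $\max_j\rho_j$. To recover the sharp per-index factor I would single out the index $i$, apply the arithmetic-geometric inequality $d_{kl}a_ka_l\le\half d_{kl}(a_k^2+a_l^2)$ only to the cross terms with $k,l\ne i$, giving $\sum_{k\ne l,\,k,l\ne i}d_{kl}a_ka_l \le \sum_{j\ne i}(\rho_j-d_{ij})a_j^2$, and then regroup. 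Using $\rho_i a_i^2=\sum_{j\ne i}d_{ij}a_i^2$, the difference $\sum_k a_k^2 - \sum_{k\ne l}d_{kl}a_ka_l-(1-\rho_i)a_i^2$ collapses into the single sum $\sum_{j\ne i}\bigl[(1-\rho_j+d_{ij})a_j^2 - 2d_{ij}a_ia_j + d_{ij}a_i^2\bigr]$.

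The proof then closes by observing that each summand, viewed as a quadratic in $a_j$, has positive leading coefficient $1-\rho_j+d_{ij}$ and discriminant $4d_{ij}^2a_i^2 - 4(1-\rho_j+d_{ij})d_{ij}a_i^2 = -4d_{ij}(1-\rho_j)a_i^2 \le 0$, so each summand is nonnegative and the scalar inequality holds. I expect this regrouping-and-discriminant step to be the main obstacle, and I would emphasize that it consumes the full strength of the hypothesis $\rho_j<1$ for \emph{all} $j$ (not merely $j=i$): the nonnegativity of the discriminant for the term indexed by $j$ uses $1-\rho_j\ge 0$, which is why the theorem requires the positivity condition to hold for every component simultaneously.
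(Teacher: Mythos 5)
Your proof is correct, and its skeleton is the paper's own: the least squares reduction via orthogonal projection onto $W=\bigoplus_{i=1}^I T^*_{c_i}\l2(-N_i,N_i)$ together with the Pythagorean inequality is exactly steps \eqref{eq:proofL2SplitLS-1}--\eqref{eq:proofL2SplitLS-4}, and the expansion of $\|w^1-w^0\|_2^2$ with theorem~\ref{th:Uncertainty3} bounding the cross terms is precisely the first two lines of \eqref{eq:proofL2SplitMultipleLS}. Where you diverge is only the final scalar inequality, and there your motivating claim is mistaken. The paper handles that step in one stroke: apply the arithmetic--geometric mean bound $d_{kl}a_ka_l\le\half d_{kl}(a_k^2+a_l^2)$ to \emph{every} cross term, use the symmetry $d_{kl}=d_{lk}$ to regroup the result as $\sum_k\rho_k a_k^2$, and conclude $\|w^1-w^0\|_2^2\ge\sum_k(1-\rho_k)a_k^2$; since the hypothesis makes every summand nonnegative, dropping all terms except the $i$-th already yields the per-index factor $(1-\rho_i)^{-1}$. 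So a global AM--GM argument \emph{does} produce the per-index constant --- what fails to produce it is a uniform bound by the smallest eigenvalue or by $\max_j\rho_j$, which the paper never uses. Your alternative (partial AM--GM on the pairs avoiding $i$, then the discriminant argument showing $(1-\rho_j+d_{ij})a_j^2-2d_{ij}a_ia_j+d_{ij}a_i^2\ge0$ because its discriminant equals $-4d_{ij}(1-\rho_j)a_i^2\le0$) is valid, and, like the paper's argument, it consumes the hypothesis $\rho_j<1$ for all $j$, not merely $j=i$; it is simply more work for the same constant.
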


\begin{proof}
  As in \eqref{eq:proofL2SplitLS-1}, we decompose each $\gamma^{j}$ 
  \begin{equation*}
    \gamma^{j} \,=\, w^{j} + w^{j}_{\perp} \,,
  \end{equation*}
  where each $w^{j}$ belongs to the subspace
  \begin{equation*}
    W \,=\, \bigoplus_{i=1}^I T^*_{c_i} \l2(-N_i,N_i)
  \end{equation*}
  and each $w^{j}_{\perp}$ is orthogonal to $W$. 
  Subtracting gives 
  \begin{equation*}
    w^1-w^0 
    \,=\, \sum_{i=1}^I T^*_{c_i} (\alpha_i^1-\alpha_i^0) 
  \end{equation*}
  and applying \eqref{eq:Uncertainty3} shows that
  \begin{equation}
    \label{eq:proofL2SplitMultipleLS}
    \begin{split}
      \|w^1-w^0\|_2^2
      &\,\geq\, \sum_{i=1}^I \|\alpha_i^1-\alpha_i^0\|_2^2 
      - \sum_{i=1}^I\sum_{j\not=i}
      |\langle\alpha_i^1-\alpha_i^0,T^*_{c_j-c_i}(\alpha_j^1-\alpha_j^0)\rangle|\\
      &\,\geq\, \sum_{i=1}^I \|\alpha_i^1-\alpha_i^0\|_2^2 
      - \sum_{i=1}^I\sum_{j\not=i}
      \biggl(\Bigl(\frac{r_i r_j}{|c_{ij}|}\Bigr)^{\frac14}\|\alpha_i^1-\alpha_i^0\|_2
      \Bigl(\frac{r_i r_j}{|c_{ij}|}\Bigr)^{\frac14}\|\alpha_j^1-\alpha_j^0\|_2
      \biggr)\\
      &\,\geq\, \sum_{i=1}^I \|\alpha_i^1-\alpha_i^0\|_2^2 
      - \frac12 \biggl( \sum_{i=1}^I \sum_{j\not=i} 
      \biggl( \Bigl(\frac{r_ir_j}{|c_{ij}|}\Bigr)^{\frac12}\|\alpha_i^1-\alpha_i^0\|_2^2
      + \Bigl(\frac{r_ir_j}{|c_{ij}|}\Bigr)^{\frac12}\|\alpha_j^1-\alpha_j^0\|_2^2 \biggr)\\
      &\,=\, \sum_{i=1}^I \|\alpha_i^1-\alpha_i^0\|_2^2 
      \biggl( 1 - 
      \sum_{j\not=i}\Bigl(\frac{r_i r_j}{|c_{ij}|}\Bigr)^{\frac12}
      \biggr) \,,
    \end{split}
  \end{equation}
  where $r_i = 2N_i+1$, $r_j = 2N_j+1$ and $c_{ij} = c_i-c_j$. 
  Since \eqref{eq:proofL2SplitLS-4} is true here as well this ends the proof.
\end{proof}

We may include a missing data component as well.

\begin{theorem}
  \label{th:L2CompleteAndSplitMultipleLS}
  Suppose that $\gamma^0,\gamma^1\in\L2(S^1)$, $c_i\in\R^2$,
  $N_i\in\N$, $i=1,\ldots,I$, and $\Omega\tm\L2(S^1)$ such that 
  $|c_{i}-c_{j}|>2(N_{i}+N_{j}+1)$ for every  $i\ne j$ and
  \begin{align*}
    \sqrt{\frac{|\Omega|}{2\pi}} \sum_{i=1}^{I}\sqrt{2N_{i}+1} 
    &\,<\, 1 \,,\\
    \sqrt{2N_{i}+1} \biggl( \sqrt{\frac{|\Omega|}{2\pi}} 
    + \sum_{j\ne i}\sqrt{\frac{2N_{i}+1}{|c_i-c_j|}} \biggr) 
    &\,<\, 1 \qquad \text{for each } i \,,
  \end{align*}
  and let
  \begin{subequations}
    \label{eq:L2CompleteAndSplitMultipleLS-LSP}
    \begin{align}
      \gamma^{0} &\,\mathop{=}\limits^{LS}\, \beta^{0} +
      \sum_{i=1}^{I}T^*_{c_{i}}\alpha_{i}^{0} \,,
      &\alpha_{i}^{0}\in\l2(-N_{i},N_{i}) \text{ and }
      \beta^{0}\in\L2(\Omega) \,,\\
      \gamma^{1} &\,\mathop{=}\limits^{LS}\, \beta^{1} +
      \sum_{i=1}^{I}T^*_{c_{i}}\alpha_{i}^{1} \,,
      &\alpha_{i}^{1}\in\l2(-N_{i},N_{i}) \text{ and }
      \beta^{0}\in\L2(\Omega) \,.
    \end{align}
  \end{subequations}
  Then
  \begin{align*}
    \|\beta^{1}-\beta^{0}\|_2^{2} &\,\le\,
    \biggl(1-\sqrt{\frac{|\Omega|}{2\pi}}
    \sum_{i}\sqrt{2N_{i}+1}\biggr)^{-1}
    \|\gamma^{1}-\gamma^{0}\|_2^2\\\noalign{and, for $i=1,\ldots,I$}
    \|\alpha_{i}^{1}-\alpha_{i}^{0}\|_2^{2} &\,\le\,
    \biggl(1-\sqrt{2N_{i}+1} \biggl(\sqrt{\frac{|\Omega|}{2\pi}} +
    \sum_{j\ne i}\sqrt{\frac{2N_{i}+1}{|c_i-c_j|}} \biggr) \biggr)^{-1}
    \|\gamma^{1}-\gamma^{0}\|_2^2 \,.
  \end{align*}
\end{theorem}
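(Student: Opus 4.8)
The plan is to run the same orthogonal-projection argument used in the proofs of theorem~\ref{th:L2SplitMultipleLS} and theorem~\ref{th:L2CompleteLS}, but now with the missing-data block $\L2(\Omega)$ and the $I$ translated source blocks $T^*_{c_i}\l2(-N_i,N_i)$ appearing \emph{simultaneously}. As in \eqref{eq:proofL2SplitLS-1}, I would write $\gamma^{j}=w^{j}+w^{j}_{\perp}$, where $w^{j}$ lies in the subspace $W=\L2(\Omega)\oplus\bigoplus_{i=1}^{I}T^*_{c_i}\l2(-N_i,N_i)$ and $w^{j}_{\perp}\perp W$, so that the least squares property gives $w^{j}=\beta^{j}+\sum_{i}T^*_{c_i}\alpha_i^{j}$. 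Subtracting and abbreviating $b:=\beta^{1}-\beta^{0}$ and $a_i:=\alpha_i^{1}-\alpha_i^{0}$, I get $w^{1}-w^{0}=b+\sum_{i}T^*_{c_i}a_i$, and expanding $\|w^{1}-w^{0}\|_2^{2}$ produces, besides the diagonal terms $\|b\|_2^{2}$ and $\sum_i\|a_i\|_2^{2}$, two families of cross terms: the source--source terms $\langle T^*_{c_i}a_i,T^*_{c_j}a_j\rangle=\langle a_i,T^*_{c_j-c_i}a_j\rangle$ for $i\ne j$, and the data--source terms $\langle b,T^*_{c_i}a_i\rangle$.

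The two uncertainty principles then control the two families separately. For the source--source terms I would apply theorem~\ref{th:Uncertainty3} exactly as in \eqref{eq:proofL2SplitMultipleLS}, using $a_i\in\l2(-N_i,N_i)$ and the separation $|c_i-c_j|>2(N_i+N_j+1)$, to bound each by $\sqrt{(2N_i+1)(2N_j+1)/|c_i-c_j|}\,\|a_i\|_2\|a_j\|_2$. For the data--source terms I would instead invoke theorem~\ref{th:FourierUncertainty}: since $b\in\L2(\Omega)$ has $\|b\|_{\L0}\le|\Omega|$ and $a_i$ has $\|a_i\|_{\l0}\le 2N_i+1$, one gets $|\langle b,T^*_{c_i}a_i\rangle|\le\sqrt{(2N_i+1)|\Omega|/(2\pi)}\,\|b\|_2\|a_i\|_2$. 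Symmetrizing every cross term by the weighted arithmetic--geometric mean inequality (exactly the step $xy\le\tfrac12(x^2+y^2)$ used in \eqref{eq:proofL2SplitMultipleLS}) collects the bound into the separated quadratic form
\begin{equation*}
  \|w^{1}-w^{0}\|_2^{2}
  \;\ge\;
  \|b\|_2^{2}\Bigl(1-\sqrt{\tfrac{|\Omega|}{2\pi}}\sum_{i}\sqrt{2N_i+1}\Bigr)
  +\sum_{i=1}^{I}\|a_i\|_2^{2}\Bigl(1-\sqrt{2N_i+1}\bigl(\sqrt{\tfrac{|\Omega|}{2\pi}}+\sum_{j\ne i}\sqrt{\tfrac{2N_j+1}{|c_i-c_j|}}\bigr)\Bigr)\,.
\end{equation*}

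Under the two positivity hypotheses every bracketed coefficient is nonnegative, so I would drop all terms but one: keeping only the $\|b\|_2^{2}$ term yields the stated estimate for $\beta^{1}-\beta^{0}$, while keeping only the $i$-th $\|a_i\|_2^{2}$ term yields the estimate for $\alpha_i^{1}-\alpha_i^{0}$. The proof then closes exactly as in \eqref{eq:proofL2SplitLS-4}, since $\|\gamma^{1}-\gamma^{0}\|_2^{2}=\|w^{1}-w^{0}\|_2^{2}+\|w^{1}_{\perp}-w^{0}_{\perp}\|_2^{2}\ge\|w^{1}-w^{0}\|_2^{2}$. The one place demanding care is the bookkeeping of the arithmetic--geometric splitting: the data--source terms must be distributed so that each $s_i:=\sqrt{(2N_i+1)|\Omega|/(2\pi)}$ lands \emph{both} in the $\|b\|_2^{2}$ coefficient (summed over $i$) \emph{and} in the $i$-th $\|a_i\|_2^{2}$ coefficient, which is precisely what produces the asymmetry between the two final inequalities; matching this to the coefficients in the statement (with the translation contribution carrying $2N_j+1$, as in theorem~\ref{th:L2SplitMultipleLS}) is the main thing to verify.
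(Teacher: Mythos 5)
Your proposal is correct and follows essentially the same route as the paper's own (very terse) proof: the same decomposition of $\gamma^j$ onto $W=\L2(\Omega)\oplus\bigoplus_{i=1}^I T^*_{c_i}\l2(-N_i,N_i)$, theorem~\ref{th:Uncertainty3} for the source--source cross terms, theorem~\ref{th:FourierUncertainty} for the data--source terms, the arithmetic--geometric symmetrization of \eqref{eq:proofL2SplitMultipleLS}, and \eqref{eq:proofL2SplitLS-4} to close. Your bookkeeping, which puts $2N_j+1$ under the square root in the translation sum of the coefficient of $\|\alpha_i^1-\alpha_i^0\|_2^2$, is indeed what the argument yields (consistent with theorem~\ref{th:L2SplitMultipleLS}); the $2N_i+1$ written there in the theorem statement is evidently a typo.
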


\begin{proof}
  As in \eqref{eq:proofL2SplitLS-1}, we decompose each $\gamma^{j}$ 
  \begin{equation*}
    \gamma^{j} \,=\, w^{j} + w^{j}_{\perp} \,,
  \end{equation*}
  where each $w^{j}$ belongs to the subspace
  \begin{equation*}
    W \,=\, L^2(\Omega) \oplus \bigoplus_{i=1}^J T^*_{c_i} \l2(-N_i,N_i)
  \end{equation*}
  and each $w^{j}_{\perp}$ is orthogonal to $W$. 
  Subtracting gives 
  \begin{equation*}
    w^1-w^0 
    \,=\, \beta^1-\beta^0 + \sum_{i=1}^I T^*_{c_i} (\alpha_i^1-\alpha_i^0) 
  \end{equation*}
  and thus
  \begin{equation*}
    \begin{split}
      \|w^1-w^0\|_2^2
      &\,\geq\, \|\beta^1-\beta^0\|_2^2
      -2\sum_{i=1}^I|\langle T^*_{c_i}(\alpha_i^1-\alpha_i^0),\beta^1-\beta^0\rangle|\\
      &\phantom{\,\geq\,}
      +\sum_{i=1}^I\|\alpha_i^1-\alpha_i^0\|_2^2 
      -\sum_{i=1}^I\sum_{j\not= i}
      |\langle\alpha_i^1-\alpha_i^0,T^*_{c_j-c_i}(\alpha_j^1-\alpha_j^0)\rangle| \,.
    \end{split}
  \end{equation*}
  Proceeding as in \eqref{eq:proofL2SplitMultipleLS}, using
  \eqref{eq:Uncertainty3} and \eqref{eq:FourierUncertainty}, and applying
  \eqref{eq:proofL2SplitLS-4} finishes the proof.
\end{proof}

\section{\l1 corollaries of the uncertainty principle}
\label{sec:L1Corollaries}
The results below are analogous to those in the previous section. 
The main difference is that they do not require the \emph{a priori}
knowledge of the size of the non-evanescent subspaces (the  $N_{i}$ in
theorems~\ref{th:L2SplitLS} through
\ref{th:L2CompleteAndSplitMultipleLS}).

In theorem \ref{th:L1Split} below, $\gamma^{0}$
represents the (measured) approximate far field; the
$\alpha_{i}^{0}$ are the non-evanescent parts of the
true (unknown) far fields radiated by each of the two
components, which we assume are well-separated
\eqref{eq:L1SplitPositivity}.  The constant $\delta_{0}$
in \eqref{eq:L1SplitNoisy} accounts for both the noise and
the evanescent components of the true far
fields. Condition \eqref{eq:L1SplitBoundDelta} requires
that the optimization problem \eqref{eq:L1SplitMin} be
formulated with a constraint that is weak enough so that the
$\alpha_{i}^{0}$ are feasible.

\begin{theorem}
  \label{th:L1Split}
  Suppose that $\gamma^0,\alpha_1^0,\alpha_2^0\in\L2(S^1)$ and
  $c_1,c_2\in\R^2$ such that
  \begin{equation}
    \label{eq:L1SplitPositivity}
    \frac{4\|\alpha_{i}^{0}\|_{\l0}}{|c_1-c_2|^{\third}} \,<\, 1
    \qquad \text{for each } i
  \end{equation}
  and
  \begin{equation}
    \label{eq:L1SplitNoisy}
    \|\gamma^{0}-T^*_{c_1}\alpha_1^{0}-T^*_{c_2}\alpha_2^{0}\|_2
    \,\le\, \delta_0 \qquad \text{for some } \delta_0\geq0 \,.
  \end{equation}
  If $\delta\geq0$ and $\gamma\in\L2(S^1)$ with 
  \begin{equation}
    \label{eq:L1SplitBoundDelta}
    \delta \,\ge\, \delta_0+\|\gamma-\gamma^{0}\|_2
  \end{equation}
  and
  \begin{equation}
    \label{eq:L1SplitMin}
    (\alpha_1,\alpha_2) 
    =\Argmin \|\alpha_1\|_{\l1}+\|\alpha_2\|_{\l1}
    \quad \text{s.t.} \quad 
    {\|\gamma-T^*_{c_1}\alpha_1-T^*_{c_2}\alpha_2\|_2\le\delta}\,,\;
    \alpha_1,\alpha_2 \in \L2(S^1) \,,
  \end{equation}
  then, for $i=1,2$
  \begin{equation}
    \label{eq:L1Split}
    \|\alpha^{0}_{i}-\alpha_{i}\|_2^{2} 
    \,\le\,
    \Bigl(1-\frac{4\|\alpha_{i}^{0}\|_{\l0}}{|c_1-c_2|^{\third}}\Bigr)^{-1} 
    4\delta^{2} \,.
  \end{equation}
\end{theorem}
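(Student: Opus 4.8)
The plan is to read \eqref{eq:L1SplitMin} as a basis-pursuit denoising problem for the pair of ``dictionaries'' $T^*_{c_1}$ and $T^*_{c_2}$, and to combine the standard $\l1$-minimality (cone) argument with the bilinear estimate that underlies theorem~\ref{th:Uncertainty2}. First I would verify that the true components $(\alpha_1^0,\alpha_2^0)$ are feasible: by the triangle inequality together with \eqref{eq:L1SplitNoisy} and \eqref{eq:L1SplitBoundDelta}, $\|\gamma - T^*_{c_1}\alpha_1^0 - T^*_{c_2}\alpha_2^0\|_2 \le \|\gamma-\gamma^0\|_2 + \delta_0 \le \delta$, so the feasible set is nonempty and the minimizer obeys $\|\alpha_1\|_{\l1}+\|\alpha_2\|_{\l1} \le \|\alpha_1^0\|_{\l1}+\|\alpha_2^0\|_{\l1}$. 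Writing $h_i := \alpha_i-\alpha_i^0$ and letting $S_i$ be the $\l0$-support of $\alpha_i^0$ (so $|S_i|=\|\alpha_i^0\|_{\l0}$), I would split each $\|\alpha_i^0+h_i\|_{\l1}$ over $S_i$ and its complement and apply the reverse triangle inequality on $S_i$ to obtain the (joint) cone condition $\sum_i \|h_i|_{S_i^c}\|_{\l1} \le \sum_i \|h_i|_{S_i}\|_{\l1}$; with Cauchy--Schwarz on $S_i$ this gives $\sum_i \|h_i\|_{\l1} \le 2\sum_i \|h_i|_{S_i}\|_{\l1} \le 2\sum_i \sqrt{\|\alpha_i^0\|_{\l0}}\,\|h_i\|_2$.

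Next, since both $(\alpha_1,\alpha_2)$ and $(\alpha_1^0,\alpha_2^0)$ satisfy the constraint, subtracting gives $\|T^*_{c_1}h_1+T^*_{c_2}h_2\|_2 \le 2\delta$. Expanding the square exactly as in \eqref{eq:proofSplit2-1} and using that $\Tc$ is unitary, I would bound the cross term by H\"older's inequality and lemma~\ref{lmm:TcEstimate} in its $\l1\to\l\infty$ form (this is the estimate proved en route to theorem~\ref{th:Uncertainty2}, but kept at the level of $\l1$ norms rather than passed to $\l0$): $|\langle h_1, T^*_{c_2-c_1}h_2\rangle| \le |c_1-c_2|^{-\third}\|h_1\|_{\l1}\|h_2\|_{\l1}$. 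This yields
\[ 4\delta^2 \,\ge\, \|h_1\|_2^2 + \|h_2\|_2^2 - \frac{2}{|c_1-c_2|^{\third}}\|h_1\|_{\l1}\|h_2\|_{\l1}. \]

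The final step is to feed the cone bound into this inequality. Estimating $\|h_1\|_{\l1}\|h_2\|_{\l1} \le \tfrac14(\|h_1\|_{\l1}+\|h_2\|_{\l1})^2 \le (\sqrt{\|\alpha_1^0\|_{\l0}}\,\|h_1\|_2 + \sqrt{\|\alpha_2^0\|_{\l0}}\,\|h_2\|_2)^2$, then absorbing the mixed term by $2\sqrt{\|\alpha_1^0\|_{\l0}\|\alpha_2^0\|_{\l0}}\,\|h_1\|_2\|h_2\|_2 \le \|\alpha_1^0\|_{\l0}\|h_1\|_2^2 + \|\alpha_2^0\|_{\l0}\|h_2\|_2^2$, I would arrive at
\[ 4\delta^2 \,\ge\, \|h_1\|_2^2\Bigl(1-\tfrac{4\|\alpha_1^0\|_{\l0}}{|c_1-c_2|^{\third}}\Bigr) + \|h_2\|_2^2\Bigl(1-\tfrac{4\|\alpha_2^0\|_{\l0}}{|c_1-c_2|^{\third}}\Bigr). \]
By the positivity hypothesis \eqref{eq:L1SplitPositivity} both coefficients are positive, so dropping the nonnegative $j\ne i$ summand yields \eqref{eq:L1Split}.

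The hard part is conceptual rather than computational: the error $h_i$ is \emph{not} sparse, since the minimizer $\alpha_i$ may have arbitrarily large support, so \eqref{eq:Uncertainty2} cannot be applied to $h_i$ directly. The remedy is to route the cross term through the scale-free $\l1\to\l\infty$ bound of lemma~\ref{lmm:TcEstimate} and then to control the resulting $\l1$ norms using the cone condition. A second delicate point is that $\l1$-minimality supplies only the \emph{joint} cone condition, not a per-component one; the clean per-index conclusion nonetheless survives because, after expanding the square $(\sqrt{\|\alpha_1^0\|_{\l0}}\|h_1\|_2+\sqrt{\|\alpha_2^0\|_{\l0}}\|h_2\|_2)^2$, the mixed term can be symmetrically absorbed, leaving a diagonal quadratic form whose $i$-th coefficient involves only $\|\alpha_i^0\|_{\l0}$. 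Careful bookkeeping of the constants through these two AM--GM steps is exactly where the factor $4$ in \eqref{eq:L1SplitPositivity} and \eqref{eq:L1Split} originates.
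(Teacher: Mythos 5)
Your proposal is correct and follows essentially the same route as the paper's own proof: feasibility of $(\alpha_1^0,\alpha_2^0)$ and $\l1$-minimality give the cone condition, the constraint difference gives $\|T^*_{c_1}h_1+T^*_{c_2}h_2\|_2\le 2\delta$, the cross term is bounded via H\"older and the $\l1\to\l\infty$ estimate \eqref{eq:EstTcll} of lemma~\ref{lmm:TcEstimate}, and the cone bound plus Cauchy--Schwarz and AM--GM produce the diagonal quadratic form with the factor $4$. The only differences are cosmetic (you apply Cauchy--Schwarz to the cone condition slightly earlier, and you absorb the mixed term by expanding the square rather than invoking $(a+b)^2\le 2(a^2+b^2)$), so nothing of substance diverges from the paper.
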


\begin{proof}
  A consequence of \eqref{eq:L1SplitBoundDelta} is that the pair 
  $(\alpha_1^{0},\alpha_2^{0})$ satisfies the constraint in
  \eqref{eq:L1SplitMin}, which implies that 
  \begin{equation}
    \label{eq:ProofL1Split-1}
    \|\alpha_1\|_{\l1} +  \|\alpha_2\|_{\l1}
    \,\le\, \|\alpha_1^{0}\|_{\l1} +  \|\alpha_2^{0}\|_{\l1} 
  \end{equation}
  because $(\alpha_1,\alpha_2)$ is a minimizer.  
  Additionally, with $W_{i}$ representing the $\l0$-support of
  $\alpha^0_{i}$ and $W_{i}^{c}$ its complement,
  \begin{equation}
    \label{eq:ProofL1Split-2}
    \begin{split}
      \|\alpha_i\|_{\l1} 
      &\,=\,\|\alpha_i^0+(\alpha_i-\alpha_i^0)\|_{\l1}\\
      &\,=\,\|\alpha_i^0+(\alpha_i-\alpha_i^0)\|_{\l1(W_i)}
      +\|\alpha_i-\alpha_i^0\|_{\l1(W_i^{c})}\\
      &\,=\,\|\alpha_i^0+(\alpha_i-\alpha_i^0)\|_{\l1(W_i)}
      +\|\alpha_i-\alpha_i^0\|_{\l1} 
      -\|\alpha_i-\alpha_i^0\|_{\l1(W_i)}\\
      &\,\ge\,\|\alpha_i^0\|_{\l1} 
      +\|\alpha_i-\alpha_i^0\|_{\l1}
      -2\|\alpha_i-\alpha_i^0\|_{\l1(W_i)} \,.
    \end{split}
  \end{equation}
  Inserting \eqref{eq:ProofL1Split-2} into \eqref{eq:ProofL1Split-1} yields
  \begin{equation}
    \label{eq:ProofL1Split-3}
    \|\alpha_1-\alpha_1^{0}\|_{\l1}
    +\|\alpha_2-\alpha_2^{0}\|_{\l1}
    \,\le\,2(\|\alpha_1-\alpha_1^{0}\|_{\l1(W_1)}
    +\|\alpha_2-\alpha_2^{0}\|_{\l1(W_2)}) \,.
  \end{equation}
  We now use \eqref{eq:L1SplitBoundDelta} together with
  \eqref{eq:L1SplitNoisy}, the constraint in \eqref{eq:L1SplitMin} and
  the fact that $T^*_{c_1-c_2}$ is an \L2-isometry to obtain
  \begin{equation}
    \label{eq:ProofL1Split-4}
    \begin{split}
      4\delta^{2}
      &\,\ge\,\bigl(\|\gamma-\gamma^{0}\|_2+\delta_0+\delta\bigl)^2\\
      &\,\ge\,\bigl(\|\gamma-\gamma^0\|_2
      +\|\gamma^0-T^*_{c_1}\alpha_1^0-T^*_{c_2}\alpha_2^0\|_2
      +\|\gamma-T^*_{c_1}\alpha_1-T^*_{c_2}\alpha_2\|_2\bigl)^2\\
      &\,\ge\,\|T^*_{c_1}(\alpha_1-\alpha_1^0)+T^*_{c_2}(\alpha_2-\alpha_2^0)\|_2^2\\
      &\,=\,\|\alpha_1-\alpha_1^0+T^*_{c_2-c_1}(\alpha_2-\alpha_2^0)\|_2^2\\
      &\,\geq\,\|\alpha_1-\alpha_1^0\|_2^2+\|\alpha_2-\alpha_2^0\|_2^2
      -2|\langle\alpha_1-\alpha_1^0,T^*_{c_2-c_1}(\alpha_2-\alpha_2^0)\rangle| \,.
    \end{split}
  \end{equation}
  H\"older's inequality, \eqref{eq:EstTcll}, and
  \eqref{eq:ProofL1Split-3} show
  \begin{equation}
    \label{eq:ProofL1Split-5}
    \begin{split}
      4\delta^2
      &\,\ge\,\|\alpha_1-\alpha_1^0\|_2^2+\|\alpha_2-\alpha_2^0\|_2^2
      -\frac{2}{|c_1-c_2|^{\third}}\|\alpha_1-\alpha_1^0\|_{\l1}
      \|\alpha_2-\alpha_2^0\|_{\l1}\\
      &\,\ge\,\|\alpha_1-\alpha_1^0\|_2^2+\|\alpha_2-\alpha_2^0\|_2^2
      -\frac{1}{2|c_1-c_2|^{\third}}\bigl(\|\alpha_1-\alpha_1^0\|_{\l1}
      +\|\alpha_2-\alpha_2^0\|_{\l1}\bigr)^2\\
      &\,\ge\,\|\alpha_1-\alpha_1^0\|_2^2+\|\alpha_2-\alpha_2^0\|_2^2
      -\frac{2}{|c_1-c_2|^{\third}}\bigl(\|\alpha_1-\alpha_1^0\|_{\l1(W_1)}
      +\|\alpha_2-\alpha_2^0\|_{\l1(W_2)}\bigr)^2 \,.
    \end{split}
  \end{equation}
  Using H\"older's inequality once more yields
  \begin{equation}
    \label{eq:ProofL1Split-6}
    \begin{split}
      4\delta^2
      &\,\ge\,\|\alpha_1-\alpha_1^0\|_2^2+\|\alpha_2-\alpha_2^0\|_2^2
      -\frac{2}{|c_1-c_2|^{\third}}\bigl(|W_1|^{\half}\|\alpha_1-\alpha_1^0\|_2
      +|W_2|^{\half}\|\alpha_2-\alpha_2^0\|_2\bigr)^2\\
      &\,\ge\,\|\alpha_1-\alpha_1^0\|_2^2+\|\alpha_2-\alpha_2^0\|_2^2
      -\frac{4}{|c_1-c_2|^{\third}}\bigl(|W_1|\|\alpha_1-\alpha_1^0\|_2^2
      +|W_2|\|\alpha_2-\alpha_2^0\|_2^2\bigr) \,,
    \end{split}
  \end{equation}
  which implies \eqref{eq:L1Split} because  $|W_{i}|=\|\alpha_{i}^{0}\|_{\l0}$. 
\end{proof}

Assuming that some a priori information on the size of the
non-evanescent subspaces is available and that the distances between
the source components is large relative to their dimensions, we can
improve the dependence of the stability estimates on the distances.

\begin{corollary}
  \label{th:L1SplitWS}
  If we add to the hypothesis of theorem~\ref{th:L1Split}:
  \begin{equation*}
    \alpha_i^0,\alpha_{i}\in\l2(-N_{i},N_{i}) \qquad \text{and} \qquad
    |c_1-c_2|\,>\,2(N_1+N_2+1)
  \end{equation*}
  for some $N_1,N_2\in\N$ and replace \eqref{eq:L1SplitPositivity}
  with 
  \begin{equation}
    \label{eq:L1SplitPositivityWS}
    \frac{4\|\alpha_{i}^{0}\|_{\l0}}{|c_1-c_2|^{\half}} \,<\, 1
    \qquad \text{for each } i
  \end{equation}
  then, for $i=1,2$
  \begin{equation}
    \label{eq:L1SplitWS}
    \|\alpha^{0}_{i}-\alpha_{i}\|_2^{2} 
    \,\le\,
    \Bigl(1-\frac{4\|\alpha_{i}^{0}\|_{\l0}}{|c_1-c_2|^{\half}}\Bigr)^{-1} 
    4\delta^{2} \,.
  \end{equation}
\end{corollary}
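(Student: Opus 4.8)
The plan is to rerun the proof of Theorem~\ref{th:L1Split} essentially verbatim, changing only the single place where the crude translation estimate \eqref{eq:EstTcll} is invoked. The added hypotheses are precisely what is needed to replace \eqref{eq:EstTcll} by the sharper bound \eqref{eq:ProofUncertainty3-2}, and propagating this one substitution through the argument turns every occurrence of $|c_1-c_2|^{\third}$ into $|c_1-c_2|^{\half}$, producing \eqref{eq:L1SplitWS} under the modified positivity condition \eqref{eq:L1SplitPositivityWS}. Here the minimization \eqref{eq:L1SplitMin} is understood to carry the extra band-limiting constraint $\alpha_i\in\l2(-N_i,N_i)$, so that the minimizer inherits this localization; since $\alpha_i^0\in\l2(-N_i,N_i)$ as well, the feasible pair $(\alpha_1^0,\alpha_2^0)$ still lies in the constrained admissible set.

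First I would note that the entire first half of the proof of Theorem~\ref{th:L1Split} is independent of $|c_1-c_2|$. Feasibility of $(\alpha_1^0,\alpha_2^0)$ still follows from \eqref{eq:L1SplitBoundDelta} and \eqref{eq:L1SplitNoisy}, so the minimality inequality \eqref{eq:ProofL1Split-1} holds, and the cone-type estimate \eqref{eq:ProofL1Split-3} follows unchanged. Likewise, the algebraic chain in \eqref{eq:ProofL1Split-4}, which bounds $4\delta^2$ from below by $\|\alpha_1-\alpha_1^0\|_2^2+\|\alpha_2-\alpha_2^0\|_2^2-2|\langle\alpha_1-\alpha_1^0,T^*_{c_2-c_1}(\alpha_2-\alpha_2^0)\rangle|$, uses only the triangle inequality and the fact that $T^*_{c_2-c_1}$ is an \L2-isometry, so it carries over verbatim.

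The one substantive modification occurs at the passage \eqref{eq:ProofL1Split-5}, where the original proof controls the cross term by H\"older's inequality together with \eqref{eq:EstTcll}. Under the new hypotheses both differences $\alpha_i-\alpha_i^0$ lie in $\l2(-N_i,N_i)$, being differences of elements of that space, and $|c_1-c_2|>2(N_1+N_2+1)$; these are exactly the conditions required by Theorem~\ref{th:Uncertainty3}. I would therefore replace \eqref{eq:EstTcll} by the restricted operator-norm estimate $\|T^*_{c_2-c_1}\|_{\l1[-N_2,N_2],\l\infty[-N_1,N_1]}\le|c_1-c_2|^{-\half}$ from \eqref{eq:ProofUncertainty3-2}, which sharpens the cross-term bound to $|c_1-c_2|^{-\half}\|\alpha_1-\alpha_1^0\|_{\l1}\|\alpha_2-\alpha_2^0\|_{\l1}$.

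From this point the argument is purely mechanical: the symmetrization and the second application of H\"older's inequality in \eqref{eq:ProofL1Split-5}--\eqref{eq:ProofL1Split-6}, together with the identity $|W_i|=\|\alpha_i^0\|_{\l0}$, go through with $|c_1-c_2|^{\third}$ replaced everywhere by $|c_1-c_2|^{\half}$, and \eqref{eq:L1SplitPositivityWS} guarantees that the resulting coefficient $1-4\|\alpha_i^0\|_{\l0}/|c_1-c_2|^{\half}$ is positive, yielding \eqref{eq:L1SplitWS}. I do not expect any genuine obstacle: the only point needing care is confirming that the band-limiting $\l2(-N_i,N_i)$ is inherited by the differences $\alpha_i-\alpha_i^0$, so that Theorem~\ref{th:Uncertainty3} applies, and this is immediate since both $\alpha_i$ and $\alpha_i^0$ lie in that space.
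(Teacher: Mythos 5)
Your proposal is correct and follows exactly the paper's own argument: the paper's proof of corollary~\ref{th:L1SplitWS} is literally the instruction to replace \eqref{eq:EstTcll} by \eqref{eq:ProofUncertainty3-2} in \eqref{eq:ProofL1Split-4}--\eqref{eq:ProofL1Split-5}, which is precisely the single substitution you carry out. Your additional observations---that the rest of the proof of theorem~\ref{th:L1Split} is independent of $|c_1-c_2|$, and that the differences $\alpha_i-\alpha_i^0$ inherit the band-limitation to $\l2(-N_i,N_i)$ so that \eqref{eq:ProofUncertainty3-2} applies under the hypothesis $|c_1-c_2|>2(N_1+N_2+1)$---are exactly the details the paper leaves implicit.
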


\begin{proof}
  Replace \eqref{eq:EstTcll} by \eqref{eq:ProofUncertainty3-2} in
  \eqref{eq:ProofL1Split-4}--\eqref{eq:ProofL1Split-5}. 
\end{proof}

The constraint \eqref{eq:L1SplitPositivityWS} in
corollary~\ref{th:L1SplitWS} is much more restrictive than the
corresponding assumption \eqref{eq:L2SplitLSPositivity} in
theorem~\ref{th:L2SplitLS}, and also the estimate \eqref{eq:L1SplitWS}
is weaker than \eqref{eq:L2SplitLS}.
However, if we add to the hypothesis of theorem~\ref{th:L1Split} all 
\emph{a priori} assumptions on the non-evanescent subspaces used in
theorem~\ref{th:L2SplitLS}, the result improves as follows. 

\begin{corollary}
  \label{th:L1SplitWS-apriori}
  If we add to the hypothesis of theorem~\ref{th:L1Split}:
  \begin{equation*}
    \alpha_i^0,\alpha_{i}\in\l2(-N_{i},N_{i}) \qquad \text{and} \qquad
    |c_1-c_2|\,>\,2(N_1+N_2+1)
  \end{equation*}
  for some $N_1,N_2\in\N$ and replace \eqref{eq:L1SplitPositivity} with
  \begin{equation}
    \label{eq:L1SplitPositivityWS-apriori}
    \frac{(2N_1+1)(2N_2+1)}{|c_1-c_2|} \,<\, 1
  \end{equation}
  the conclusion becomes
  \begin{equation}
    \label{eq:L1SplitWS-apriori}
    \|\alpha^{0}_{i}-\alpha_{i}^{1}\|_2^{2} 
    \,\le\,\Bigl(1-\frac{(2N_1+1)(2N_2+1)}{|c_1-c_2|}\Bigr)^{-1} 
    4\delta^{2} \,.
  \end{equation}
\end{corollary}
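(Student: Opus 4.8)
The plan is to reuse the first half of the proof of Theorem~\ref{th:L1Split} unchanged, and then to replace its $\l1$-localization argument by a single direct application of the sharper translation uncertainty principle \eqref{eq:Uncertainty3}. The point is that the added hypothesis $\alpha_i^0,\alpha_i\in\l2(-N_i,N_i)$ forces the differences $\alpha_i-\alpha_i^0$ to lie in $\l2(-N_i,N_i)$ as well, so that the compressed-sensing steps \eqref{eq:ProofL1Split-1}--\eqref{eq:ProofL1Split-3} — which were needed precisely because the supports were unknown — become superfluous and can be bypassed in favour of a finite-dimensional estimate.

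First I would observe that the whole chain \eqref{eq:ProofL1Split-4} carries over verbatim, since it never invokes $\l1$-minimality: it uses only the feasibility constraint ${\|\gamma-T^*_{c_1}\alpha_1-T^*_{c_2}\alpha_2\|_2\le\delta}$ together with \eqref{eq:L1SplitNoisy} and \eqref{eq:L1SplitBoundDelta}, the triangle inequality, and the fact that $T^*_{c_2-c_1}$ is an $\L2$-isometry. Its last line reads
\begin{equation*}
  4\delta^{2}
  \,\ge\, \|\alpha_1-\alpha_1^0\|_2^2+\|\alpha_2-\alpha_2^0\|_2^2
  -2|\langle\alpha_1-\alpha_1^0,T^*_{c_2-c_1}(\alpha_2-\alpha_2^0)\rangle| \,.
\end{equation*}

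Next, writing $c=c_1-c_2$ so that $T^*_{c_2-c_1}=T_c$ with $|c|=|c_1-c_2|>2(N_1+N_2+1)$, and noting $\alpha_1-\alpha_1^0\in\l2(-N_1,N_1)$ and $\alpha_2-\alpha_2^0\in\l2(-N_2,N_2)$, I would apply \eqref{eq:Uncertainty3} directly to the inner product, which gives
\begin{equation*}
  2|\langle\alpha_1-\alpha_1^0,T_c(\alpha_2-\alpha_2^0)\rangle|
  \,\le\, 2\lambda\,\|\alpha_1-\alpha_1^0\|_2\,\|\alpha_2-\alpha_2^0\|_2 \,,
  \qquad
  \lambda:=\frac{\sqrt{(2N_1+1)(2N_2+1)}}{|c_1-c_2|^{\half}} \,.
\end{equation*}
Combining the two displays and completing the square exactly as in the proof of Lemma~\ref{lm:L2SplitWS}, with $a=\|\alpha_1-\alpha_1^0\|_2$ and $b=\|\alpha_2-\alpha_2^0\|_2$, yields
\begin{equation*}
  4\delta^2 \,\ge\, a^2+b^2-2\lambda ab
  \,=\, (1-\lambda^2)a^2+(b-\lambda a)^2
  \,\ge\, (1-\lambda^2)a^2 \,.
\end{equation*}
Since $\lambda^2=(2N_1+1)(2N_2+1)/|c_1-c_2|<1$ by \eqref{eq:L1SplitPositivityWS-apriori}, dividing through gives \eqref{eq:L1SplitWS-apriori} for $i=1$; the symmetric grouping $(1-\lambda^2)b^2+(a-\lambda b)^2$ gives the case $i=2$ with the same constant.

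The only conceptual point — there is no genuine computational obstacle — is recognizing that $\l1$-minimality is not used at all in this setting: the steps \eqref{eq:ProofL1Split-1}--\eqref{eq:ProofL1Split-3} of Theorem~\ref{th:L1Split} exist solely to control the error off the unknown support, and once the a priori dimension constraints of Theorem~\ref{th:L2SplitLS} are in force they may be discarded. Consequently the $\l1$ estimate collapses onto the $\l2$ estimate \eqref{eq:L2SplitLS}, the only difference being that the noisy feasibility set-up produces $4\delta^2$ on the right-hand side in place of $\|\gamma^1-\gamma^0\|_2^2$.
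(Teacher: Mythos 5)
Your proposal is correct and takes essentially the same route as the paper's own proof: the paper likewise proceeds from the feasibility-based chain \eqref{eq:ProofL1Split-4}, applies the sharper uncertainty principle \eqref{eq:Uncertainty3} to the cross term with $r_1r_2/|c_{12}|=\lambda^2$, completes the square, drops the square term, and interchanges the roles of the two indices. Your explicit remark that \l1-minimality (steps \eqref{eq:ProofL1Split-1}--\eqref{eq:ProofL1Split-3}) is never invoked here is a correct observation that is implicit in the paper's proof, which cites only \eqref{eq:ProofL1Split-4}.
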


The constraint \eqref{eq:L1SplitPositivityWS-apriori} in 
corollary~\ref{th:L1SplitWS-apriori} is now the same as the 
corresponding assumption \eqref{eq:L2SplitLSPositivity} in
theorem~\ref{th:L2SplitLS}, but the estimate
\eqref{eq:L1SplitWS-apriori} still differs from \eqref{eq:L2SplitLS}
(after taking the square root on both sides of these inequalities) by
a factor of two. 
However, the main advantage of the \l1 approach is clearly that
no a priori knowledge of the size of the non-evanescent subspaces is
required. 
If such a priori information is available, we recommend using least
squares. 

\begin{proof}[Proof of corollary~\ref{th:L1SplitWS-apriori}]
  Proceeding as in \eqref{eq:ProofL1Split-4} and applying
  \eqref{eq:Uncertainty3}, we find that 
  \begin{equation*}
    \begin{split}
      4\delta^{2}
      &\,\ge\,\|\alpha_1-\alpha_1^0\|_2^2+\|\alpha_2-\alpha_2^0\|_2^2
      -2|\langle\alpha_1-\alpha_1^0,T^*_{c_2-c_1}(\alpha_2-\alpha_2^0)\rangle|\\
      &\,\ge\,\|\alpha_1-\alpha_1^0\|_2^2+\|\alpha_2-\alpha_2^0\|_2^2
      -2\Bigl(\frac{r_1r_2}{|c_{12}|}\Bigr)^{\frac{1}{2}} 
      \|\alpha_1-\alpha_1^0\|_2 \|\alpha_2-\alpha_2^0\|_2\\
      &\,\ge\,\biggl( 1 - \frac{r_1r_2}{|c_{12}|} \biggr) 
      \|\alpha_1-\alpha_1^0\|_2^2 
      + \Biggl( \|\alpha_2-\alpha_2^0\|_2
      -\Bigl(\frac{r_1r_2}{|c_{12}|}\Bigr)^{\frac{1}{2}} 
      \|\alpha_1-\alpha_1^0\|_2 \Biggr)^2 \,,
    \end{split}
  \end{equation*}
  where $r_1 = 2N_1+1$, $r_2 = 2N_2+1$ and $c_{12} = c_1-c_2$.
  Dropping the second term gives \eqref{eq:L1SplitWS} for $\alpha_1$,
  and we may interchange the roles of $\alpha_1$ and $\alpha_2$ in the
  proof to obtain the estimate for $\alpha_2$.
\end{proof}

The analogue of theorem~\ref{th:L2CompleteLS} for data completion but
without \emph{a priori} knowledge on the size of the non-evanescent
subspaces is

\begin{theorem}
  \label{th:L1Complete}
  Suppose that $\gamma^0,\alpha^0\in\L2(S^1)$, $\Omega\tm S^1$, 
  $\beta^0\in\L2(\Omega)$ and $c\in\R^2$ such that 
  \begin{equation*}
    \frac{2\|\alpha^0\|_{\l0}|\Omega|}{\pi} \,<\, 1
  \end{equation*}
  and
  \begin{equation*}
    \|\gamma^{0}-T^*_{c}\alpha^{0}-\beta^{0}\|_2 
    \,\le\, \delta_0 \qquad \text{for some } \delta^0\geq 0 \,.
  \end{equation*}
  If $\delta\geq0$ and $\gamma\in\L2(S^1)$ with 
  \begin{equation*}
    \delta \,\ge\, \delta_0+\|\gamma-\gamma^{0}\|_2 
  \end{equation*}
  and
  \begin{equation*}
    \alpha 
    =\Argmin \|\alpha\|_{\l1}
    \quad \text{s.t.} \quad 
    {\|\gamma-\beta-T^*_{c}\alpha\|_2\le\delta}\,,\;
    \alpha \in \L2(S^1)\,,\; \beta \in L^2(\Omega) \,,
  \end{equation*}
  then
  \begin{subequations}
    \label{eq:L1Complete}
    \begin{align}
      \|\alpha^{0}-\alpha\|_2^{2} 
      \,\le\, \Bigl(1-\frac{2\|\alpha^0\|_{\l0}|\Omega|}
      {\pi}\Bigr)^{-1} 4\delta^{2}\\\noalign{and}
      \|\beta^{0}-\beta\|_2^{2} 
      \,\le\, \Bigl(1-\frac{2\|\alpha^0\|_{\l0}|\Omega|}
      {\pi}\Bigr)^{-1}4\delta^{2} \,.
    \end{align}
  \end{subequations}
\end{theorem}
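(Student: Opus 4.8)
The plan is to follow the template of the proof of theorem~\ref{th:L1Split}, treating the missing-data variable $\beta$ as the analogue of the second source and replacing the translation uncertainty principle \eqref{eq:EstTcll} by the Fourier uncertainty principle of theorem~\ref{th:FourierUncertainty}. First I would check that the pair $(\alpha^0,\beta^0)$ is feasible for the minimization: the triangle inequality together with the noise bound on $\gamma^0$ and the hypothesis on $\delta$ gives $\|\gamma-\beta^0-T^*_c\alpha^0\|_2\le\|\gamma-\gamma^0\|_2+\delta_0\le\delta$. Since the minimizer $(\alpha,\beta)$ minimizes $\|\alpha\|_{\l1}$ over all feasible pairs, this yields $\|\alpha\|_{\l1}\le\|\alpha^0\|_{\l1}$, the analogue of \eqref{eq:ProofL1Split-1} but now involving only the penalized variable $\alpha$. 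Repeating the support/complement decomposition \eqref{eq:ProofL1Split-2}--\eqref{eq:ProofL1Split-3} for $\alpha$ alone then produces $\|\alpha-\alpha^0\|_{\l1}\le 2\|\alpha-\alpha^0\|_{\l1(W)}$, where $W$ is the $\l0$-support of $\alpha^0$.

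Next, exactly as in \eqref{eq:ProofL1Split-4}, I would combine \eqref{eq:L1SplitBoundDelta}, the noise bound, and the constraint in the minimization to get $\|T^*_c(\alpha-\alpha^0)+(\beta-\beta^0)\|_2\le 2\delta$. Because $T^*_c$ is an $\L2$-isometry, expanding the square gives
\begin{equation*}
  4\delta^2 \,\ge\, \|\alpha-\alpha^0\|_2^2 + \|\beta-\beta^0\|_2^2 - 2|\langle T^*_c(\alpha-\alpha^0),\beta-\beta^0\rangle| \,.
\end{equation*}
The cross term is where the Fourier uncertainty enters. Rather than invoking \eqref{eq:FourierUncertainty} directly (which would involve the uncontrolled quantity $\|\alpha-\alpha^0\|_{\l0}$), I would use the intermediate Hölder estimate from the proof of theorem~\ref{th:FourierUncertainty}, namely $|\langle T^*_c u,v\rangle|\le\frac{1}{\sqrt{2\pi}}\|u\|_{\l1}\|v\|_{\L1}$, which follows from \eqref{eq:EstTcLL} and $\|u\|_{\L\infty}\le\frac{1}{\sqrt{2\pi}}\|u\|_{\l1}$.

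The decisive structural point, and the step I expect to require the most care in tracking the constant, is the asymmetry between $\alpha$ and $\beta$. For the $\alpha$ factor I bound $\|\alpha-\alpha^0\|_{\l1}\le 2\|\alpha-\alpha^0\|_{\l1(W)}\le 2\sqrt{\|\alpha^0\|_{\l0}}\,\|\alpha-\alpha^0\|_2$ by the $\ell^1$-minimization inequality above followed by Cauchy--Schwarz, picking up the factor of two from sparsity. For the $\beta$ factor, by contrast, no minimization is needed: since $\beta,\beta^0\in\L2(\Omega)$ the difference is automatically $\L0$-supported in $\Omega$, so Cauchy--Schwarz gives $\|\beta-\beta^0\|_{\L1}\le\sqrt{|\Omega|}\,\|\beta-\beta^0\|_2$ for free. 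Hence the cross term is bounded by $\kappa\|\alpha-\alpha^0\|_2\|\beta-\beta^0\|_2$ with $\kappa=\frac{2}{\sqrt{2\pi}}\sqrt{\|\alpha^0\|_{\l0}|\Omega|}$, so that $\kappa^2=\frac{2\|\alpha^0\|_{\l0}|\Omega|}{\pi}$. Completing the square in the two symmetric ways, $4\delta^2\ge(1-\kappa^2)\|\alpha-\alpha^0\|_2^2+(\|\beta-\beta^0\|_2-\kappa\|\alpha-\alpha^0\|_2)^2$ and likewise with the roles reversed, and discarding the nonnegative remainder, yields both inequalities in \eqref{eq:L1Complete}; positivity of $1-\kappa^2$ is exactly the hypothesis $\frac{2\|\alpha^0\|_{\l0}|\Omega|}{\pi}<1$.
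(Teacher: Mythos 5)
Your proposal is correct and follows essentially the same route as the paper's proof: feasibility of $(\alpha^0,\beta^0)$ giving $\|\alpha\|_{\l1}\le\|\alpha^0\|_{\l1}$, the sparsity bound $\|\alpha-\alpha^0\|_{\l1}\le 2\|\alpha-\alpha^0\|_{\l1(W)}$, the H\"older chain $|\langle T^*_c u,v\rangle|\le\frac{1}{\sqrt{2\pi}}\|u\|_{\l1}\|v\|_{\L1}$ in place of the translation estimate, and completing the square in both symmetric ways. Your tracking of the asymmetry (factor $2$ from sparsity on the $\alpha$ side, automatic $\L0$-support of $\beta-\beta^0$ in $\Omega$ on the other) and of the constant $\kappa^2=\frac{2\|\alpha^0\|_{\l0}|\Omega|}{\pi}$ matches the paper exactly.
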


\begin{proof}
  Proceeding as in \eqref{eq:ProofL1Split-1}--\eqref{eq:ProofL1Split-3}
  we find that
  \begin{equation}
    \label{eq:ProofL1Complete-1}
    \|\alpha-\alpha^{0}\|_{\l1}
    \,\le\, 2\|\alpha-\alpha^{0}\|_{\l1(W)}
  \end{equation}
  with $W$ representing the $\l0$-support of $\alpha^0$.
  Applying similar arguments as in \eqref{eq:ProofL1Split-4} yields
  \begin{equation*}
    \begin{split}
      4\delta^2 
      \,\geq\, \|\alpha-\alpha^0\|_2^2 + \|\beta-\beta^0\|_2^2
      - 2|\langle T^*_c(\alpha-\alpha^0),\beta-\beta^0\rangle| \,.
    \end{split}
  \end{equation*}
  We now use H\"older's inequality, \eqref{eq:DefTc}, the mapping
  properties of the operator which maps $\alpha$ to its Fourier
  coefficients and \eqref{eq:ProofL1Complete-1} to obtain 
  \begin{equation}
    \label{eq:ProofL1Complete-3}
    \begin{split}
      4\delta^2 
      &\,\geq\, \|\alpha-\alpha^0\|_2^2 + \|\beta-\beta^0\|_2^2
      - 2\|T^*_c(\alpha-\alpha^0)\|_{\Linfty}\|\beta-\beta^0\|_{\L1}\\
      &\,=\, \|\alpha-\alpha^0\|_2^2 + \|\beta-\beta^0\|_2^2
      - 2\|\alpha-\alpha^0\|_{\Linfty}\|\beta-\beta^0\|_{\L1}\\
      &\,\geq\, \|\alpha-\alpha^0\|_2^2 + \|\beta-\beta^0\|_2^2
      - \frac{2}{\sqrt{2\pi}} \|\alpha-\alpha^0\|_{\l1}
      \|\beta-\beta^0\|_{\L1}\\
      &\,\geq\, \|\alpha-\alpha^0\|_2^2 + \|\beta-\beta^0\|_2^2
      - \frac{4}{\sqrt{2\pi}} \|\alpha-\alpha^0\|_{\l1(W)}
      \|\beta-\beta^0\|_{\L1}\\
      &\,\geq\, \|\alpha-\alpha^0\|_2^2 + \|\beta-\beta^0\|_2^2
      - \frac{4}{\sqrt{2\pi}} \sqrt{|W|} \|\alpha-\alpha^0\|_2
      \sqrt{|\Omega|} \|\beta-\beta^0\|_2\\
      &\,\geq\, \Bigl( 1 - \frac{2}{\pi} |W||\Omega| \Bigr) 
      \|\alpha-\alpha^0\|_2^2 + \Bigl( \|\beta-\beta^0\|_2 
      - \frac{2}{\sqrt{2\pi}} \sqrt{|W||\Omega|} \|\alpha-\alpha^0\|_2
      \Bigr)^2 \,.
    \end{split}
  \end{equation}
  Dropping the second term gives \eqref{eq:L1Complete} for $\alpha$
  because $|W|=\|\alpha^{0}\|_{\l0}$, and we may interchange the roles
  of $\alpha$ and $\beta$ when completing the square in the last line of
  \eqref{eq:ProofL1Complete-3} to obtain the estimate for~$\beta$. 
\end{proof}

If $\Omega$ is unknown as well, then theorem~\ref{th:L1Complete} can
be adapted as follows.

\begin{corollary}
  \label{th:L1CompleteNO}
  Suppose that $\gamma^0,\alpha^0\in\L2(S^1)$, $\Omega\tm S^1$,
  $\beta^0\in\L2(\Omega)$ and $c\in\R^2$ such that
  \begin{equation*}
    \frac4{\sqrt{2\pi}}\frac1{\tau^2} \|\alpha^0\|_{\l0} \,<\, 1 
    \quad \text{and} \quad
    \frac4{\sqrt{2\pi}}\tau^2|\Omega| \,<\, 1 
    \qquad \text{for some } \tau>0
  \end{equation*}
  and
  \begin{equation*}
    \|\gamma^{0}-T^*_{c}\alpha^{0}-\beta^{0}\|_2 \,\le\, \delta_0 
    \qquad \text{for some } \delta_0\geq0 \,.
  \end{equation*}
  If $\delta\geq0$ and $\gamma\in\L2(S^1)$ with 
  \begin{equation*}
    \delta \,\ge\, \delta_0+\|\gamma-\gamma^{0}\|_2
  \end{equation*}
  and
  \begin{equation*}
    (\alpha, \beta)
    = \Argmin \frac1\tau \|\alpha\|_{\l1} + \tau \|\beta\|_{\L1}
    \quad \text{s.t.} \quad 
    {\|\gamma-T^*_{c}\alpha-\beta\|_2\le\delta}\,,\;
    \alpha, \beta \in \L2(S^1) \,,
  \end{equation*}
  then
  \begin{align*}
    \|\alpha^{0}-\alpha\|_2^{2} 
    \,\le\, \Bigl(1-\frac4{\sqrt{2\pi}}\frac1{\tau^2}
    \|\alpha^0\|_{\l0}\Bigr)^{-1} 4\delta^{2}\\\noalign{and}
    \|\beta^{0}-\beta\|_2^{2} 
    \,\le\, \Bigl(1-\frac4{\sqrt{2\pi}}\tau^2|\Omega|\Bigr)^{-1}
    4\delta^{2} \,.
  \end{align*}
\end{corollary}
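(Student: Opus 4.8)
The plan is to follow the proof of theorem~\ref{th:L1Complete}, but to use the balance parameter $\tau$ so that the two supports contribute \emph{additively} (through $|W|/\tau^2$ and $\tau^2|\Omega|$ separately) rather than through their product. Throughout I write $X=\|\alpha-\alpha^0\|_2$, $Y=\|\beta-\beta^0\|_2$, $a=\|\alpha-\alpha^0\|_{\l1}$, $b=\|\beta-\beta^0\|_{\L1}$, let $W$ be the $\l0$-support of $\alpha^0$ (so $|W|=\|\alpha^0\|_{\l0}$), and recall $\supp\beta^0\tm\Omega$. First I would record the weighted cone condition: as in the proof of theorem~\ref{th:L1Split}, the hypothesis $\delta\ge\delta_0+\|\gamma-\gamma^0\|_2$ makes $(\alpha^0,\beta^0)$ feasible, so minimality gives $\frac1\tau\|\alpha\|_{\l1}+\tau\|\beta\|_{\L1}\le\frac1\tau\|\alpha^0\|_{\l1}+\tau\|\beta^0\|_{\L1}$. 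Splitting each norm over its support and complement exactly as in \eqref{eq:ProofL1Split-2}--\eqref{eq:ProofL1Split-3} (for $\beta$ the $\L1(\Omega)$/$\L1(S^1\setminus\Omega)$ split, which is available since $\supp\beta^0\tm\Omega$), the constant terms cancel and one is left with the single weighted inequality $\frac{a}{\tau}+\tau b\le\frac{2}{\tau}\|\alpha-\alpha^0\|_{\l1(W)}+2\tau\|\beta-\beta^0\|_{\L1(\Omega)}$. Hölder on the finite set $W$ and on $\Omega$ then gives $\frac{a}{\tau}+\tau b\le\frac{2\sqrt{|W|}}{\tau}X+2\tau\sqrt{|\Omega|}\,Y$.

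Next I would reproduce the quadratic estimate. Using the constraint in the minimization together with \eqref{eq:L1SplitNoisy}, \eqref{eq:L1SplitBoundDelta} and the triangle inequality gives $\|T^*_c(\alpha-\alpha^0)+(\beta-\beta^0)\|_2\le2\delta$, exactly as in \eqref{eq:ProofL1Split-4}. Squaring, using that $T^*_c$ is an $\L2$-isometry, and bounding the cross term by H\"older together with \eqref{eq:EstTcLL} and the $\l1\to\Linfty$ mapping property of the inverse Fourier series (precisely the chain in \eqref{eq:ProofL1Complete-3}) yields $4\delta^2\ge X^2+Y^2-\frac{2}{\sqrt{2\pi}}\,ab$.

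The crux is to bound $\frac{2}{\sqrt{2\pi}}ab$ so that $X$ and $Y$ separate without a mixed term. Writing $\phi=\frac{\sqrt{|W|}}{\tau}X$ and $\psi=\tau\sqrt{|\Omega|}\,Y$, the cone condition reads $\frac{a}{\tau}+\tau b\le2(\phi+\psi)$, so by the arithmetic--geometric mean inequality $ab=\bigl(\tfrac{a}{\tau}\bigr)(\tau b)\le\tfrac14\bigl(\tfrac{a}{\tau}+\tau b\bigr)^2\le(\phi+\psi)^2\le2(\phi^2+\psi^2)=\frac{2|W|}{\tau^2}X^2+2\tau^2|\Omega|Y^2$. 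The elementary last step $(\phi+\psi)^2\le2(\phi^2+\psi^2)$ is exactly what discards the mixed $XY$ contribution and leaves the two supports weighted by $1/\tau^2$ and $\tau^2$. Substituting into the quadratic estimate produces the joint bound $\bigl(1-\frac{4}{\sqrt{2\pi}}\frac{|W|}{\tau^2}\bigr)X^2+\bigl(1-\frac{4}{\sqrt{2\pi}}\tau^2|\Omega|\bigr)Y^2\le4\delta^2$. The two hypotheses make both coefficients positive, so discarding the nonnegative $Y$-term gives the estimate for $\alpha$ and discarding the $X$-term gives the estimate for $\beta$, using $|W|=\|\alpha^0\|_{\l0}$.

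I expect the only genuine obstacle to be this cross-term step. The temptation is to bound $ab$ symmetrically as in \eqref{eq:ProofL1Complete-3}, but there one has the separate bound $a\le2\|\alpha-\alpha^0\|_{\l1(W)}$ from minimizing $\|\alpha\|_{\l1}$ alone, which is unavailable here because the weighted objective only yields the \emph{coupled} cone condition; moreover $\beta$ is no longer supported in $\Omega$, so $b$ cannot be controlled by $Y$ directly. The device that resolves both difficulties is the $\tau$-weighting together with $(\phi+\psi)^2\le2(\phi^2+\psi^2)$, which converts the single coupled cone inequality into additive, separated control of $X$ and $Y$ and accounts for the appearance of the two distinct positivity conditions and the factor~$4$.
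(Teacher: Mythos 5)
Your proposal is correct and follows essentially the same route as the paper's own proof: the weighted cone condition $\frac1\tau\|\alpha-\alpha^0\|_{\l1}+\tau\|\beta-\beta^0\|_{\L1}\le 2\bigl(\frac1\tau\|\alpha-\alpha^0\|_{\l1(W)}+\tau\|\beta-\beta^0\|_{\L1(\Omega)}\bigr)$, the quadratic estimate $4\delta^2\ge X^2+Y^2-\frac{2}{\sqrt{2\pi}}ab$, and then AM--GM on $\bigl(\frac{a}{\tau}\bigr)(\tau b)$ followed by H\"older and $(\phi+\psi)^2\le2(\phi^2+\psi^2)$ to separate the two squared errors with coefficients $\frac{4}{\sqrt{2\pi}}\frac{|W|}{\tau^2}$ and $\frac{4}{\sqrt{2\pi}}\tau^2|\Omega|$. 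The only difference is cosmetic (you apply H\"older to the cone condition before the AM--GM step, the paper after), and your identification of the $\tau$-weighting plus the splitting of the square as the key device is exactly what the paper's chain of inequalities does.
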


\begin{proof}
  Proceeding as in \eqref{eq:ProofL1Split-1}--\eqref{eq:ProofL1Split-3}
  we find that
  \begin{equation}
    \label{eq:ProofL1CompleteNO-1}
    \frac1\tau\|\alpha-\alpha^{0}\|_{\l1} + \tau\|\beta-\beta^0\|_{\L1}
    \,\le\, 2 \bigl(\frac1\tau\|\alpha-\alpha^{0}\|_{\l1(W)} 
    + \tau\|\beta-\beta^0\|_{\L1(\Omega)} \bigr)
  \end{equation}
  with $W$ representing the $\l0$-support of $\alpha^0$.
  Applying similar arguments as in
  \eqref{eq:ProofL1Split-4}--\eqref{eq:ProofL1Split-6} and
  \eqref{eq:ProofL1Complete-3} together with
  \eqref{eq:ProofL1CompleteNO-1} yields 
  \begin{equation*}
    \begin{split}
      4\delta^2 
      &\,\geq\, \|\alpha-\alpha^0\|_2^2 + \|\beta-\beta^0\|_2^2
      - \frac{2}{\sqrt{2\pi}} \|\alpha-\alpha^0\|_{\l1}
      \|\beta-\beta^0\|_{\L1}\\
      &\,\geq\, \|\alpha-\alpha^0\|_2^2 + \|\beta-\beta^0\|_2^2
      - \frac1{2\sqrt{2\pi}} \Bigl( \frac1\tau \|\alpha-\alpha^0\|_{\l1}
      + \tau \|\beta-\beta^0\|_{\L1} \Bigr)^2\\
      &\,\geq\, \|\alpha-\alpha^0\|_2^2 + \|\beta-\beta^0\|_2^2
      - \frac2{\sqrt{2\pi}} \Bigl( \frac1\tau \|\alpha-\alpha^0\|_{\l1(W)}
      + \tau \|\beta-\beta^0\|_{\L1(\Omega)} \Bigr)^2\\
      &\,\geq\, \|\alpha-\alpha^0\|_2^2 + \|\beta-\beta^0\|_2^2
      - \frac2{\sqrt{2\pi}} \Bigl( \frac1\tau \sqrt{|W|}
      \|\alpha-\alpha^0\|_2 + \tau \sqrt{|\Omega|}
      \|\beta-\beta^0\|_2 \Bigr)^2\\ 
      &\,\geq\, \|\alpha-\alpha^0\|_2^2 + \|\beta-\beta^0\|_2^2 -
      \frac4{\sqrt{2\pi}} \Bigl( \frac1{\tau^2} |W| \|\alpha-\alpha^0\|_2^2
      + \tau^2 |\Omega| \|\beta-\beta^0\|_2^2 \Bigr) \,.
    \end{split}
  \end{equation*}
  This ends the proof because $|W|=\|\alpha^{0}\|_{\l0}$.
\end{proof}

A possible application of corollary~\ref{th:L1CompleteNO} is the
problem of removing (high-amplitude) strongly localized noise from
measured far field data. 
Next we consider sources supported on sets with multiple disjoint
components. 

\begin{theorem}
  \label{th:L1SplitMultiple}
  Suppose that $\gamma^0,\alpha_i^0\in\L2(S^1)$ and $c_i\in\R^2$,
  $i=1,\ldots,I$ such that
  \begin{equation}
    \label{eq:L1SplitPositivityMultiple}
    \max_{j\not=k}\frac1{|c_k-c_j|^{\third}}4(I-1)\|\alpha_{i}^{0}\|_{\l0} 
    \,<\, 1 \qquad \text{for each } i
  \end{equation}
  and
  \begin{equation*}
    \|\gamma^{0}-\sum_{i=1}^I T^*_{c_i}\alpha_i^{0}\|_2
    \,\le\, \delta_0 \qquad \text{for some } \delta^0\geq 0 \,.
  \end{equation*}
  If $\delta\geq0$ and $\gamma\in\L2(S^1)$ with
  \begin{equation*}
    \delta \,\ge\, \delta_0+\|\gamma-\gamma^{0}\|_2
  \end{equation*}
  and
  \begin{equation}
    \label{eq:L1SplitMinMultiple}
    (\alpha_1,\ldots,\alpha_I) 
    =\Argmin \sum_{i=1}^I \|\alpha_i\|_{\l1}
    \quad \text{s.t.} \quad 
    {\|\gamma-\sum_{i=1}^IT^*_{c_i}\alpha_i\|_2\le\delta}\,,\;
    \alpha_i\in\L2(S^1) \,,
  \end{equation}
  then, for $i=1,\ldots,I$
  \begin{equation*}
    \|\alpha^{0}_{i}-\alpha_{i}\|_2^{2} 
    \,\le\,
    \Bigl(1-\max_{j\not=k}\frac1{|c_k-c_j|^{\third}}4(I-1)\|\alpha_{i}^{0}\|_{\l0}\Bigr)^{-1} 
    4\delta^{2} \,.
  \end{equation*}
\end{theorem}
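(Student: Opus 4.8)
The plan is to follow the two-component argument in the proof of theorem~\ref{th:L1Split} almost verbatim, upgrading the single cross term there to the double sum $\sum_{i\ne j}$ that arises for $I$ components, and then to extract the sharp constant $(I-1)$ from a careful treatment of that double sum. Throughout I would write $d_i:=\alpha_i-\alpha_i^0$, set $M:=\max_{j\ne k}|c_k-c_j|^{-\third}$, and let $W_i$ denote the $\l0$-support of $\alpha_i^0$, so that $|W_i|=\|\alpha_i^0\|_{\l0}$.

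First I would record the two consequences of the hypotheses that drive the proof. Exactly as in \eqref{eq:ProofL1Split-1}--\eqref{eq:ProofL1Split-3}, condition \eqref{eq:L1SplitBoundDelta} makes $(\alpha_1^0,\dots,\alpha_I^0)$ feasible for the minimization \eqref{eq:L1SplitMinMultiple}, so minimality gives $\sum_i\|\alpha_i\|_{\l1}\le\sum_i\|\alpha_i^0\|_{\l1}$, and the per-component support splitting then yields the \emph{global} $\l1$-inequality $\sum_i\|d_i\|_{\l1}\le 2\sum_i\|d_i\|_{\l1(W_i)}$. Second, the triangle inequality applied to the constraint, to the noise bound, and to \eqref{eq:L1SplitBoundDelta} gives $\|\sum_i T^*_{c_i}d_i\|_2\le 2\delta$, just as in \eqref{eq:ProofL1Split-4}. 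Expanding this square, using that each $T^*_{c_i}$ is an \l2-isometry together with $\langle T^*_{c_i}d_i,T^*_{c_j}d_j\rangle=\langle d_i,T^*_{c_j-c_i}d_j\rangle$, and bounding each cross term by H\"older's inequality and \eqref{eq:EstTcll}, I would obtain
\begin{equation*}
  4\delta^2 \,\ge\, \sum_{i=1}^I\|d_i\|_2^2
  - M\sum_{i\ne j}\|d_i\|_{\l1}\|d_j\|_{\l1} \,.
\end{equation*}

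The crux is converting $M\sum_{i\ne j}\|d_i\|_{\l1}\|d_j\|_{\l1}$ into a sum of $\l2$-norms with the correct constant. Writing $a_i:=\|d_i\|_{\l1}$, the one indispensable combinatorial step is
\begin{equation*}
  \sum_{i\ne j} a_i a_j
  \,=\, \Bigl(\sum_i a_i\Bigr)^2 - \sum_i a_i^2
  \,\le\, \frac{I-1}{I}\Bigl(\sum_i a_i\Bigr)^2 \,,
\end{equation*}
where I have used $\sum_i a_i^2\ge\frac1I(\sum_i a_i)^2$. The global $\l1$-inequality then squares to $(\sum_i a_i)^2\le 4(\sum_i\|d_i\|_{\l1(W_i)})^2$, H\"older gives $\|d_i\|_{\l1(W_i)}\le|W_i|^{\half}\|d_i\|_2$, and Cauchy--Schwarz gives $(\sum_i|W_i|^{\half}\|d_i\|_2)^2\le I\sum_i|W_i|\|d_i\|_2^2$. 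Chaining these, the factor $\frac{I-1}{I}$ cancels the $I$ produced by Cauchy--Schwarz, leaving
\begin{equation*}
  M\sum_{i\ne j}\|d_i\|_{\l1}\|d_j\|_{\l1}
  \,\le\, 4(I-1)M\sum_i|W_i|\,\|d_i\|_2^2 \,.
\end{equation*}
Substituting back yields $4\delta^2\ge\sum_i\bigl(1-4(I-1)M|W_i|\bigr)\|d_i\|_2^2$; dropping every summand but the $i$-th (each coefficient being nonnegative by \eqref{eq:L1SplitPositivityMultiple}) gives the claimed estimate, since $|W_i|=\|\alpha_i^0\|_{\l0}$.

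The step I expect to be the main obstacle is precisely this combinatorial estimate, and it is the only place where the argument is genuinely more delicate than for $I=2$: the naive bound $\sum_{i\ne j}a_ia_j\le(\sum_i a_i)^2$ followed by Cauchy--Schwarz would produce the constant $4IM|W_i|$ rather than the sharp $4(I-1)M|W_i|$. Retaining the $-\sum_i a_i^2$ term (equivalently, using $\sum_{i\ne j}a_ia_j\le\frac{I-1}{I}(\sum_i a_i)^2$) is exactly what absorbs the spurious factor of $I$. I would also flag that the \emph{global}, rather than per-component, character of the $\l1$-minimization inequality causes no difficulty here: after the combinatorial step I only ever manipulate $(\sum_i a_i)^2$ and never an individual $a_i^2$, so squaring the single summed inequality suffices—attempting instead to bound each $\|d_i\|_{\l1}^2$ separately would fail, since the minimization controls only the total $\l1$-norm.
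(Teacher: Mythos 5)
Your proposal is correct and follows essentially the same route as the paper: the global $\l1$-minimality inequality, the expansion of $\|\sum_i T^*_{c_i}(\alpha_i-\alpha_i^0)\|_2^2$ with cross terms bounded via H\"older and \eqref{eq:EstTcll}, then the combinatorial bound $\sum_{i\ne j}a_ia_j\le\frac{I-1}{I}(\sum_i a_i)^2$ (which is exactly \eqref{eq:BasicIneq3} of appendix~\ref{app:SumEstimates}, which you rederive inline) combined with \eqref{eq:BasicIneq2} so that the factors $\frac{I-1}{I}$ and $I$ cancel to give the sharp constant $4(I-1)$. Your closing remarks correctly identify the two points where the paper's argument is delicate, namely the retention of the $-\sum_i a_i^2$ term and the fact that only the summed $\l1$-inequality is available from the minimization.
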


\begin{proof}
  Proceeding as in \eqref{eq:ProofL1Split-1}--\eqref{eq:ProofL1Split-3}
  we find that
  \begin{equation}
    \label{eq:ProofL1SplitMultiple-1}
    \sum_{i=1}^I \|\alpha_i-\alpha_i^0\|_{l^1}
    \,\leq\, 2 \sum_{i=1}^I \|\alpha_i-\alpha_i^0\|_{l^1(W_i)}
  \end{equation}
  with $W_i$ representing the \l0-support of $\alpha_i^0$.
  Applying similar arguments as in
  \eqref{eq:ProofL1Split-4}--\eqref{eq:ProofL1Split-5} and using the
  inequality \eqref{eq:BasicIneq3} from
  appendix~\ref{app:SumEstimates} and
  \eqref{eq:ProofL1SplitMultiple-1} we obtain 
  \begin{equation}
    \label{eq:ProofL1SplitMultiple-2}
    \begin{split}
      4\delta^2 
      &\,\geq\, \sum_{i=1}^I\|\alpha_i-\alpha_i^0\|^2_2 -
      \sum_{i=1}^I \sum_{j\not=i}|\langle \alpha_i-\alpha_i^0,
      T^*_{c_j-c_i}(\alpha_j-\alpha_j^0)\rangle|\\
      &\,\geq\, \sum_{i=1}^I\|\alpha_i-\alpha_i^0\|^2_2 -
      \sum_{i=1}^I \sum_{j\not=i}\frac1{|c_i-c_j|^{\third}} 
      \|\alpha_i-\alpha_i^0\|_\l1 \|\alpha_j-\alpha_j^0\|_\l1\\
      &\,\geq\, \sum_{i=1}^I\|\alpha_i-\alpha_i^0\|^2_2 -
      \max_{j\not=k}\frac1{|c_j-c_k|^{\third}} \sum_{i=1}^I \sum_{j\not=i}
      \|\alpha_i-\alpha_i^0\|_\l1 \|\alpha_j-\alpha_j^0\|_\l1\\
      &\,\geq\, \sum_{i=1}^I\|\alpha_i-\alpha_i^0\|^2_2 -
      \max_{j\not=k}\frac1{|c_j-c_k|^{\third}} \frac{I-1}{I} 
      \Bigl( \sum_{i=1}^I \|\alpha_i-\alpha_i^0\|_\l1\Bigr)^2\\
      &\,\geq\, \sum_{i=1}^I\|\alpha_i-\alpha_i^0\|^2_2 -
      \max_{j\not=k}\frac1{|c_j-c_k|^{\third}} \frac{I-1}{I} 4
      \Bigl( \sum_{i=1}^I \|\alpha_i-\alpha_i^0\|_{\l1(W_i)}\Bigr)^2 \,.
    \end{split}
  \end{equation}
  Applying H\"older's inequality and \eqref{eq:BasicIneq2} yields
  \begin{equation}
    \label{eq:ProofL1SplitMultiple-3}
    \begin{split}
      4\delta^2 
      &\,\geq\, \sum_{i=1}^I\|\alpha_i-\alpha_i^0\|^2_2 -
      \max_{j\not=k}\frac1{|c_j-c_k|^{\third}} \frac{I-1}{I} 4
      \Bigl( \sum_{i=1}^I |W_i|^{\half}\|\alpha_i-\alpha_i^0\|_2\Bigr)^2\\
      &\,\geq\, \sum_{i=1}^I\|\alpha_i-\alpha_i^0\|^2_2 -
      \max_{j\not=k}\frac1{|c_j-c_k|^{\third}} 4 (I-1)
      \sum_{i=1}^I |W_i|\|\alpha_i-\alpha_i^0\|_2^2 \,,
    \end{split}
  \end{equation}
  where $|W_i|=\|\alpha_i^{0}\|_{\l0}$.
\end{proof}

As in corollary~\ref{th:L1SplitWS} we can improve these estimates, under
the assumption that some a priori knowledge of the size of the
non-evanescent subspaces is available and that the individual source components
are sufficiently far apart from each other. 

\begin{corollary}
  \label{th:L1SplitMultipleWS}
  If we add to the hypothesis of theorem~\ref{th:L1SplitMultiple}:
  \begin{equation*}
    \alpha_i^0,\alpha_{i}\in\l2(-N_{i},N_{i}) \quad \text{for each } i 
    \qquad \text{and} \qquad
    |c_i-c_j|\,>\,2(N_i+N_j+1) \quad \text{for every } i\not=j
  \end{equation*}
  for some $N_1,\ldots,N_I\in\N$,
  and replace \eqref{eq:L1SplitPositivityMultiple} with
  \begin{equation*}
    \max_{j\not=k}\frac1{|c_k-c_j|^{\half}}4(I-1)\|\alpha_i^0\|_\l0
    \,<\, 1 \qquad \text{for each } i \,,
  \end{equation*}
  the conclusion becomes, for $i=1,\ldots,I$
  \begin{equation*}
    \|\alpha^{0}_{i}-\alpha_{i}\|_2^{2} 
    \,\le\,
    \Bigl(1-\max_{j\not=k}\frac1{|c_k-c_j|^{\half}}4(I-1)\|\alpha_i^0\|_\l0\Bigr)^{-1} 
    4\delta^{2} \,.
  \end{equation*}
\end{corollary}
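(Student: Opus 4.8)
The plan is to run the proof of Theorem~\ref{th:L1SplitMultiple} essentially verbatim, changing only the translation estimate that controls the cross terms: everywhere the generic bound \eqref{eq:EstTcll} is invoked, I would substitute its well-separated sharpening \eqref{eq:ProofUncertainty3-2}. This is exactly the move that promotes Theorem~\ref{th:L1Split} to Corollary~\ref{th:L1SplitWS}, now executed in the multi-component setting, and it is the sole source of the improved exponent $\half$ in place of $\third$.

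First I would observe that the added hypotheses are precisely what is needed to apply \eqref{eq:ProofUncertainty3-2} (equivalently Theorem~\ref{th:Uncertainty3}) to each ordered pair $(i,j)$. Since $\alpha_i-\alpha_i^0\in\l2(-N_i,N_i)$ and $\alpha_j-\alpha_j^0\in\l2(-N_j,N_j)$ with $N_i,N_j\ge1$, and the separation hypothesis gives $|c_i-c_j|>2(N_i+N_j+1)$, the operator bound \eqref{eq:ProofUncertainty3-2} applies to $T^*_{c_j-c_i}$. Combining it with H\"older's inequality on the supports $[-N_i,N_i]$ and $[-N_j,N_j]$, and \emph{retaining the $\l1$ norms} rather than collapsing to the $\l2$ form of Theorem~\ref{th:Uncertainty3} (which would destroy the sparsity structure needed by the $\l1$ argument), yields
\[
|\langle \alpha_i-\alpha_i^0, T^*_{c_j-c_i}(\alpha_j-\alpha_j^0)\rangle|
\,\le\, \frac{1}{|c_i-c_j|^{\half}}\,\|\alpha_i-\alpha_i^0\|_{\l1}\,\|\alpha_j-\alpha_j^0\|_{\l1}
\]
in place of the $|c_i-c_j|^{\third}$ bound appearing on the second line of \eqref{eq:ProofL1SplitMultiple-2}.

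With this single replacement the rest of the argument is unchanged. I would reproduce the passage \eqref{eq:ProofL1Split-1}--\eqref{eq:ProofL1Split-3} to reach the cone-type inequality \eqref{eq:ProofL1SplitMultiple-1}, expand $4\delta^2$ as in the first line of \eqref{eq:ProofL1SplitMultiple-2}, and then carry out the same chain \eqref{eq:ProofL1SplitMultiple-2}--\eqref{eq:ProofL1SplitMultiple-3} (pulling out the maximum over pairs, applying the sum estimates \eqref{eq:BasicIneq3} and \eqref{eq:BasicIneq2}, and two further uses of H\"older), with every occurrence of the exponent $\third$ now reading $\half$. This produces
\[
4\delta^2 \,\ge\, \sum_{i=1}^I \|\alpha_i-\alpha_i^0\|_2^2
- \max_{j\not=k}\frac{1}{|c_j-c_k|^{\half}}\,4(I-1)\sum_{i=1}^I |W_i|\,\|\alpha_i-\alpha_i^0\|_2^2 \,,
\]
where $W_i$ is the $\l0$-support of $\alpha_i^0$, so that $|W_i|=\|\alpha_i^0\|_{\l0}$. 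Grouping the two sums and invoking the positivity hypothesis of the corollary, which makes the coefficient of each $\|\alpha_i-\alpha_i^0\|_2^2$ positive, I would discard all summands but the $i$-th and rearrange to obtain the claimed estimate.

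I do not anticipate a genuine obstacle: this is a substitution corollary and the final estimate is purely algebraic. The only point deserving care is the bookkeeping required to apply \eqref{eq:ProofUncertainty3-2} uniformly, namely verifying that the hypotheses $M,N\ge1$ and $|c|>2(M+N+1)$ of Theorem~\ref{th:Uncertainty3} hold for every ordered pair $(i,j)$ simultaneously, with $(M,N)=(N_i,N_j)$ and $c=c_j-c_i$. These are furnished termwise by the added assumptions $\alpha_i\in\l2(-N_i,N_i)$ and $|c_i-c_j|>2(N_i+N_j+1)$, after which the sharper exponent $\half$ is legitimate in each cross term.
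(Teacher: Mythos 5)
Your proposal is correct and is exactly the paper's own argument: the paper's proof of this corollary is the one-line instruction to replace \eqref{eq:EstTcll} by \eqref{eq:ProofUncertainty3-2} in \eqref{eq:ProofL1SplitMultiple-2}, which is precisely the substitution you carry out, with the added hypotheses serving only to legitimize \eqref{eq:ProofUncertainty3-2} for each pair $(i,j)$. Your expanded bookkeeping (retaining the $\l1$ norms in the cross terms and checking the hypotheses of theorem~\ref{th:Uncertainty3} pairwise) matches the intended proof in every detail.
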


\begin{proof}
  Replace \eqref{eq:EstTcll} by \eqref{eq:ProofUncertainty3-2} in
  \eqref{eq:ProofL1SplitMultiple-2}. 
\end{proof}

Next we consider multiple source components together with a missing
data component. 

\begin{theorem}
  \label{th:L1CompleteAndSplitMultiple}
  Suppose that $\gamma^0,\alpha_i^0 \in \L2(S^1)$, $c_i\in\R^2$,
  $i=1,\ldots,I$, $\Omega\tm S^1$ and $\beta^0\in\L2(\Omega)$ such
  that
  \begin{subequations}
    \label{eq:L1CompleteAndSplitPositivityMultiple}
    \begin{align}
      \label{eq:L1CompleteAndSplitPositivityMultiple_a}
      \frac{2}{\sqrt{2\pi}} \sum_{i=1}^I \sqrt{|\Omega|\|\alpha_i^0\|_\l0}
      &\,<\, 1 \,,\\
      \label{eq:L1CompleteAndSplitPositivityMultiple_b}
      \max_{j\not=k}\frac1{|c_k-c_j|^{\third}}4(I-1)\|\alpha_{i}^{0}\|_{\l0} 
      + \frac{2}{\sqrt{2\pi}} \sqrt{|\Omega|\|\alpha^0_i\|_\l0}
      &\,<\, 1 \qquad \text{for each } i \,,
    \end{align}
  \end{subequations}
  and
  \begin{equation*}
    \|\gamma^{0}-\beta^0-\sum_{i=1}^I T^*_{c_i}\alpha_i^{0}\|_2
    \,\le\, \delta_0 \qquad \text{for some } \delta_0\geq0 \,.
  \end{equation*}
  If $\delta\geq0$ and $\gamma\in\L2(S^1)$ with 
  \begin{equation*}
    \delta \,\ge\, \delta_0+\|\gamma-\gamma^{0}\|_2
  \end{equation*}
  and
  \begin{equation}
    \label{eq:L1CompleteAndSplitMinMultiple}
    (\alpha_1,\ldots,\alpha_I) 
    =\Argmin \sum_{i=1}^I \|\alpha_i\|_{\l1}
    \quad \text{s.t.} \quad 
    {\|\gamma-\beta-\sum_{i=1}^IT^*_{c_i}\alpha_i\|_2\le\delta}\,,\;
    \alpha_i\in\L2(S^1) \,,\; \beta\in\L2(\Omega) \,,
  \end{equation}
  then
  \begin{subequations}
    \label{eq:L1CompleteAndSplitMultiple}
    \begin{align}
      \label{eq:L1CompleteAndSplitMultiple_a}
      \|\beta^0-\beta\|_2^2 
      &\,\le\, \biggl(1-\frac{2}{\sqrt{2\pi}}
      \sum_{i=1}^I \sqrt{|\Omega|\|\alpha_i^0\|_\l0}\biggr)^{-1} 
      4\delta^{2}\\\noalign{and, for $i=1,\ldots,I$}
      \label{eq:L1CompleteAndSplitMultiple_b}
      \|\alpha^{0}_{i}-\alpha_{i}\|_2^{2} 
      &\,\le\,
      \Bigl(1-\max_{j\not=k}\frac1{|c_k-c_j|^{\third}}
      4(I-1)\|\alpha_{i}^{0}\|_{\l0} 
      - \frac{2}{\sqrt{2\pi}} 
      \sqrt{|\Omega|\|\alpha^0_i\|_\l0}\Bigr)^{-1} 4\delta^{2} \,.
    \end{align}
  \end{subequations}
\end{theorem}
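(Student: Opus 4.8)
The plan is to fuse the two arguments already used for Theorem~\ref{th:L1SplitMultiple} (pure multi-component splitting) and Theorem~\ref{th:L1Complete} (single-component data completion): the splitting cross terms will be controlled by the translation estimate \eqref{eq:EstTcll}, while the new completion cross terms (those pairing each $\alpha_i$ with $\beta$) will be controlled by the Fourier uncertainty principle \eqref{eq:FourierUncertainty}. Throughout I abbreviate $a_i:=\alpha_i-\alpha_i^0$, $b:=\beta-\beta^0$, and write $W_i$ for the $\l0$-support of $\alpha_i^0$, so $|W_i|=\|\alpha_i^0\|_{\l0}$; since $\beta,\beta^0\in\L2(\Omega)$ we have $b\in\L2(\Omega)$ and hence $\|b\|_{\L0}\le|\Omega|$. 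The feasibility check $\|\gamma-\beta^0-\sum_i T^*_{c_i}\alpha_i^0\|_2\le\delta$ follows from $\delta\ge\delta_0+\|\gamma-\gamma^0\|_2$, so $(\alpha_i^0,\beta^0)$ is admissible in \eqref{eq:L1CompleteAndSplitMinMultiple}. As $\beta$ is absent from the objective, the minimality argument leading to \eqref{eq:ProofL1SplitMultiple-1} carries over unchanged and yields the localization $\sum_{i=1}^I\|a_i\|_{\l1}\le 2\sum_{i=1}^I\|a_i\|_{\l1(W_i)}$.

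Next I would expand the constraint exactly as in \eqref{eq:ProofL1Split-4}, obtaining $\|b+\sum_i T^*_{c_i}a_i\|_2\le2\delta$. Squaring, using that each $T^*_{c_i}$ is an $\L2$-isometry, and separating the diagonal from the two families of cross terms gives
\begin{equation*}
  4\delta^2 \,\ge\, \|b\|_2^2 + \sum_{i=1}^I\|a_i\|_2^2
  - 2\sum_{i=1}^I |\langle T^*_{c_i}a_i,b\rangle|
  - \sum_{i=1}^I\sum_{j\ne i}|\langle a_i,T^*_{c_j-c_i}a_j\rangle| \,.
\end{equation*}
The completion cross terms I bound exactly as in \eqref{eq:ProofL1Complete-3} (Hölder, unimodularity of $T^*_{c_i}$, and the mapping properties of the Fourier-coefficient operator) by $|\langle T^*_{c_i}a_i,b\rangle|\le\frac1{\sqrt{2\pi}}\|a_i\|_{\l1}\|b\|_{\L1}$; the splitting cross terms I bound as in \eqref{eq:ProofL1SplitMultiple-2} using \eqref{eq:EstTcll} by $\frac1{|c_i-c_j|^{\third}}\|a_i\|_{\l1}\|a_j\|_{\l1}$.

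Now comes the bookkeeping, inserting the shared localization into each family. The splitting sum is treated with \eqref{eq:BasicIneq3} and then \eqref{eq:BasicIneq2} precisely as in \eqref{eq:ProofL1SplitMultiple-2}--\eqref{eq:ProofL1SplitMultiple-3}, producing $\max_{j\ne k}\frac{4(I-1)}{|c_j-c_k|^{\third}}\sum_i|W_i|\|a_i\|_2^2$. The completion sum, after the localization and Hölder (using $\|b\|_{\L1}\le\sqrt{|\Omega|}\|b\|_2$ and $\|a_i\|_{\l1(W_i)}\le\sqrt{|W_i|}\|a_i\|_2$), becomes $\frac4{\sqrt{2\pi}}\sqrt{|\Omega|}\,\|b\|_2\sum_i\sqrt{|W_i|}\,\|a_i\|_2$. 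The one genuinely new step is to split this coupling symmetrically via $2\|a_i\|_2\|b\|_2\le\|a_i\|_2^2+\|b\|_2^2$, turning it into $\frac2{\sqrt{2\pi}}\sum_i\sqrt{|\Omega||W_i|}\,(\|b\|_2^2+\|a_i\|_2^2)$. Collecting everything gives
\begin{equation*}
  4\delta^2 \,\ge\,
  \Bigl(1-\tfrac2{\sqrt{2\pi}}\textstyle\sum_i\sqrt{|\Omega||W_i|}\Bigr)\|b\|_2^2
  + \sum_{i=1}^I\Bigl(1-\tfrac2{\sqrt{2\pi}}\sqrt{|\Omega||W_i|}
  -\max_{j\ne k}\tfrac{4(I-1)}{|c_j-c_k|^{\third}}|W_i|\Bigr)\|a_i\|_2^2 \,.
\end{equation*}

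The conclusion is then immediate: hypotheses \eqref{eq:L1CompleteAndSplitPositivityMultiple_a} and \eqref{eq:L1CompleteAndSplitPositivityMultiple_b} assert precisely that every coefficient on the right is strictly positive, so I may discard all but one nonnegative square. Keeping only the $\|b\|_2^2$ term yields \eqref{eq:L1CompleteAndSplitMultiple_a}, and keeping only a single $\|a_i\|_2^2$ term yields \eqref{eq:L1CompleteAndSplitMultiple_b} (recalling $|W_i|=\|\alpha_i^0\|_{\l0}$). I expect the main obstacle to be organizational rather than conceptual: tracking constants through the two separate localizations so that the completion coupling ends with coefficient $\frac2{\sqrt{2\pi}}$ after the symmetric split and the splitting coupling ends with the factor $4(I-1)$, since these are exactly the constants that must match \eqref{eq:L1CompleteAndSplitPositivityMultiple} for the final term-dropping to be legitimate.
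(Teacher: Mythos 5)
Your proposal is correct and follows essentially the same route as the paper's own proof: the same shared $\l1$-localization $\sum_i\|\alpha_i-\alpha_i^0\|_{\l1}\le 2\sum_i\|\alpha_i-\alpha_i^0\|_{\l1(W_i)}$, the same expansion of the constraint into diagonal terms plus two families of cross terms, the estimate \eqref{eq:EstTcll} with \eqref{eq:BasicIneq3}--\eqref{eq:BasicIneq2} for the splitting couplings and the Fourier-coefficient mapping properties for the completion couplings, and the same symmetric AM--GM split producing the coefficients $\frac{2}{\sqrt{2\pi}}\sqrt{|\Omega|\|\alpha_i^0\|_{\l0}}$ and $4(I-1)\|\alpha_i^0\|_{\l0}$ that match \eqref{eq:L1CompleteAndSplitPositivityMultiple}. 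No gaps.
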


\begin{proof}
  Proceeding as in \eqref{eq:ProofL1Split-1}--\eqref{eq:ProofL1Split-3}
  we find that
  \begin{equation}
    \label{eq:ProofL1CompleteAndSplitMultiple-1}
    \sum_{i=1}^I \|\alpha_i-\alpha_i^0\|_{l^1}
    \,\leq\, 2 \sum_{i=1}^I \|\alpha_i-\alpha_i^0\|_{l^1(W_i)}
  \end{equation}
  with $W_i$ representing the \l0-support of $\alpha_i^0$.
  Applying similar arguments as in \eqref{eq:ProofL1Split-4} we obtain
  \begin{equation}
    \label{eq:ProofL1CompleteAndSplitMultiple-2}
    \begin{split}
      4\delta^2 
      &\,\geq\, \sum_{i=1}^I\|\alpha_i-\alpha_i^0\|^2_2 
      + \|\beta-\beta^0\|_2^2
      -\sum_{i=1}^I \sum_{j\not=i}|\langle \alpha_i-\alpha_i^0,
      T^*_{c_j-c_i}(\alpha_j-\alpha_j^0)\rangle|\\
      &\phantom{\,\geq\,}
      -2\sum_{i=1}^I|\langle T^*_{c_i}(\alpha_i-\alpha_i^0),
      \beta-\beta^0\rangle|\,.
    \end{split}
  \end{equation}
  The third term on the right hand side of 
  \eqref{eq:ProofL1CompleteAndSplitMultiple-2} can be estimated as in
  \eqref{eq:ProofL1SplitMultiple-2}--\eqref{eq:ProofL1SplitMultiple-3},
  while for the last term we find using H\"older's inequality, the
  mapping properties of the operator which maps $\alpha$ to its
  Fourier coefficients and
  \eqref{eq:ProofL1CompleteAndSplitMultiple-1} that 
  \begin{equation*}
    \begin{split}
      2\sum_{i=1}^I|\langle T^*_{c_i}(\alpha_i-\alpha_i^0),
      &\beta-\beta^0\rangle|
      \,\le\, 2\Bigl(\sum_{i=1}^I\|\alpha_i-\alpha_i^0\|_\Linfty\Bigr)
      \|\beta-\beta^0\|_\L1\\
      &\,\le\, \frac{2}{\sqrt{2\pi}}
      \Bigl(\sum_{i=1}^I\|\alpha_i-\alpha_i^0\|_\l1\Bigr)
      \|\beta-\beta^0\|_\L1\\
      &\,\le\, \frac{4}{\sqrt{2\pi}}
      \Bigl(\sum_{i=1}^I\|\alpha_i-\alpha_i^0\|_{\l1(W_i)}\Bigr)
      \|\beta-\beta^0\|_\L1\\
      &\,\le\, \frac{4}{\sqrt{2\pi}}
      \Bigl(\sum_{i=1}^I\sqrt{|W_i|}\|\alpha_i-\alpha_i^0\|_2\Bigr)
      \sqrt{|\Omega|}\|\beta-\beta^0\|_2\\
      &\,\le\, \frac{2}{\sqrt{2\pi}} \Bigl(
      \sum_{i=1}^I \sqrt{|\Omega||W_i|}\|\alpha_i-\alpha_i^0\|_2^2
      + \sum_{i=1}^I \sqrt{|\Omega||W_i|}\|\beta-\beta^0\|_2^2 \Bigr) \,,
    \end{split}
  \end{equation*}
  where $|W_i|=\|\alpha_i^{0}\|_{\l0}$.
  Combining these estimates ends the proof.
\end{proof}

Again, including a priori information of the size of the
non-evanescent subspaces and assuming that the individual source
components are well separated, the result can be improved:

\begin{corollary}
  \label{th:L1CompleteAndSplitMultipleWS}
  If we add to the hypothesis of theorem~\ref{th:L1CompleteAndSplitMultiple}:
  \begin{equation*}
    \alpha_i^0,\alpha_{i}\in\l2(-N_{i},N_{i}) \quad \text{for each } i
    \qquad \text{and} \qquad
    |c_i-c_j|\,>\,2(N_i+N_j+1) \quad \text{for every } i\not=j
  \end{equation*}
  for some $N_1,\ldots,N_I\in\N$, and replace
  \eqref{eq:L1CompleteAndSplitPositivityMultiple_b} with 
  \begin{equation*}
    \max_{j\not=k}\frac1{|c_k-c_j|^{\half}}4(I-1)\|\alpha_{i}^{0}\|_{\l0} 
    + \frac{2}{\sqrt{2\pi}} \sqrt{|\Omega|\|\alpha_i^0\|_\l0}
    \,<\, 1 \qquad \text{for each } i \,,
  \end{equation*}
  the conclusion \eqref{eq:L1CompleteAndSplitMultiple_b} becomes, for
  $i=1,\ldots,I$ 
  \begin{equation*}
    \|\alpha^{0}_{i}-\alpha_{i}\|_2^{2} 
    \,\le\,
    \Bigl(1-\max_{j\not=k}\frac1{|c_k-c_j|^{\half}}4(I-1)\|\alpha_{i}^{0}\|_{\l0} 
    + \frac{2}{\sqrt{2\pi}}\sqrt{|\Omega|\|\alpha_i^0\|_\l0}\Bigr)^{-1} 
    4\delta^{2} \,.
  \end{equation*}
\end{corollary}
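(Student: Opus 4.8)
The plan is to reuse the proof of Theorem~\ref{th:L1CompleteAndSplitMultiple} essentially verbatim, modifying only the single uncertainty estimate that is sensitive to the separation between distinct source components, exactly as in the passage from Theorem~\ref{th:L1SplitMultiple} to Corollary~\ref{th:L1SplitMultipleWS}. First I would note that the two added hypotheses — that each $\alpha_i^0$ and each minimizer $\alpha_i$ lies in $\l2(-N_i,N_i)$, and that $|c_i-c_j|>2(N_i+N_j+1)$ for $i\ne j$ — are precisely what is needed to invoke Theorem~\ref{th:Uncertainty3}. Hence for each pair $i\ne j$ the inter-component cross term $|\langle\alpha_i-\alpha_i^0,\,T^*_{c_j-c_i}(\alpha_j-\alpha_j^0)\rangle|$ obeys the sharper bound \eqref{eq:ProofUncertainty3-2}, in which the decay rate $|c_i-c_j|^{-\half}$ replaces the rate $|c_i-c_j|^{-\third}$ furnished by \eqref{eq:EstTcll}.

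I would then carry the argument of Theorem~\ref{th:L1CompleteAndSplitMultiple} unchanged up through \eqref{eq:ProofL1CompleteAndSplitMultiple-2}, which bounds $4\delta^2$ below by $\sum_i\|\alpha_i-\alpha_i^0\|_2^2+\|\beta-\beta^0\|_2^2$ minus the splitting cross terms (the double sum) and minus the completion cross term $2\sum_i|\langle T^*_{c_i}(\alpha_i-\alpha_i^0),\beta-\beta^0\rangle|$. The crucial structural observation is that these two families of cross terms are controlled by two different uncertainty principles: the splitting terms use the far-field translation estimate, whereas the completion term uses only the mapping properties of the Fourier-coefficient operator as encapsulated in \eqref{eq:FourierUncertainty}. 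Accordingly, I would estimate the splitting terms exactly as in \eqref{eq:ProofL1SplitMultiple-2}--\eqref{eq:ProofL1SplitMultiple-3} but with \eqref{eq:ProofUncertainty3-2} in place of \eqref{eq:EstTcll}, producing the factor $\max_{j\ne k}|c_k-c_j|^{-\half}\,4(I-1)\|\alpha_i^0\|_{\l0}$, and I would estimate the completion term verbatim, which leaves the summand $\tfrac{2}{\sqrt{2\pi}}\sqrt{|\Omega|\,\|\alpha_i^0\|_{\l0}}$ unchanged. Completing the square and dropping the nonnegative remainders then yields the asserted inequality.

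Since the analysis is really bookkeeping rather than a new estimate, the only thing to watch is the discipline of modifying exactly one bound. I would be careful that the refinement \eqref{eq:ProofUncertainty3-2} is applied only to the inter-component translations $T^*_{c_j-c_i}$, where both bands $[-N_i,N_i]$, $[-N_j,N_j]$ and the separation $|c_i-c_j|>2(N_i+N_j+1)$ are available, and that the completion cross term — in which $\beta\in\L2(\Omega)$ carries no band restriction — must still be handled by \eqref{eq:FourierUncertainty}. Because only the splitting contribution improves, the completion estimate \eqref{eq:L1CompleteAndSplitMultiple_a} and the hypothesis \eqref{eq:L1CompleteAndSplitPositivityMultiple_a} are untouched, and only the separation exponent in \eqref{eq:L1CompleteAndSplitPositivityMultiple_b} and in the final bound improves from $\third$ to $\half$.
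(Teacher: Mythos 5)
Your proposal is correct and follows exactly the route the paper intends: as with corollaries~\ref{th:L1SplitWS} and~\ref{th:L1SplitMultipleWS}, the proof is to replace \eqref{eq:EstTcll} by \eqref{eq:ProofUncertainty3-2} in the splitting cross terms of the proof of theorem~\ref{th:L1CompleteAndSplitMultiple} (i.e.\ in the steps corresponding to \eqref{eq:ProofL1SplitMultiple-2}--\eqref{eq:ProofL1SplitMultiple-3}), while leaving the completion cross term, hypothesis \eqref{eq:L1CompleteAndSplitPositivityMultiple_a}, and conclusion \eqref{eq:L1CompleteAndSplitMultiple_a} untouched. Your remark that the added band-limitation and separation hypotheses are exactly what legitimizes the sharper bound for the inter-component terms, and that the $\beta$-term cannot benefit from it, is precisely the relevant point.
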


Finally, we establish variants of theorem~\ref{th:L1SplitMultiple}
and theorem~\ref{th:L1CompleteAndSplitMultiple}, where we replace the
\l1 minimization in \eqref{eq:L1SplitMinMultiple} and
\eqref{eq:L1CompleteAndSplitMinMultiple} by a weighted \l1
minimization in order to obtain better estimates for certain geometric
configurations of the supports of the individual source components.
In contrast to theorem~\ref{th:L1SplitMultiple} the constant in the
stability estimate \eqref{eq:L1SplitMultipleN} in
theorem~\ref{th:L1SplitMultipleN} below only depends on the distances
of source components relative to the source component corresponding to
the far field component appearing on the left hand side of the
estimate. 

\begin{theorem}
  \label{th:L1SplitMultipleN}
  Suppose that $\gamma^0,\alpha_i^0\in\L2(S^1)$ and $c_i\in\R^2$, and
  set
  \begin{equation}
    \label{eq:L1SplitMultipleAssumptionN}
    a_i^2 
    \,=\, \max_{j\not=i} \Bigl(\frac{2}{|c_i-c_j|}\Bigr)^{\third} 
    \qquad\text{or}\qquad
    a_i^2 
    \,=\, \max_{\substack{j\not=i\\k\not=i,j}} 
    \Bigl( \frac{1}{|c_i-c_j|} + \frac{1}{|c_i-c_k|}\Bigr)^{\third} \,,
  \end{equation}
  $i=1,\ldots,I$.
  Assume that
  \begin{equation*}
    4(I-1)a_i^2\|\alpha_{i}^{0}\|_{\l0} 
    \,<\, 1 \qquad \text{for each } i \,,
  \end{equation*}
  and
  \begin{equation*}
    \|\gamma^{0}-\sum_{i=1}^I T^*_{c_i}\alpha_i^{0}\|_2
    \,\le\, \delta_0  \qquad \text{for some } \delta_0\geq0 \,.
  \end{equation*}
  If $\delta\geq0$ and $\gamma\in\L2(S^1)$ with 
  \begin{equation*}
    \delta \,\ge\, \delta_0+\|\gamma-\gamma^{0}\|_2
  \end{equation*}
  and
  \begin{equation*}
    (\alpha_1,\ldots,\alpha_I) 
    \,=\,\Argmin \sum_{i=1}^I a_i \|\alpha_i\|_{\l1}
    \quad \text{s.t.} \quad 
    {\|\gamma-\sum_{i=1}^IT^*_{c_i}\alpha_i\|_2\le\delta}\,,\;
    \alpha_i\in\L2(S^1) \,,
  \end{equation*}
  then, for $i=1,\ldots,I$
  \begin{equation}
    \label{eq:L1SplitMultipleN}
    \|\alpha^{0}_{i}-\alpha_{i}\|_2^{2} 
    \,\le\,
    \bigl(1-4(I-1)a_i^2\|\alpha_{i}^{0}\|_{\l0}\bigr)^{-1} 
    4\delta^{2} \,.
  \end{equation}
\end{theorem}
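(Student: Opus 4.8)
The plan is to mimic the proof of Theorem~\ref{th:L1SplitMultiple} essentially verbatim, the only structural change being that the weights $a_i$ attached to each $\|\alpha_i\|_{\l1}$ in the objective must be carried through every step. The definitions of $a_i^2$ in \eqref{eq:L1SplitMultipleAssumptionN} are then exactly what is needed to absorb the distance factors $|c_i-c_j|^{-\third}$ coming from the uncertainty principle \eqref{eq:EstTcll}, while arranging that the constant surviving for the $i$-th component depends only on $a_i$, i.e.\ only on distances measured from $c_i$.

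First I would establish the weighted cone condition. Since $(\alpha_1^0,\dots,\alpha_I^0)$ is feasible (by $\delta\ge\delta_0+\|\gamma-\gamma^0\|_2$) and $(\alpha_1,\dots,\alpha_I)$ minimizes $\sum_i a_i\|\alpha_i\|_{\l1}$, minimality gives $\sum_i a_i\|\alpha_i\|_{\l1}\le\sum_i a_i\|\alpha_i^0\|_{\l1}$. Inserting the splitting \eqref{eq:ProofL1Split-2} of each $\|\alpha_i\|_{\l1}$ over the $\l0$-support $W_i$ of $\alpha_i^0$ and its complement, multiplying by $a_i$ and summing, yields the weighted analogue of \eqref{eq:ProofL1SplitMultiple-1},
\[
\sum_{i=1}^I a_i\|\alpha_i-\alpha_i^0\|_{\l1}\,\le\,2\sum_{i=1}^I a_i\|\alpha_i-\alpha_i^0\|_{\l1(W_i)}\,.
\]
Next, exactly as in \eqref{eq:ProofL1Split-4} and the first line of \eqref{eq:ProofL1SplitMultiple-2}, I would expand $4\delta^2\ge\|\sum_i T^*_{c_i}(\alpha_i-\alpha_i^0)\|_2^2$ and bound the off-diagonal terms with \eqref{eq:EstTcll} to obtain
\[
4\delta^2\,\ge\,\sum_{i=1}^I\|\alpha_i-\alpha_i^0\|_2^2-\sum_{i=1}^I\sum_{j\ne i}\frac{1}{|c_i-c_j|^{\third}}\|\alpha_i-\alpha_i^0\|_{\l1}\|\alpha_j-\alpha_j^0\|_{\l1}\,.
\]

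The crux is the treatment of the double sum. Writing the coefficient as $\frac{1}{|c_i-c_j|^{\third}}=\bigl(a_ia_j|c_i-c_j|^{\third}\bigr)^{-1}a_ia_j$, I would verify from \eqref{eq:L1SplitMultipleAssumptionN} that $a_i^2\ge|c_i-c_j|^{-\third}$ for every $j\ne i$ (for the first choice this follows from the factor $2$ inside the maximum, for the second by retaining only one of the two reciprocals), hence $a_ia_j\ge|c_i-c_j|^{-\third}$ and the residual factor $(a_ia_j|c_i-c_j|^{\third})^{-1}$ is at most $1$. Thus the double sum is bounded by $\sum_i\sum_{j\ne i}\bigl(a_i\|\alpha_i-\alpha_i^0\|_{\l1}\bigr)\bigl(a_j\|\alpha_j-\alpha_j^0\|_{\l1}\bigr)$, to which I apply \eqref{eq:BasicIneq3}, then the weighted cone condition above, then H\"older's inequality and \eqref{eq:BasicIneq2}, precisely as in \eqref{eq:ProofL1SplitMultiple-2}--\eqref{eq:ProofL1SplitMultiple-3} but with every $\|\alpha_i-\alpha_i^0\|_{\l1}$ replaced by $a_i\|\alpha_i-\alpha_i^0\|_{\l1}$. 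This produces the diagonal bound $4(I-1)\sum_i a_i^2|W_i|\,\|\alpha_i-\alpha_i^0\|_2^2$ for the subtracted term, with $|W_i|=\|\alpha_i^0\|_{\l0}$.

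Combining the two displays gives $4\delta^2\ge\sum_i\bigl(1-4(I-1)a_i^2\|\alpha_i^0\|_{\l0}\bigr)\|\alpha_i-\alpha_i^0\|_2^2$; since each coefficient is positive by hypothesis, dropping all but the $i$-th summand yields \eqref{eq:L1SplitMultipleN}. The main obstacle is confined to the crux step: one must confirm that \emph{both} expressions in \eqref{eq:L1SplitMultipleAssumptionN} dominate $|c_i-c_j|^{-\third}$ for every $j\ne i$, so that all residual factors stay below $1$ regardless of which $a_i$ is used, and that the per-index constant that survives the collapse depends only on $a_i$ (equivalently, only on distances from $c_i$), which is the entire purpose of the weighting. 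Beyond this bookkeeping I expect no new difficulty, since the inequalities \eqref{eq:BasicIneq3}, \eqref{eq:BasicIneq2} and the final drop \eqref{eq:proofL2SplitLS-4} are the same ones already used for Theorem~\ref{th:L1SplitMultiple}.
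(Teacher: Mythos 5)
Your proposal is correct, and its skeleton is exactly the paper's: feasibility of $(\alpha_1^0,\ldots,\alpha_I^0)$ yielding the weighted cone condition, the expansion $4\delta^2\ge\|\sum_{i=1}^I T^*_{c_i}(\alpha_i-\alpha_i^0)\|_2^2$ with cross terms controlled by \eqref{eq:EstTcll}, then \eqref{eq:BasicIneq3}, H\"older's inequality and \eqref{eq:BasicIneq2}, and finally positivity of the coefficients to isolate the $i$-th term. Where you genuinely differ is in the crux inequality $a_ia_j\ge|c_i-c_j|^{-1/3}$: you obtain it directly, noting that each of $a_i^2$ and $a_j^2$ dominates $|c_i-c_j|^{-1/3}$ (for the first choice in \eqref{eq:L1SplitMultipleAssumptionN} trivially, for the second by discarding one reciprocal --- which needs $I\ge3$ so that a third index exists, as is anyway required for that definition to make sense), and then multiplying and taking square roots. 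The paper instead invokes the triangle inequality $|c_i-c_j|+|c_i-c_k|\ge|c_j-c_k|$ to get $a_i^2\ge\bigl(|c_{jk}|/(|c_{ij}||c_{ik}|)\bigr)^{1/3}$ for all $j\ne k$, and recovers the same bound through the cancellation $a_ia_j\ge\bigl(|c_{jk}|/(|c_{ij}||c_{ik}|)\bigr)^{1/6}\bigl(|c_{ik}|/(|c_{ij}||c_{jk}|)\bigr)^{1/6}=|c_{ij}|^{-1/3}$. Both routes land on the identical inequality, so nothing is lost; yours is shorter, and it would in fact allow dropping the factor $2$ from the first definition of $a_i^2$ (weakening the hypothesis and sharpening \eqref{eq:L1SplitMultipleN}), which the paper's triangle-inequality detour cannot do, since it needs $\max(2/x,2/y)\ge 1/x+1/y$. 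One cosmetic slip: the final step is simply dropping the nonnegative summands $j\ne i$ from $\sum_{i=1}^I\bigl(1-4(I-1)a_i^2\|\alpha_i^0\|_{\l0}\bigr)\|\alpha_i-\alpha_i^0\|_2^2\le 4\delta^2$, not an appeal to \eqref{eq:proofL2SplitLS-4}, which belongs to the least-squares proofs.
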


\begin{proof}
  Proceeding as in \eqref{eq:ProofL1Split-1}--\eqref{eq:ProofL1Split-3}
  we find
  \begin{equation}
    \label{eq:ProofL1SplitMultipleN-1}
    \sum_{i=1}^I a_i \|\alpha_i-\alpha_i^0\|_{l^1}
    \,\leq\, 2 \sum_{i=1}^I a_i \|\alpha_i-\alpha_i^0\|_{l^1(W_i)}
  \end{equation}
  with $W_i$ representing the \l0-support of $\alpha_i^0$.

  Abbreviating $c_{ij} = c_i-c_j$, the triangle inequality shows
  \begin{equation*}
    |c_{ij}| + |c_{ik}|
    \,=\, |c_i-c_j| + |c_i-c_k|
    \,\geq\, |c_j-c_k| 
    \,=\, |c_{jk}| \,,
  \end{equation*}
  i.e.,
  \begin{equation*}
    \frac{1}{|c_{ij}|} + \frac{1}{|c_{ik}|}
    \,\geq\, \frac{|c_{jk}|}{|c_{ij}||c_{ik}|} \,.
  \end{equation*}
  Thus our assumption \eqref{eq:L1SplitMultipleAssumptionN} implies
  \begin{equation*}
    a_i^2
    \,\geq\, \Bigl( \frac{|c_{jk}|}{|c_{ij}||c_{ik}|} \Bigr)^{\frac13}
    \qquad \text{for every } j\not=k \,,
  \end{equation*}
  and therefore, using \eqref{eq:BasicIneq1}
  \begin{equation}
    \label{eq:ProofL1SplitMultipleN-5}
    \begin{split}
      \Bigl(\sum_{i=1}^Ia_i\|\alpha_i\|_\l1\Bigr)^2 
      &\,=\, \sum_{i=1}^Ia_i^2\|\alpha_i\|_\l1^2 
      + \sum_{i=1}^I\sum_{j\not=i}a_ia_j\|\alpha_i\|_\l1\|\alpha_j\|_\l1\\
      &\,\geq\, \frac{I}{I-1} 
      \sum_{i=1}^I\sum_{j\not=i}a_ia_j\|\alpha_i\|_\l1\|\alpha_j\|_\l1\\
      &\,\geq\, \frac{I}{I-1} 
      \sum_{i=1}^I\sum_{j\not=i}
      \Bigl(\frac{|c_{jk}|}{|c_{ij}||c_{ik}|}\Bigr)^{\frac16}
      \Bigl(\frac{|c_{ik}|}{|c_{ij}||c_{jk}|}\Bigr)^{\frac16}
      \|\alpha_i\|_\l1\|\alpha_j\|_\l1\\
      &\,=\, \frac{I}{I-1} \sum_{i=1}^I\sum_{j\not=i} 
      \frac{1}{|c_{ij}|^{\frac13}} \|\alpha_i\|_\l1\|\alpha_j\|_\l1 \,.
    \end{split}
  \end{equation}
  Now, applying similar arguments as in
  \eqref{eq:ProofL1Split-4}--\eqref{eq:ProofL1Split-5}, using
  \eqref{eq:ProofL1SplitMultipleN-1} and
  \eqref{eq:ProofL1SplitMultipleN-5}, we obtain
  \begin{equation}
    \label{eq:ProofL1SplitMultipleN-6}
    \begin{split}
      4\delta^2 
      &\,\geq\, \sum_{i=1}^I\|\alpha_i-\alpha_i^0\|^2_2 -
      \sum_{i=1}^I \sum_{j\not=i}|\langle \alpha_i-\alpha_i^0,
      T^*_{c_j-c_i}(\alpha_j-\alpha_j^0)\rangle|\\
      &\,\geq\, \sum_{i=1}^I\|\alpha_i-\alpha_i^0\|^2_2 
      - \sum_{i=1}^I \sum_{j\not=i}\frac1{|c_{ij}|^{\third}} 
      \|\alpha_i-\alpha_i^0\|_\l1 \|\alpha_j-\alpha_j^0\|_\l1\\
      &\,\geq\, \sum_{i=1}^I\|\alpha_i-\alpha_i^0\|^2_2 
      - \frac{I-1}{I} 
      \Bigl( \sum_{i=1}^I a_i \|\alpha_i-\alpha_i^0\|_\l1\Bigr)^2
    \end{split}
  \end{equation}
  Using H\"older's inequality and \eqref{eq:BasicIneq2} we deduce
  \begin{equation}
    \label{eq:ProofL1SplitMultipleN-7}
    \begin{split}
      4\delta^2 
      &\,\geq\, \sum_{i=1}^I\|\alpha_i-\alpha_i^0\|^2_2 
      - 4 \frac{I-1}{I} 
      \Bigl( \sum_{i=1}^I a_i \|\alpha_i-\alpha_i^0\|_{\l1(W_i)}\Bigr)^2\\
      &\,\geq\, \sum_{i=1}^I\|\alpha_i-\alpha_i^0\|^2_2 
      - 4 \frac{I-1}{I}
      \Bigl( \sum_{i=1}^I a_i|W_i|^{\half}\|\alpha_i-\alpha_i^0\|_2\Bigr)^2\\
      &\,\geq\, \sum_{i=1}^I\|\alpha_i-\alpha_i^0\|^2_2 
      - 4 (I-1)
      \sum_{i=1}^I a_i^2|W_i|\|\alpha_i-\alpha_i^0\|_2^2 \,,
    \end{split}
  \end{equation}
  where $|W_i|=\|\alpha_i^{0}\|_{\l0}$.
  This ends the proof.
\end{proof}

\begin{corollary}
  \label{th:L1SplitMultipleNWS}
  If we add to the hypothesis of theorem~\ref{th:L1SplitMultipleN}:
  \begin{equation*}
    \alpha_i^0,\alpha_{i}\in\l2(-N_{i},N_{i}) \quad \text{for each } i
    \qquad \text{and} \qquad 
    |c_i-c_j|\,>\,2(N_i+N_j+1) \quad \text{for every } j\not=i
  \end{equation*}
  for some $N_1,\ldots,N_I\in\N$, and replace
  \eqref{eq:L1SplitMultipleAssumptionN} with 
  \begin{equation*}
    a_i^2 
    \,=\, \max_{j\not=i} \Bigl(\frac{2}{|c_i-c_j|}\Bigr)^{\half} 
    \qquad\text{or}\qquad
    a_i^2 
    \,=\, \max_{\substack{j\not=i\\k\not=i,j}} 
    \Bigl( \frac{1}{|c_i-c_j|} + \frac{1}{|c_i-c_k|}\Bigr)^{\half}
    \qquad \text{for each } i \,,
  \end{equation*}
  the conclusion remains true.
\end{corollary}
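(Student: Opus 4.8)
The plan is to mimic the proof of theorem~\ref{th:L1SplitMultipleN} essentially line by line, replacing the exponent $\third$ by $\half$ everywhere and substituting the sharper translation estimate \eqref{eq:ProofUncertainty3-2} for \eqref{eq:EstTcll}. The two added hypotheses are exactly what licenses this substitution: since $\alpha_i^0,\alpha_i\in\l2(-N_i,N_i)$ we have $\alpha_i-\alpha_i^0\in\l2(-N_i,N_i)$, and $|c_i-c_j|>2(N_i+N_j+1)$ is precisely the separation hypothesis of theorem~\ref{th:Uncertainty3}. Hence \eqref{eq:ProofUncertainty3-2} applies to each cross term and yields $|\langle\alpha_i-\alpha_i^0,T^*_{c_j-c_i}(\alpha_j-\alpha_j^0)\rangle|\le|c_{ij}|^{-\half}\|\alpha_i-\alpha_i^0\|_{\l1}\|\alpha_j-\alpha_j^0\|_{\l1}$ in place of the $|c_{ij}|^{-\third}$ bound used in \eqref{eq:ProofL1SplitMultipleN-6}, where $c_{ij}=c_i-c_j$.

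First I would check that the geometric part of the argument survives the change of exponent. The triangle inequality $|c_{ij}|+|c_{ik}|\ge|c_{jk}|$ is unaffected, so $\tfrac1{|c_{ij}|}+\tfrac1{|c_{ik}|}\ge\tfrac{|c_{jk}|}{|c_{ij}||c_{ik}|}$ still holds; raising both sides to the power $\half$ and invoking the new definition of $a_i^2$ (in its two-index form) gives $a_i^2\ge(|c_{jk}|/(|c_{ij}||c_{ik}|))^{\half}$ for every $j\ne k$. Then the computation \eqref{eq:ProofL1SplitMultipleN-5} goes through with each exponent $\tfrac16$ replaced by $\tfrac14$, since
\[
  \Bigl(\tfrac{|c_{jk}|}{|c_{ij}||c_{ik}|}\Bigr)^{\frac14}
  \Bigl(\tfrac{|c_{ik}|}{|c_{ij}||c_{jk}|}\Bigr)^{\frac14}
  \,=\, \frac{1}{|c_{ij}|^{\half}}
\]
is exactly what is needed to reproduce $\bigl(\sum_i a_i\|\alpha_i\|_{\l1}\bigr)^2\ge\frac{I}{I-1}\sum_i\sum_{j\ne i}|c_{ij}|^{-\half}\|\alpha_i\|_{\l1}\|\alpha_j\|_{\l1}$. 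For the single-index form of $a_i$ the same conclusion follows from $a_ia_j\ge(2/|c_{ij}|)^{\half}\ge|c_{ij}|^{-\half}$, the constant $2$ absorbing the factor that would otherwise appear.

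Next I would carry \eqref{eq:ProofL1SplitMultipleN-1}, \eqref{eq:ProofL1SplitMultipleN-6} and \eqref{eq:ProofL1SplitMultipleN-7} over unchanged apart from the exponent: \eqref{eq:ProofL1SplitMultipleN-1} is insensitive to it, \eqref{eq:ProofL1SplitMultipleN-6} uses the new translation bound together with the geometric inequality just verified, and \eqref{eq:ProofL1SplitMultipleN-7} invokes only H\"older's inequality and \eqref{eq:BasicIneq2}. This produces $4\delta^2\ge\sum_i\bigl(1-4(I-1)a_i^2\|\alpha_i^0\|_{\l0}\bigr)\|\alpha_i-\alpha_i^0\|_2^2$; since every coefficient is positive by hypothesis, discarding all but the $i$-th nonnegative term yields the claimed estimate \eqref{eq:L1SplitMultipleN}.

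I do not expect a genuine obstacle. The argument is structurally identical to theorem~\ref{th:L1SplitMultipleN}, and the only care required is bookkeeping of exponents ($\tfrac16\to\tfrac14$, $\third\to\half$) and confirming that the cancellation in the two-index case and the constant $2$ in the single-index case still deliver exactly $|c_{ij}|^{-\half}$. Both checks are immediate, which is why, exactly as for corollaries~\ref{th:L1SplitWS} and \ref{th:L1SplitMultipleWS}, the proof ought to reduce to the single instruction to replace \eqref{eq:EstTcll} by \eqref{eq:ProofUncertainty3-2} throughout.
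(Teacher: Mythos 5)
Your proof is correct and follows exactly the route the paper intends: just as for corollaries~\ref{th:L1SplitWS} and \ref{th:L1SplitMultipleWS}, the paper's proof of this corollary consists of substituting \eqref{eq:ProofUncertainty3-2} for \eqref{eq:EstTcll} in the proof of theorem~\ref{th:L1SplitMultipleN}, which is what you do. Your additional bookkeeping (the exponents $\tfrac16\to\tfrac14$ and $\third\to\half$ in \eqref{eq:ProofL1SplitMultipleN-5}--\eqref{eq:ProofL1SplitMultipleN-7}, and the verification that both choices of $a_i$ still yield $a_ia_j\ge|c_{ij}|^{-\half}$) simply spells out why that one-line substitution is legitimate.
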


\begin{theorem}
  \label{th:L1CompleteAndSplitMultipleN}
  Suppose that $\gamma^0,\alpha_i^0\in\L2(S^1)$, $c_i\in\R^2$,
  $\Omega\tm S^1$ and $\beta^0\in\L2(\Omega)$, and set
  \begin{equation}
    \label{eq:L1CompleteAndSplitMultipleAssumptionN}
    a_i^2 
    \,=\, \max_{j\not=i} \Bigl(\frac{2}{|c_i-c_j|}\Bigr)^{\third} 
    \qquad\text{or}\qquad
    a_i^2 
    \,=\, \max_{\substack{j\not=i\\k\not=i,j}} 
    \Bigl( \frac{1}{|c_i-c_j|} + \frac{1}{|c_i-c_k|}\Bigr)^{\third} \,,
  \end{equation}
  $i=1,\ldots,I$. 
  Assume that
  \begin{align*}
    \frac{2}{\sqrt{2\pi}} \Bigl(\max_j\frac{1}{a_j}\Bigr) 
    \sum_{i=1}^I a_i \sqrt{|\Omega|\|\alpha_i^0\|_\l0}
    &\,<\, 1 \,,\\
    4(I-1)a_i^2\|\alpha_{i}^{0}\|_{\l0} 
    + \frac{2}{\sqrt{2\pi}} \Bigl(\max_j\frac{1}{a_j}\Bigr) 
    a_i\sqrt{|\Omega|\|\alpha^0_i\|_\l0}
    &\,<\, 1 \qquad \text{for each } i \,,
  \end{align*}
  and
  \begin{equation*}
    \|\gamma^{0}-\beta^0-\sum_{i=1}^I T^*_{c_i}\alpha_i^{0}\|_2
    \,\le\, \delta_0 \qquad \text{for some } \delta_0\geq0 \,.
  \end{equation*}
  If $\delta\geq0$ and $\gamma\in\L2(S^1)$ with 
  \begin{equation*}
    \delta \,\ge\, \delta_0+\|\gamma-\gamma^{0}\|_2
  \end{equation*}
  and
  \begin{equation*}
    (\alpha_1,\ldots,\alpha_I) 
    =\Argmin \sum_{i=1}^I a_i\|\alpha_i\|_{\l1}
    \quad \text{s.t.} \quad 
    {\|\gamma-\beta-\sum_{i=1}^IT^*_{c_i}\alpha_i\|_2\le\delta}\,,\;
    \alpha_i\in\L2(S^1) \,,\; \beta\in\L2(\Omega) \,,
  \end{equation*}
  then
  \begin{align*}
    \|\beta^0-\beta\|_2^2 
    &\,\le\, \biggl(1-\frac{2}{\sqrt{2\pi}} 
    \Bigl(\max_j\frac{1}{a_j}\Bigr) 
    \sum_{i=1}^I a_i \sqrt{|\Omega|\|\alpha_i^0\|_\l0}\biggr)^{-1} 
    4\delta^{2} \,,\\\noalign{and, for $i=1,\ldots,I$}
    \|\alpha^{0}_{i}-\alpha_{i}\|_2^{2} 
    &\,\le\,
    \biggl(1-4(I-1)a_i^2\|\alpha_{i}^{0}\|_{\l0} 
    + \frac{2}{\sqrt{2\pi}} \Bigl(\max_j\frac{1}{a_j}\Bigr) 
    a_i\sqrt{|\Omega|\|\alpha^0_i\|_\l0}\biggr)^{-1} 
    4\delta^{2} \,.
  \end{align*}
\end{theorem}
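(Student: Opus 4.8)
The plan is to combine the weighted-$\l1$ splitting argument from theorem~\ref{th:L1SplitMultipleN} with the data-completion cross-term estimate developed in theorem~\ref{th:L1CompleteAndSplitMultiple}. I would first record feasibility: since $\delta\ge\delta_0+\|\gamma-\gamma^0\|_2$, the triangle inequality shows that $(\alpha_1^0,\dots,\alpha_I^0,\beta^0)$ satisfies the constraint, so optimality of $(\alpha_1,\dots,\alpha_I,\beta)$ in the weighted program gives $\sum_i a_i\|\alpha_i\|_{\l1}\le\sum_i a_i\|\alpha_i^0\|_{\l1}$. Splitting each $\|\alpha_i\|_{\l1}$ over $W_i$ and its complement exactly as in \eqref{eq:ProofL1Split-1}--\eqref{eq:ProofL1Split-3}, but carrying the weights $a_i$, then produces the weighted support inequality \eqref{eq:ProofL1SplitMultipleN-1}, namely $\sum_i a_i\|\alpha_i-\alpha_i^0\|_{\l1}\le 2\sum_i a_i\|\alpha_i-\alpha_i^0\|_{\l1(W_i)}$. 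Note that $\beta$ carries no weight in the objective, so it does not enter this comparison.

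Next I would expand, as in \eqref{eq:ProofL1Split-4} and \eqref{eq:ProofL1CompleteAndSplitMultiple-2}, the lower bound $4\delta^2\ge\|(\beta-\beta^0)+\sum_i T^*_{c_i}(\alpha_i-\alpha_i^0)\|_2^2$ into the diagonal terms $\sum_i\|\alpha_i-\alpha_i^0\|_2^2+\|\beta-\beta^0\|_2^2$, the inter-source cross terms $\sum_i\sum_{j\ne i}|\langle\alpha_i-\alpha_i^0,T^*_{c_j-c_i}(\alpha_j-\alpha_j^0)\rangle|$, and the completion cross terms $2\sum_i|\langle T^*_{c_i}(\alpha_i-\alpha_i^0),\beta-\beta^0\rangle|$. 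The inter-source cross terms are handled verbatim as in \eqref{eq:ProofL1SplitMultipleN-5}--\eqref{eq:ProofL1SplitMultipleN-7}: the triangle-inequality bound $a_i^2\ge(|c_{jk}|/(|c_{ij}||c_{ik}|))^{1/3}$ coming from \eqref{eq:L1CompleteAndSplitMultipleAssumptionN}, combined with \eqref{eq:Uncertainty2}, \eqref{eq:BasicIneq1} and \eqref{eq:BasicIneq2}, yields the bound $4(I-1)\sum_i a_i^2|W_i|\,\|\alpha_i-\alpha_i^0\|_2^2$.

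The main new bookkeeping is the completion cross term, and this is where I expect the only real obstacle. Using H\"older together with \eqref{eq:EstTcLL} and the mapping properties of the Fourier-coefficient operator, I would bound $2\sum_i|\langle T^*_{c_i}(\alpha_i-\alpha_i^0),\beta-\beta^0\rangle|$ by $\tfrac{2}{\sqrt{2\pi}}\bigl(\sum_i\|\alpha_i-\alpha_i^0\|_{\l1}\bigr)\|\beta-\beta^0\|_{\L1}$. The difficulty is that this sum is \emph{unweighted}, whereas \eqref{eq:ProofL1SplitMultipleN-1} only controls the weighted sum; the bridge is $\sum_i\|\alpha_i-\alpha_i^0\|_{\l1}\le\bigl(\max_j\tfrac1{a_j}\bigr)\sum_i a_i\|\alpha_i-\alpha_i^0\|_{\l1}$, which after \eqref{eq:ProofL1SplitMultipleN-1} and H\"older gives $\tfrac{4}{\sqrt{2\pi}}\bigl(\max_j\tfrac1{a_j}\bigr)\bigl(\sum_i a_i\sqrt{|W_i|}\|\alpha_i-\alpha_i^0\|_2\bigr)\sqrt{|\Omega|}\,\|\beta-\beta^0\|_2$. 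This is precisely where the factor $\max_j\tfrac1{a_j}$ in the hypotheses enters. Splitting each product via $2ab\le a^2+b^2$ then distributes it into $\tfrac{2}{\sqrt{2\pi}}\bigl(\max_j\tfrac1{a_j}\bigr)\sum_i a_i\sqrt{|\Omega||W_i|}\bigl(\|\alpha_i-\alpha_i^0\|_2^2+\|\beta-\beta^0\|_2^2\bigr)$.

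Finally I would assemble the pieces. Subtracting both cross-term bounds from the diagonal and recalling $|W_i|=\|\alpha_i^0\|_{\l0}$, the coefficient of $\|\beta-\beta^0\|_2^2$ becomes $1-\tfrac{2}{\sqrt{2\pi}}\bigl(\max_j\tfrac1{a_j}\bigr)\sum_i a_i\sqrt{|\Omega|\|\alpha_i^0\|_{\l0}}$, while the coefficient of $\|\alpha_i-\alpha_i^0\|_2^2$ becomes $1-4(I-1)a_i^2\|\alpha_i^0\|_{\l0}-\tfrac{2}{\sqrt{2\pi}}\bigl(\max_j\tfrac1{a_j}\bigr)a_i\sqrt{|\Omega|\|\alpha_i^0\|_{\l0}}$. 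To isolate the $\beta$ estimate I would keep the $\beta$ diagonal term and discard the (nonnegative, by the second positivity hypothesis) $\alpha$-terms, and symmetrically for each $\alpha_i$, discarding the remaining $\alpha$-terms and the $\beta$-term (nonnegative by the first hypothesis)---the same ``drop the remaining nonnegative terms'' device used throughout section~\ref{sec:L1Corollaries}. The positivity hypotheses guarantee the retained coefficients are strictly positive, and dividing through yields the two stated estimates.
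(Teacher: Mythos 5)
Your proposal is correct and follows essentially the same route as the paper's proof: the weighted support inequality \eqref{eq:ProofL1CompleteAndSplitMultipleN-1}, the expansion of $4\delta^2$ into diagonal, inter-source and completion cross terms, the verbatim reuse of \eqref{eq:ProofL1SplitMultipleN-5}--\eqref{eq:ProofL1SplitMultipleN-7}, and the $\max_j a_j^{-1}$ bridge from the unweighted to the weighted $\l1$ sums followed by $2ab\le a^2+b^2$ are exactly the paper's steps, including the final ``drop the nonnegative remaining terms'' assembly that the paper leaves implicit. Note also that the coefficient you derive, $1-4(I-1)a_i^2\|\alpha_i^0\|_{\l0}-\tfrac{2}{\sqrt{2\pi}}\bigl(\max_j\tfrac1{a_j}\bigr)a_i\sqrt{|\Omega|\|\alpha_i^0\|_{\l0}}$, with a minus sign before the last term, is what the argument (and the positivity hypotheses) actually yield; the plus sign in the stated conclusion of the theorem is evidently a typo, as comparison with theorem~\ref{th:L1CompleteAndSplitMultiple} confirms.
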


\begin{proof}
  Proceeding as in \eqref{eq:ProofL1Split-1}--\eqref{eq:ProofL1Split-3}
  we find that
  \begin{equation}
    \label{eq:ProofL1CompleteAndSplitMultipleN-1}
    \sum_{i=1}^I a_i \|\alpha_i-\alpha_i^0\|_{l^1}
    \,\leq\, 2 \sum_{i=1}^I a_i \|\alpha_i-\alpha_i^0\|_{l^1(W_i)}
  \end{equation}
  with $W_i$ representing the \l0-support of $\alpha_i^0$.
  Applying similar arguments as in \eqref{eq:ProofL1Split-4} we obtain
  \begin{equation}
    \label{eq:ProofL1CompleteAndSplitMultipleN-2}
    \begin{split}
      4\delta^2 
      &\,\geq\, \sum_{i=1}^I\|\alpha_i-\alpha_i^0\|^2_2 
      + \|\beta-\beta^0\|_2^2
      -\sum_{i=1}^I \sum_{j\not=i}|\langle \alpha_i-\alpha_i^0,
      T^*_{c_j-c_i}(\alpha_j-\alpha_j^0)\rangle|\\
      &\phantom{\,\geq\,}
      -2\sum_{i=1}^I|\langle T^*_{c_i}(\alpha_i-\alpha_i^0),
      \beta-\beta^0\rangle|\,.
    \end{split}
  \end{equation}
  The third term on the right hand side of 
  \eqref{eq:ProofL1CompleteAndSplitMultipleN-2} can be estimated as in
  \eqref{eq:ProofL1SplitMultipleN-6}--\eqref{eq:ProofL1SplitMultipleN-7},
  while for the last term we find using H\"older's inequality, the
  mapping properties of the operator which maps $\alpha$ to its
  Fourier coefficients, and \eqref{eq:ProofL1CompleteAndSplitMultipleN-1} 
  that
  \begin{equation*}
    \begin{split}
      2\sum_{i=1}^I|\langle T^*_{c_i}(\alpha_i-\alpha_i^0),
      &\beta-\beta^0\rangle|
      \,\le\, 2 \Bigl(\sum_{i=1}^I\|\alpha_i-\alpha_i^0\|_\Linfty\Bigr)
      \|\beta-\beta^0\|_\L1\\
      &\,\le\, \frac{2}{\sqrt{2\pi}} \max_{j}\frac{1}{a_j}
      \Bigl(\sum_{i=1}^Ia_i\|\alpha_i-\alpha_i^0\|_\l1\Bigr)
      \|\beta-\beta^0\|_\L1\\
      &\,\le\, \frac{4}{\sqrt{2\pi}} \max_{j}\frac{1}{a_j}
      \Bigl(\sum_{i=1}^Ia_i\|\alpha_i-\alpha_i^0\|_{\l1(W_i)}\Bigr)
      \|\beta-\beta^0\|_\L1 \,.
    \end{split}
  \end{equation*}
  Applying H\"older's inequality once more yields
  \begin{equation*}
    \begin{split}
      2\sum_{i=1}^I|\langle T^*_{c_i}(\alpha_i&-\alpha_i^0),
      \beta-\beta^0\rangle|
      \,\le\, \frac{4}{\sqrt{2\pi}} \max_{j}\frac{1}{a_j}
      \Bigl(\sum_{i=1}^Ia_i\sqrt{|W_i|}\|\alpha_i-\alpha_i^0\|_2\Bigr)
      \sqrt{|\Omega|}\|\beta-\beta^0\|_2\\
      &\,=\, \frac{4}{\sqrt{2\pi}} \max_{j}\frac{1}{a_j}
      \sum_{i=1}^I\Bigl(\sqrt{a_i}
      (|\Omega||W_i|)^{\frac14}\|\alpha_i-\alpha_i^0\|_2
      \sqrt{a_i} (|\Omega||W_i|)^{\frac14}\|\beta-\beta^0\|_2\Bigr)\\
      &\,\le\, \frac{2}{\sqrt{2\pi}} \max_{j}\frac{1}{a_j}
      \sum_{i=1}^I\Bigl(a_i(|\Omega||W_i|)^{\frac12}
      \|\alpha_i-\alpha_i^0\|_2^2
      + a_i(|\Omega||W_i|)^{\frac12}\|\beta-\beta^0\|_2^2\Bigr) \,,
    \end{split}
  \end{equation*}
  where $|W_i|=\|\alpha_i^{0}\|_{\l0}$.
  Combining these estimates ends the proof.
\end{proof}

\begin{corollary}
  \label{th:L1CompleteAndSplitMultipleNWS}
  If we add to the hypothesis of
  theorem~\ref{th:L1CompleteAndSplitMultipleN}: 
  \begin{equation*}
    \alpha_i^0,\alpha_{i}\in\l2(-N_{i},N_{i}) \quad \text{for each } i
    \qquad \text{and} \qquad 
    |c_i-c_j|\,>\,2(N_i+N_j+1) \quad \text{for every } j\not=i
  \end{equation*}
  for some $N_1,\ldots,N_I\in\N$, and replace
  \eqref{eq:L1CompleteAndSplitMultipleAssumptionN} with 
  \begin{equation*}
    a_i^2 
    \,=\, \max_{j\not=i} \Bigl(\frac{2}{|c_i-c_j|}\Bigr)^{\half} 
    \qquad\text{or}\qquad
    a_i^2 
    \,=\, \max_{\substack{j\not=i\\k\not=i,j}} 
    \Bigl( \frac{1}{|c_i-c_j|} + \frac{1}{|c_i-c_k|}\Bigr)^{\half}
    \qquad \text{for each } i \,,
  \end{equation*}
  the conclusion remains true.
\end{corollary}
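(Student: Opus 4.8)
The plan is to run the proof of Theorem~\ref{th:L1CompleteAndSplitMultipleN} essentially verbatim, changing only the operator bound used for the cross terms between distinct source components. In that proof the factor $|c_{ij}|^{-\third}$ appearing in \eqref{eq:ProofL1SplitMultipleN-6} comes exclusively from the bound \eqref{eq:EstTcll} of Lemma~\ref{lmm:TcEstimate}, i.e.\ from $\|\Tc\|_{\l1,\l\infty}\le|c|^{-\third}$, applied to each inner product $\langle\alpha_i-\alpha_i^0,T^*_{c_j-c_i}(\alpha_j-\alpha_j^0)\rangle$. Under the additional well-separation hypotheses of the corollary we may instead invoke the sharper estimate \eqref{eq:ProofUncertainty3-2}, $\|\Tc\|_{\l1[-N,N],\l\infty[-M,M]}\le|c|^{-\half}$, which is the content of Theorem~\ref{th:Uncertainty3}. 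Its hypotheses are met: the constraints $\alpha_i,\alpha_i^0\in\l2(-N_i,N_i)$ localize the Fourier supports to $[-N_i,N_i]$, while $|c_i-c_j|>2(N_i+N_j+1)$ supplies precisely the separation $|c|>2(M+N+1)$ (with $M=N_i$, $N=N_j$) required in \eqref{eq:Uncertainty3}. First I would therefore replace every occurrence of $|c_{ij}|^{-\third}$ by $|c_{ij}|^{-\half}$ in the chain \eqref{eq:ProofL1SplitMultipleN-6}, exactly as was already done to pass from Theorem~\ref{th:L1SplitMultipleN} to Corollary~\ref{th:L1SplitMultipleNWS}.

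Next I would check that the new definition of $a_i^2$ (with exponent $\half$ in place of $\third$) keeps the weighted estimate internally consistent. The only place where the exponent is tied to the geometry is the telescoping computation \eqref{eq:ProofL1SplitMultipleN-5}. With the second form of $a_i^2$ the triangle inequality $|c_{ij}|+|c_{ik}|\ge|c_{jk}|$ now yields $a_i^2\ge(|c_{jk}|/(|c_{ij}||c_{ik}|))^{\half}$, and the two factors $(|c_{jk}|/(|c_{ij}||c_{ik}|))^{1/4}$ and $(|c_{ik}|/(|c_{ij}||c_{jk}|))^{1/4}$ multiply to $|c_{ij}|^{-\half}$, matching the new cross-term bound exactly; for the first form one instead uses $a_i^2,a_j^2\ge(2/|c_{ij}|)^{\half}$ directly, giving $a_i^2a_j^2\ge 2/|c_{ij}|$ and hence $a_ia_j\ge|c_{ij}|^{-\half}$. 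In either case the lower bound $(\sum_i a_i\|\alpha_i-\alpha_i^0\|_{\l1})^2\ge\frac{I}{I-1}\sum_{i}\sum_{j\ne i}|c_{ij}|^{-\half}\|\alpha_i-\alpha_i^0\|_{\l1}\|\alpha_j-\alpha_j^0\|_{\l1}$ holds, and the remainder of \eqref{eq:ProofL1SplitMultipleN-6}--\eqref{eq:ProofL1SplitMultipleN-7} goes through unchanged, producing the term $4(I-1)a_i^2\|\alpha_i^0\|_{\l0}$ in the final constant.

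Finally, the missing-data term is untouched. The coupling $2\sum_i|\langle T^*_{c_i}(\alpha_i-\alpha_i^0),\beta-\beta^0\rangle|$ is controlled, exactly as in the proof of Theorem~\ref{th:L1CompleteAndSplitMultipleN}, purely by $\|\Tc\|_{\L\infty,\L\infty}=1$ together with the mapping properties of the Fourier-coefficient operator, which produce the factor $\frac{2}{\sqrt{2\pi}}\Bigl(\max_j\frac{1}{a_j}\Bigr)a_i\sqrt{|\Omega|\|\alpha_i^0\|_{\l0}}$; none of this depends on the exponent hidden in $a_i$. Combining the modified cross-term estimate with this unchanged coupling estimate and dropping the appropriate squared remainders yields the two asserted inequalities with $\third$ replaced by $\half$. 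The main obstacle is not analytic but bookkeeping: one must confirm that the substitution $\third\mapsto\half$ is made \emph{simultaneously} in the definition \eqref{eq:L1CompleteAndSplitMultipleAssumptionN} of $a_i$ and in the operator bound, so that the telescoping in \eqref{eq:ProofL1SplitMultipleN-5} still closes; once this consistency is verified, every remaining step is identical to the proof of Theorem~\ref{th:L1CompleteAndSplitMultipleN}.
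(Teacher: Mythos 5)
Your proposal is correct and matches the paper's intended argument exactly: the paper leaves this corollary without a printed proof precisely because, as in corollaries~\ref{th:L1SplitWS}, \ref{th:L1SplitMultipleWS} and \ref{th:L1SplitMultipleNWS}, the proof consists of replacing \eqref{eq:EstTcll} by \eqref{eq:ProofUncertainty3-2} in the cross-term estimates of \eqref{eq:ProofL1SplitMultipleN-6} while leaving the $\beta$-coupling term untouched. Your additional verification that the redefined weights $a_i$ (with exponent $\half$) still close the telescoping step \eqref{eq:ProofL1SplitMultipleN-5} is exactly the consistency check the paper takes for granted.
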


\section{Conditioning, resolution, and  wavelength}
\label{sec:Conditioning}
So far, we have suppressed the dependence on the wavenumber $k$. 
We restore it here, and consider the consequences related to
conditioning and resolution. 
We confine our discussion to theorem~\ref{th:L2SplitLS}, assuming that
the $\gamma^j$, $j=1,2$, represent far fields that are radiated by
superpositions of limited power sources supported in balls
$B_{R_i}(c_i)$, $i=1,2$, and that accordingly, for $k=1$ (following
our discussion at the end of section~\ref{sec:RegPicard}), the numbers
$N_i \gtrsim R_i$ are just a little bigger than the radii of these
balls. 
This becomes $N_i \gtrsim kR_i$ when we return to conventional units,
and the estimate \eqref{eq:L2SplitLS} then depends on the quantity
\begin{equation}
  \label{eq:Conditioning1}
  \frac{(2N_1+1)(2N_2+1)}{k|c_1-c_2|} \,.
\end{equation}

Writing $V_i := T_{c_i}^* \l2(-N_i,N_i)$ and denoting by 
$P_i: \l2 \to \l2$ the orthogonal projection onto $V_i$, $i=1,2$, we
have $V_1\cap V_2 = \{0\}$ if $c_1\not= c_2$, and the angle
$\theta_{12}$ between these subspaces is given by
\begin{equation*}
  \cos \theta_{12}
  \,=\, \sup_{\substack{\alpha_1\in V_1\\\alpha_2\in V_2}}
  \frac{|\langle\alpha_1,\alpha_2\rangle|}{\|\alpha_1\|_2\|\alpha_2\|_2}
  \,=\, \sup_{\alpha_1,\alpha_2\in \l2} 
  \frac{|\langle P_1\alpha_1,P_2\alpha_2\rangle|}{\|\alpha_1\|_2\|\alpha_2\|_2}
  \,=\, \|P_1P_2\|_{\l2,\l2} \,.
\end{equation*}
A glance at the proof of lemma~\ref{lm:L2Split} reveals that the
square root of \eqref{eq:Conditioning1} is just  a lower bound for this
cosine. 
Furthermore, the least squares solutions to \eqref{eq:split2LS} can be
constructed from simple formulas
\begin{align*}
  \alpha_1^j 
  &\,=\, (I-P_1P_2)^{-1}P_1(I-P_2) \gamma^j
  \,=:\, P_{1|2} \gamma^j \,,\\
  \alpha_2^j 
  &\,=\, (I-P_2P_1)^{-1}P_2(I-P_1) \gamma^j
  \,=:\, P_{2|1} \gamma^j \,,
\end{align*}
where $P_{1|2}$ and $P_{2|1}$ denote the projection onto $V_1$ along
$V_2$ and vice versa.
These satisfy
\begin{equation*}
  \|P_{1|2}\|_{\l2,\l2}
  \,=\, \|P_{2|1}\|_{\l2,\l2}
  \,=\, \csc\theta_{12}
  \,=\, \Bigl(\frac1{1-\cos^2\theta_{12}}\Bigr)^{1/2} \,.
\end{equation*}
Consequently $\csc\theta_{12}$ is the absolute condition number for
the splitting problem \eqref{eq:split2LS}, and
Theorem~\ref{th:L2SplitLS} (with our choice of $N_1$ and $N_2$)
essentially says that 
\begin{equation}
  \label{eq:CscEstimate}
  \csc(\theta_{12}) 
  \,\le\, \frac{1}{\sqrt{1-\frac{(2N_1+1)(2N_2+1)}{k|c_1-c_2|}}}
  \,\lesssim\, \frac{1}{\sqrt{1-\frac{(2kR_1+1)(2kR_2+1)}{k|c_1-c_2|}}} \,.
\end{equation}

We will include an example below to show that, at least for large
distances, the dependence on $k$ in estimate in \eqref{eq:CscEstimate}
is sharp. 
This means that, for a fixed geometry $((c_1,R_1),(c_2,R_2))$, the
condition number increases with $k$. 
Because resolution is proportional to
wavelength, this means that we cannot increase resolution by simply
increasing the wavenumber without increasing the dynamic range of the
sensors (i.e.\ the number of significant figures in the measured
data). 
Note that as $k$ increases, the dimensions of the subspaces 
$V_{i} = T_{c_i}^*\l2(-N_i,N_i) \approx T_{c_i}^*\l2(-kR_{i},kR_{i})$
increase. 
The increase in the number of significant Fourier coefficients
(non-evanescent Fourier modes) is the way we see higher resolution in
this problem. 

The situation changes considerably if we replace the limited power
source radiated from $B_{R_1}(c_1)$ by a point source with singularity
in $c_1$. 
Then we can choose for $V_1$ a one-dimensional subspace of $\l2$
(spanned by the zeroth order Fourier mode translated by $T_{c_1}^*$),
and accordingly set $N_1=R_1=0$. 
Consequently, the estimate \eqref{eq:CscEstimate} reduces to
\begin{equation}
  \label{eq:LinSampEstimate}
  \csc(\theta_{12}) 
  \,\le\, \frac{1}{\sqrt{1-\frac{2N_2+1}{k|c_1-c_2|}}}
  \,\lesssim\, \frac{1}{\sqrt{1-\frac{2kR_2+1}{k|c_1-c_2|}}} \,.
\end{equation}
Since numerator and denominator have the same units, the conditioning
of the splitting operator does not depend on $k$ in this case. 

This has immediate consequences for the inverse scattering problem:
Qualitative reconstruction methods like the linear sampling method
\cite{CakCol14} or the factorization method \cite{KirGri08} determine
the support of an unknown scatterer by testing pointwise whether the
far field of a point source belongs to the range of a certain
restricted far field operator, mapping sources supported inside the
scatterer to their radiated far field.
The inequality \eqref{eq:LinSampEstimate} indeed shows that (using
these qualitative reconstruction algorithms for the inverse scattering
problem) one can increase resolution by simply increasing the wave
number.

Finally, if we replace both sources by point sources with
singularities in $c_1$ and $c_2$, respectively, then we can choose
both subspaces $V_1$ and $V_2$ to be one-dimensional, and accordingly
set $N_1=N_2=R_1=R_2=0$. 
The estimate  \eqref{eq:CscEstimate} reduces to
\begin{equation}
  \label{eq:MUSICEstimate}
  \csc(\theta_{12}) 
  \,\le\, \frac{1}{\sqrt{1-\frac{1}{k|c_1-c_2|}}} \,,
\end{equation}
i.e., in this case the conditioning of the splitting operator improves
with increasing wave number $k$. 
MUSIC-type reconstruction methods \cite{Dev12} for inverse scattering
problems with infinitesimally small scatterers recover the locations
of a collection of unknown small scatterers by testing pointwise
whether the far field of a point source belongs to the range of a
certain restricted far field operator, mapping point sources with
singularities at the positions of the small scatterers to their
radiated far field. 
From \eqref{eq:MUSICEstimate} we conclude that (using MUSIC-type
reconstruction algorithms for the inverse scattering problem with
infinitesimally small scatterers) on can increase resolution by simply
increasing the wave number and the reconstruction becomes more stable
for higher frequencies.

\section{An analytic example}
\label{sec:AnalyticExample}
The example below illustrates that the estimate of
the cosine of the angle between two far fields radiated by two sources
supported in balls $B_{R_1}(c_1)$ and $B_{R_2}(c_2)$, respectively,
cannot be better than proportional to the quantity
\begin{equation*}
  \sqrt{\frac{kR_1R_2}{|c_1-c_2|}} \,.
\end{equation*}

As pointed out in the previous section, we need only
construct the example for $k=1$. We will let $f$ be a
single layer source supported on a horizontal line segment
of width $W$, and $g$ be the same source, translated
vertically by a distance $d$ (i.e., $c_1=(0,0)$ and
$c_2=(0,d)$).  Specifically, with $H$ denoting the
Heavyside or indicator function, and $\delta$ the dirac
mass:
\begin{align*}
  f &\,=\, \frac{1}{\sqrt{W}}H_{|x|<W}\delta_{y=0}\\
  g &\,=\, \frac{1}{\sqrt{W}}H_{|x|<W}\delta_{y=d}
\end{align*}
The far fields radiated by $f$ and $g$ are:
\begin{align*}
  \alpha_{f}(\theta) 
  &\,=\, \Fcal f 
  \,=\, 2\frac{\sin(W\cos t)}{\sqrt{W}\cos t} 
  \\
  \alpha_{g}(\theta) 
  &\,=\, \Fcal g
  \,=\, e^{-\rmi d\sin t} \, 
  2\frac{\sin(W\cos t)}{\sqrt{W}\cos t} 
\end{align*}
for $\theta = (\cos t, \sin t) \in S^1$.
Accordingly
\begin{equation*}
  \begin{split}
    \|\alpha_{f}\|_2^{2} 
    \,=\, \|\alpha_{g}\|_2^{2}
    &\,=\, 4 \int_{0}^{2\pi} \frac{\sin^{2}(W\cos t)}
    {(W\cos t)^{2}} W \dt
    \,=\, 8 \int_{-W}^{W} \frac{\sin^{2}(\xi)}
    {\xi^{2}}\frac{1}{\sqrt{1-\xi^{2}}} \dxi\\
    &\,\ge\, 8 \int_{-W}^{W} \frac{\sin^{2}(\xi)}{\xi^{2}}\dxi
    \,=\, 8 \int_{-\infty}^{\infty} \frac{\sin^{2}(\xi)}{\xi^{2}}\dxi 
    - 16\int_W^{\infty} \frac{\sin^{2}(\xi)}{\xi^{2}}\dxi \,,
  \end{split}
\end{equation*}
and we can evaluate the first integral on the right hand side using
the Plancherel equality as $\frac{\sin\xi}{2\xi}$ is the Fourier
transform of the characteristic function of the interval $[-1,1]$, and
estimate the second, yielding 
\begin{equation*}
  \|\alpha_{f}\|_2^{2}
  \,\ge\, 8\Bigl(\pi- \frac{2}{W}\Bigr) \,.
\end{equation*}

On the other hand, for  $d \gg W$, according to the principle of
stationary phase (there are stationary points at  $\pm\frac{\pi}{2}$)
\begin{equation*}
  \begin{split}
    \langle \alpha_{f},\alpha_{g} \rangle 
    &\,=\, 4W \int_{0}^{2\pi} \frac{\sin^{2}(W\cos t)}
    {(W\cos t)^2} e^{-\rmi d\sin t} \dt
    \,=\, 8 \sqrt{2\pi} \frac{W}{\sqrt{d}} \cos\Bigl(d-\frac{\pi}{4}\Bigr)
    + O(d^{-\frac{3}{2}}) \,,
  \end{split}
\end{equation*}
which shows that for $d \gg W \gg 1$
\begin{equation*}
  \frac{\langle\alpha_{f},\alpha_{g}\rangle}
  {\|\alpha_{f}\|_2\|\alpha_{g}\|_2}
  \,\approx\, \sqrt{\frac{2}{\pi}} \frac{W}{\sqrt{d}}
  \cos\Bigl(d-\frac{\pi}{4}\Bigr) \,,
\end{equation*}
which decays no faster than that predicted by
theorem~\ref{th:L2SplitLS}.

\section{Numerical examples}
\label{sec:NumericalExamples}
Next we consider the numerical implementation of the \l2 approach from
section~\ref{sec:L2Corollaries} and the \l1 approach from
section~\ref{sec:L1Corollaries} for far field splitting and data
completion simultaneously
(cf.~theorem~\ref{th:L2CompleteAndSplitMultipleLS} and
theorem~\ref{th:L1CompleteAndSplitMultiple}). 
Since both schemes are extensions of corresponding algorithms for far
field splitting as described in \cite{GriHanSyl14} (least squares) and
\cite{GriSyl16} (basis pursuit), we just briefly comment on
modifications that have to be made to include data completion and
refer to \cite{GriHanSyl14,GriSyl16} for further details. 

Given a far field $\alpha = \sum_{i=1}^I T_{c_i}^*\alpha_i$ that is a
superposition of far fields $T_{c_i}^*\alpha_i$ radiated from balls
$B_{R_i}(c_i)$, for some $c_i\in\R^2$ and $R_i>0$, we assume in the
following that we are unable to observe all of $\alpha$ and that a
subset $\Omega\tm S^1$ is unobserved. 
The aim is to recover $\alpha|_\Omega$ from
$\alpha|_{S^1\setminus\Omega}$ and a priori information on the
location of the supports of the individual source components
$B_{R_i}(c_i)$, $i=1,\ldots,I$. 

We first consider the \l2 approach from
section~\ref{sec:L2Corollaries} and write 
$\gamma := \alpha|_{S^1\setminus\Omega}$ for the observed far field
data and $\beta := -\alpha|_\Omega$.
Accordingly, 
\begin{equation*}
  \gamma \,=\,  \beta + \sum_{i=1}^I T_{c_i}^*\alpha_i \,,
\end{equation*}
i.e., we are in the setting of 
theorem~\ref{th:L2CompleteAndSplitMultipleLS}.
Using the shorthand $V_\Omega:=L^2(\Omega)$ and
$V_i:=T_{c_i}^*\l2(-N_i,N_i)$, $i=1,\ldots,I$, the least squares
problem \eqref{eq:L2CompleteAndSplitMultipleLS-LSP} is equivalent to
seeking approximations $\betatilde\in V_\Omega$ and
$\alphatilde_i\in\l2(-N_i,N_i)$, $i=1,\ldots,I$, satisfying the
Galerkin condition 
\begin{equation}
  \label{eq:NumEx2}
  \langle\betatilde+T_{c_1}^*\alphatilde_1+\cdots
  +T_{c_I}^*\alphatilde_I,\phi\rangle 
  \,=\, \langle\gamma,\phi\rangle \qquad \text{for all } \phi\in
  V_\Omega \oplus V_1 \oplus \cdots \oplus V_I \,.
\end{equation}
The size of the individual subspaces depends on the a priori
information on $R_1,\ldots,R_I$.
Following our discussion at the end of section~\ref{sec:RegPicard} we
choose $N_j = \frac{e}{2}kR_j$ in our numerical example below.
Denoting by $P_\Omega$ and $P_1,\ldots,P_I$ the orthogonal projections
onto $V_\Omega$ and $V_1,\ldots,V_I$, respectively, \eqref{eq:NumEx2}
is equivalent to the linear system 
\begin{equation}
  \label{eq:NumEx3}
  \begin{split}
    \betatilde + P_{\Omega}P_1T_{c_1}^*\alphatilde_1 + \cdots 
    + P_{\Omega}P_IT_{c_I}^*\alphatilde_I
    &\,=\, 0 \,,\\
    P_1P_\Omega\betatilde + T_{c_1}^*\alphatilde_1 + \cdots 
    + P_1P_IT_{c_I}^*\alphatilde_I
    &\,=\, P_1\gamma \,,\\
    & \phantom{x} \,\vdots \\[1ex]
    P_IP_\Omega\betatilde + P_IP_1T_{c_1}^*\alphatilde_1 + \cdots 
    + T_{c_I}^*\alphatilde_I
    &\,=\, P_I\gamma \,.
  \end{split}
\end{equation}
Explicit matrix representations of the individual matrix blocks in
\eqref{eq:NumEx3} follow directly from
\eqref{eq:TcConvolution}--\eqref{eq:DefTcAbuse} (see
\cite[lemma~3.3]{GriHanSyl14} for details) for $P_1,\ldots,P_I$ and
by applying a discrete Fourier transform to the characteristic
function on $S^1\setminus\Omega$ for $P_\Omega$.
Accordingly, the block matrix corresponding to the entire linear system
can be assembled, and the linear system can be solved directly. 
The estimates from theorem~\ref{th:L2CompleteAndSplitMultipleLS} give
bounds on the absolute condition number of the system matrix.

The main advantage of the \l1 approach from
section~\ref{sec:L1Corollaries} is that no a priori information on the
radii $R_i$ of the balls $B_{R_i}(c_i)$, $i=1,\ldots,I$, containing the
individual source components is required. 
However, we still assume that a priori knowledge of the centers
$c_1,\ldots,c_I$ of such balls is available.
Using the orthogonal projection $P_\Omega$ onto $L^2(\Omega)$, the
basis pursuit formulation from
theorem~\ref{th:L1CompleteAndSplitMultiple} can be rewritten as 
\begin{equation}
  \label{eq:NumExa4}
  (\alphatilde_1,\ldots,\alphatilde_I) 
  =\Argmin \sum_{i=1}^I \|\alpha_i\|_{\l1}
  \quad \text{s.t.} \quad 
  {\|\gamma-P_\Omega(\sum_{i=1}^IT_{c_i}^*\alpha_i)\|_2\le\delta}\,,\;
  \alpha_i\in\L2(S^1) \,.
\end{equation}
Accordingly, $\betatilde :=
\sum_{i=1}^I(T_{c_i}^*\alphatilde_i)|_{\Omega}$ is an 
approximation of the missing data segment.
It is well known that the minimization problem from \eqref{eq:NumExa4}
is equivalent to minimizing the Tikhonov functional
\begin{equation}
  \label{eq:NumExa5} 
  \Psi_{\mu}(\alpha_1,\ldots,\alpha_I) 
  \,=\, \|\gamma-P_{\Omega}(\sum_{i=1}^IT_{c_i}^*\alpha_i)\|_{\ell^2}^2 
  + \mu \sum_{i=1}^I \|\alpha_i\|_{\ell^1} \,, \qquad 
  [\alpha_1,\ldots,\alpha_m] \in \ell^2\times\cdots\times\ell^2 \,,
\end{equation} 
for a suitably chosen regularization parameter $\mu>0$ 
(see, e.g., \cite[proposition~2.2]{GraHalSch11}).
The unique minimizer of this functional can be approximated using
(fast) iterative soft thresholding (cf.~\cite{BecTeb09,DauDefDeM04}). 
Apart from the projection $P_\Omega$, which can be implemented
straightforwardly, our numerical implementation analogously to the
implementation for the splitting problem described in \cite{GriSyl16},
and also the convergence analysis from \cite{GriSyl16} carries
over.\footnote{In \cite{GriSyl16} we used additional weights in the
  \l1 minimization problem to ensure that its solution indeed gives
  the exact far field split.  Here we don't use these weights, but our
  estimates from section~\ref{sec:L1Corollaries} imply that the solution
  of \eqref{eq:NumExa4} and \eqref{eq:NumExa5} is very close to the true
  split.}

\begin{example}
  We consider a scattering problem with three obstacles
  as shown in figure~\ref{fig:NumExa1} (left), which are illuminated by
  a plane wave $u^i(x) = e^{\rmi k x\cdot d}$, $x\in\R$, with incident
  direction $d=(1,0)$ and wave number $k=1$ (i.e., the wave length is
  $\lambda =2\pi \approx 6.28$).
  \begin{figure}
    \centering
    \includegraphics[height=5.0cm]{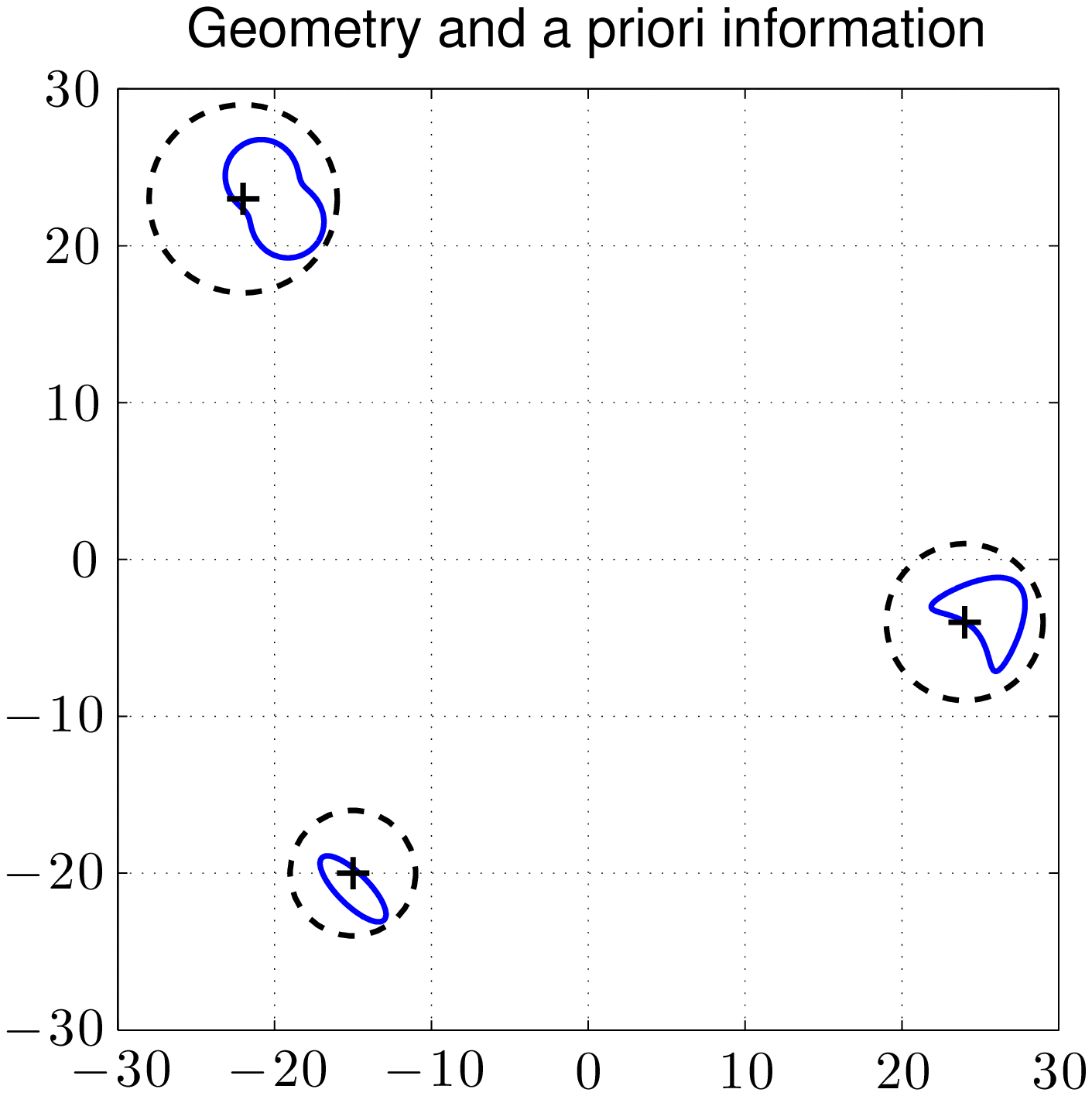}
    \qquad
    \includegraphics[height=5.0cm]{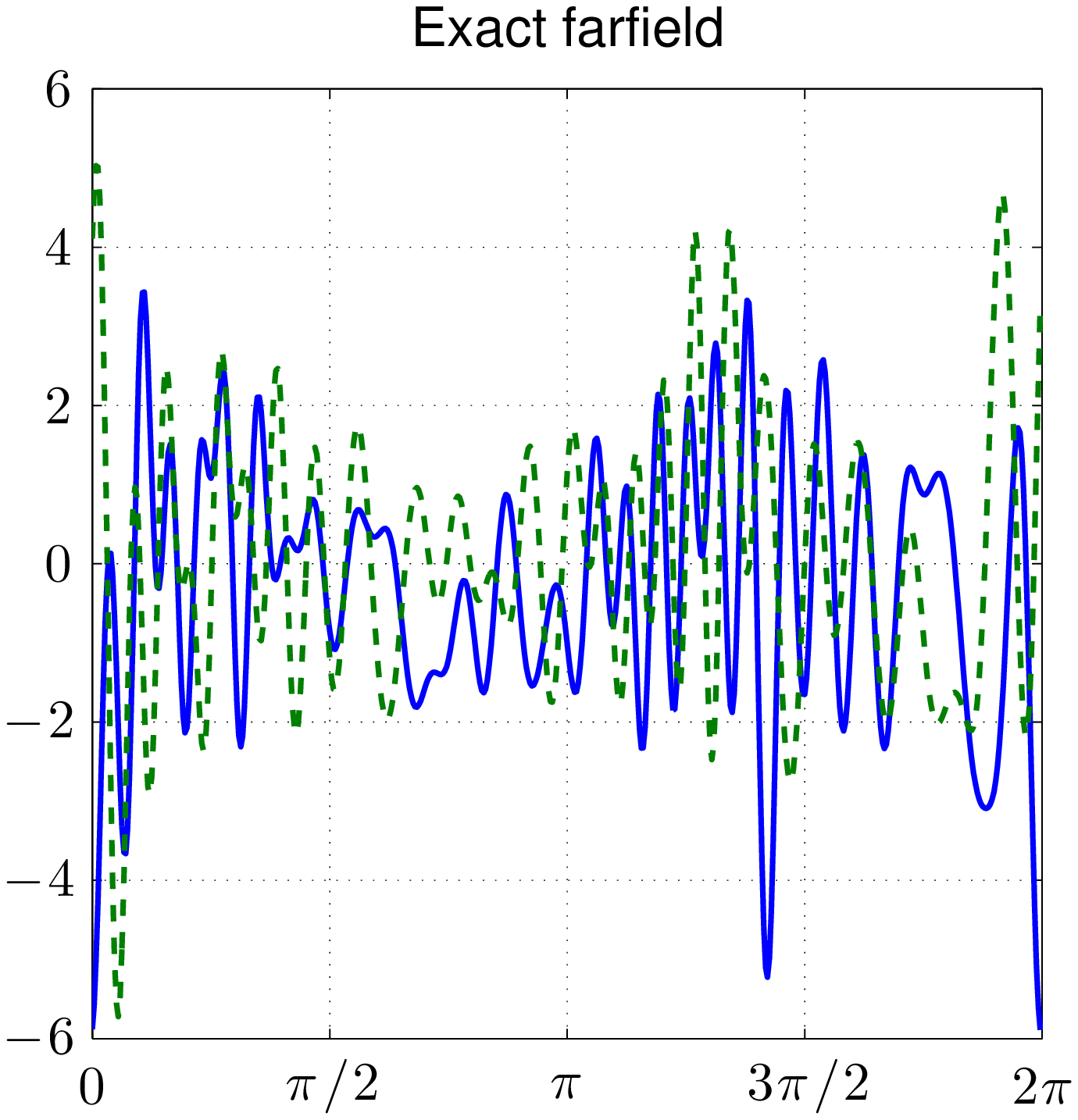}
    \caption{\small Left: Geometry of the scatterers (solid) and a
      priori information on the source locations (dashed).
      Right: Real part (solid) and imaginary part (dashed) of the far
      field $\alpha$.} 
    \label{fig:NumExa1}
  \end{figure}
  Assuming that the ellipse is sound soft whereas the kite and the nut
  are sound hard, the scattered field $u^s$ satisfies the homogeneous
  Helmholtz equation outside the obstacles, the Sommerfeld radiation
  condition at infinity, and Dirichlet (ellipse) or Neumann boundary
  conditions (kite and nut) on the boundaries of the obstacles. 
  We simulate the corresponding far field $\alpha$ of $u^s$ on an
  equidistant grid with $512$ points on the unit sphere $S^1$ using a
  Nystr\"om method (cf.\ \cite{ColKre98,Kre95}).
  Figure~\ref{fig:NumExa1} (middle) shows the real part (solid line) and
  the imaginary part (dashed line) of $\alpha$. 
  Since the far field $\alpha$ can be written as a superposition 
  of three far fields radiated by three individual smooth sources
  supported in arbitrarily small neighborhoods of the scattering
  obstacles (cf., e.g., \cite[lemma~3.6]{KusSyl05}), this example fits
  into the framework of the previous sections. 

  We assume that the far field cannot be measured on the segment
  \begin{equation*}
    \Omega 
    \,=\, \{ \theta = (\cos t, \sin t) \in S^1 \;|\; 
    \pi/2 < t< \pi/2+\pi/3 \} \,,
  \end{equation*}
  i.e., $|\Omega| = \pi/3$.
  We first apply the least squares procedure and use the dashed circles
  shown in figure~\ref{fig:NumExa1} (left) as a priori information on
  the approximate source locations $B_{R_i}(c_i)$, $i=1,2,3$.
  More precisely, $c_1 = (24,-4)$, $c_2 = (-22,23)$, $c_3=(-15,-20)$ and
  $R_1=5$, $R_2=6$ and $R_3=4$.
  Accordingly we choose $N_1=7$, $N_2=9$ and $N_3=6$, and solve the
  linear system \eqref{eq:NumEx3}. 

  \begin{figure}
    \centering
    \includegraphics[height=5.0cm]{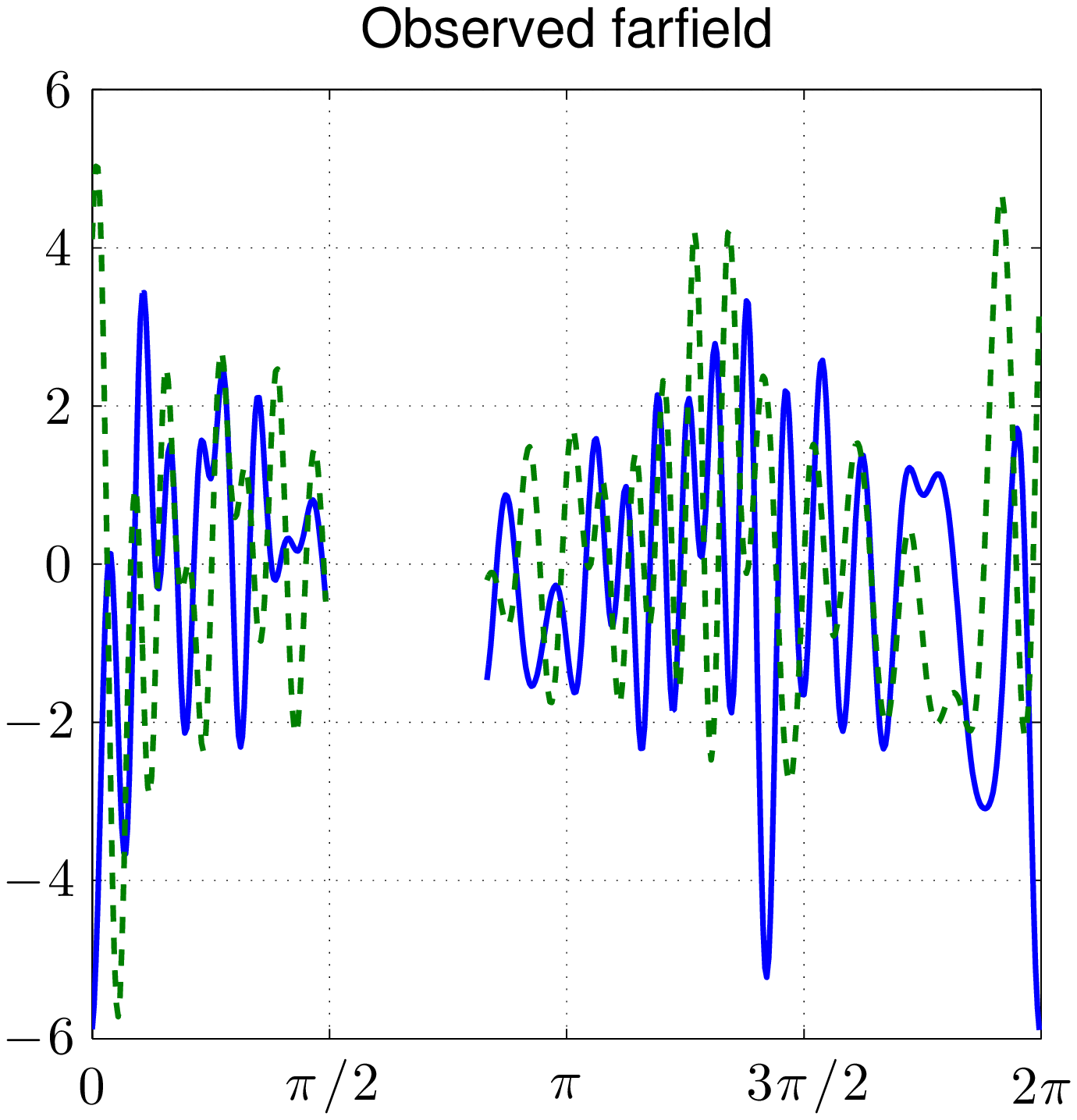}
    \quad
    \includegraphics[height=5.0cm]{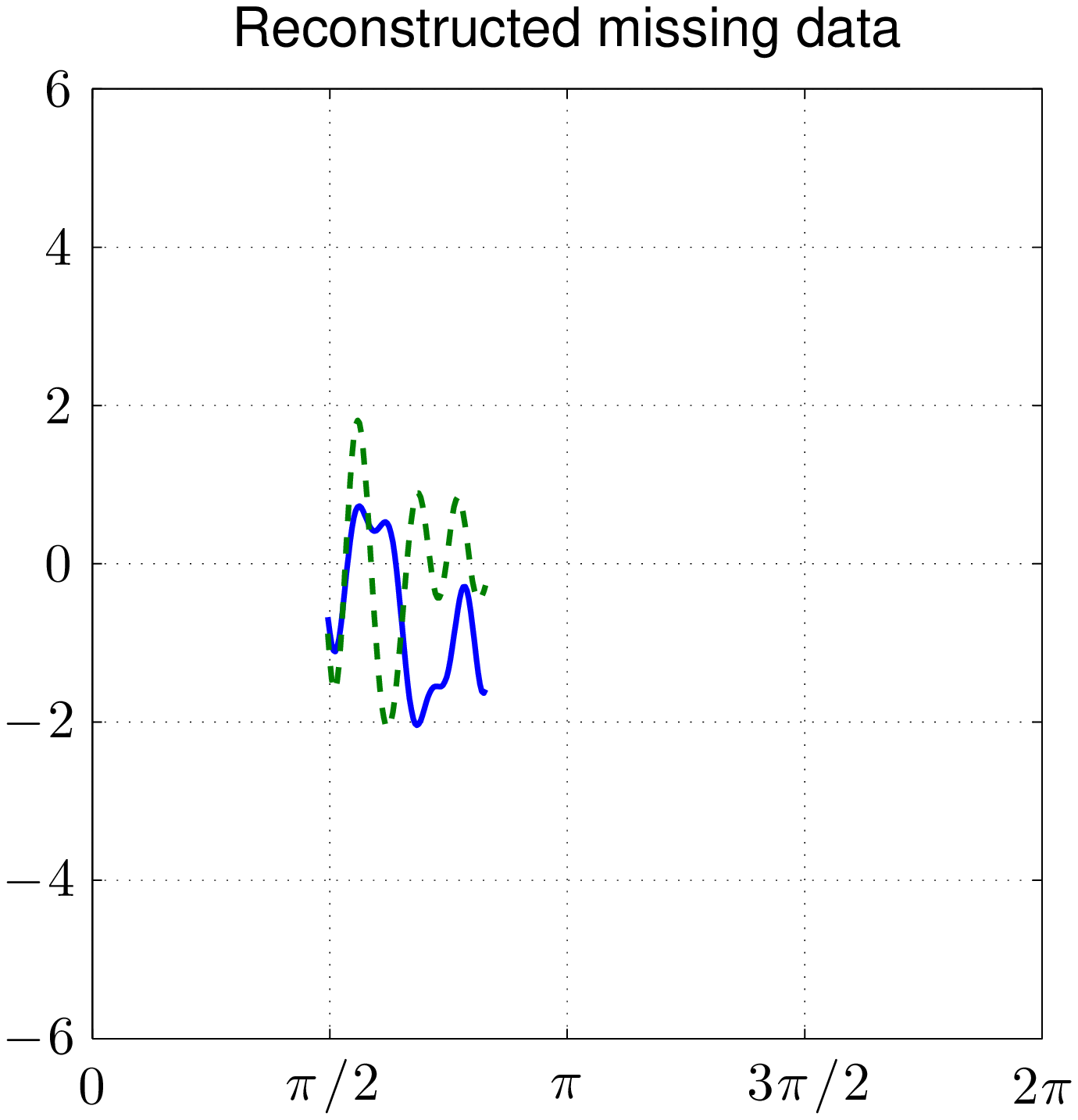}
    \quad
    \includegraphics[height=5.0cm]{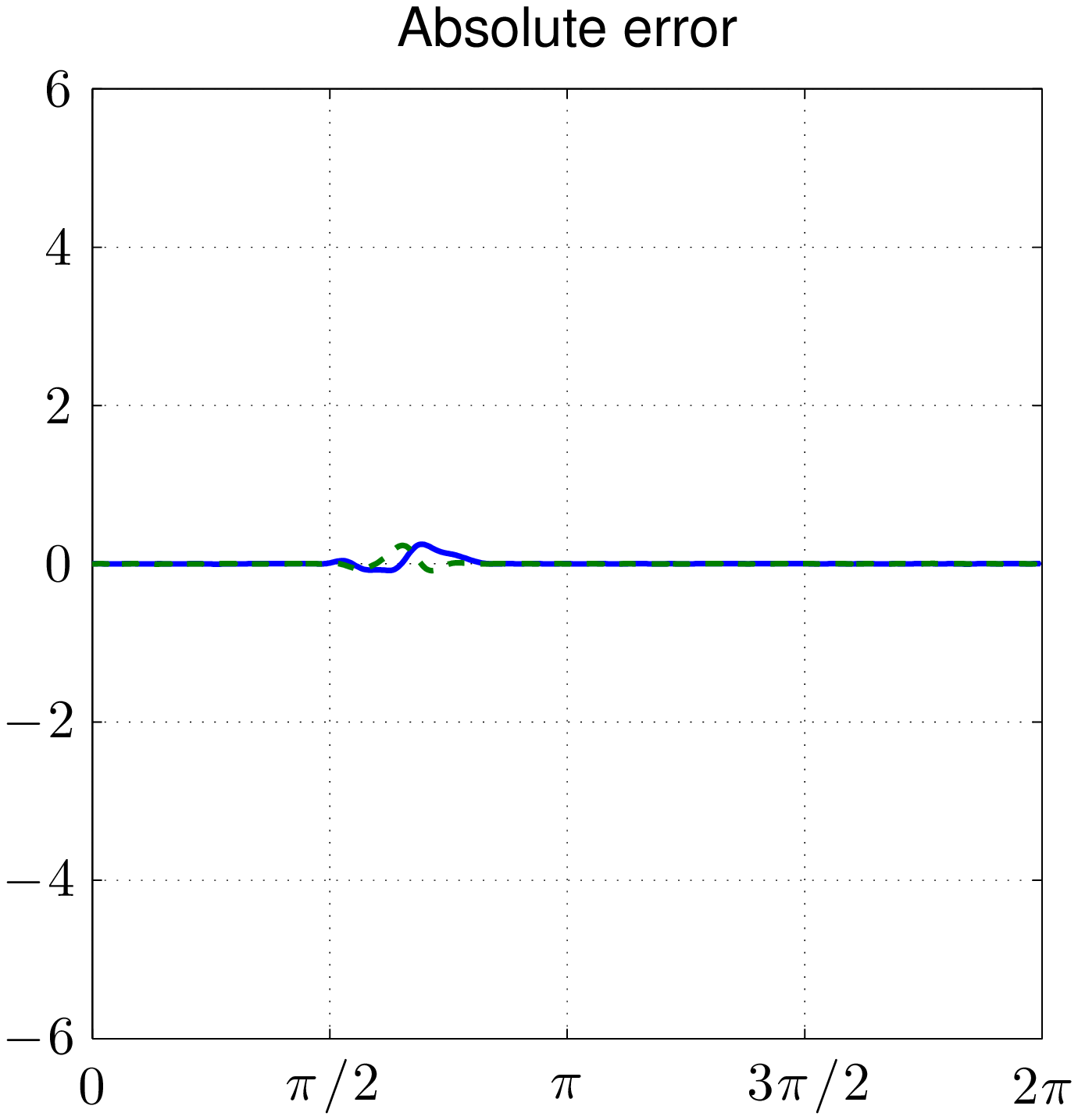}
    \caption{\small Reconstruction of the least squares scheme: 
      Observed far field $\gamma$ (left), reconstruction of the missing
      part $\alpha|_\Omega$ (middle), and difference between exact far
      field and reconstructed far field (right).} 
    \label{fig:NumExa2}
  \end{figure}
  Figure~\ref{fig:NumExa2} shows a plot of the observed data $\gamma$
  (left), of the reconstruction of the missing data segment obtained by
  the least squares algorithm and of the difference between the exact
  far field and the reconstructed far field.
  Again the solid line corresponds to the real part while the dashed
  line corresponds to the imaginary part.  
  The condition number of the matrix is $5.4 \times 10^4$.
  We note that the missing data component in this example is actually
  too large for the assumptions of
  theorem~\ref{th:L2CompleteAndSplitMultipleLS} to be satisfied.
  Nevertheless the least squares approach still gives good results.

  Applying the (fast) iterative soft shrinkage algorithm to this example
  (with regularization parameter $\mu = 10^{-3}$ in \eqref{eq:NumExa5})
  does not give a useful reconstruction.
  As indicated by the estimates in
  theorem~\ref{th:L1CompleteAndSplitMultiple} the \l1 approach seems to
  be a bit less stable.
  Hence we halve the missing data segment, consider in the following 
  \begin{equation*}
    \Omega 
    \,=\, \{ \theta = (\cos t, \sin t) \in S^1 \;|\; 
    \pi/2 < t< \pi/2+\pi/6 \} \,,
  \end{equation*}
  i.e., $|\Omega| = \pi/6$, and apply the \l1 reconstruction scheme to
  this data.
  \begin{figure}
    \centering
    \includegraphics[height=5.0cm]{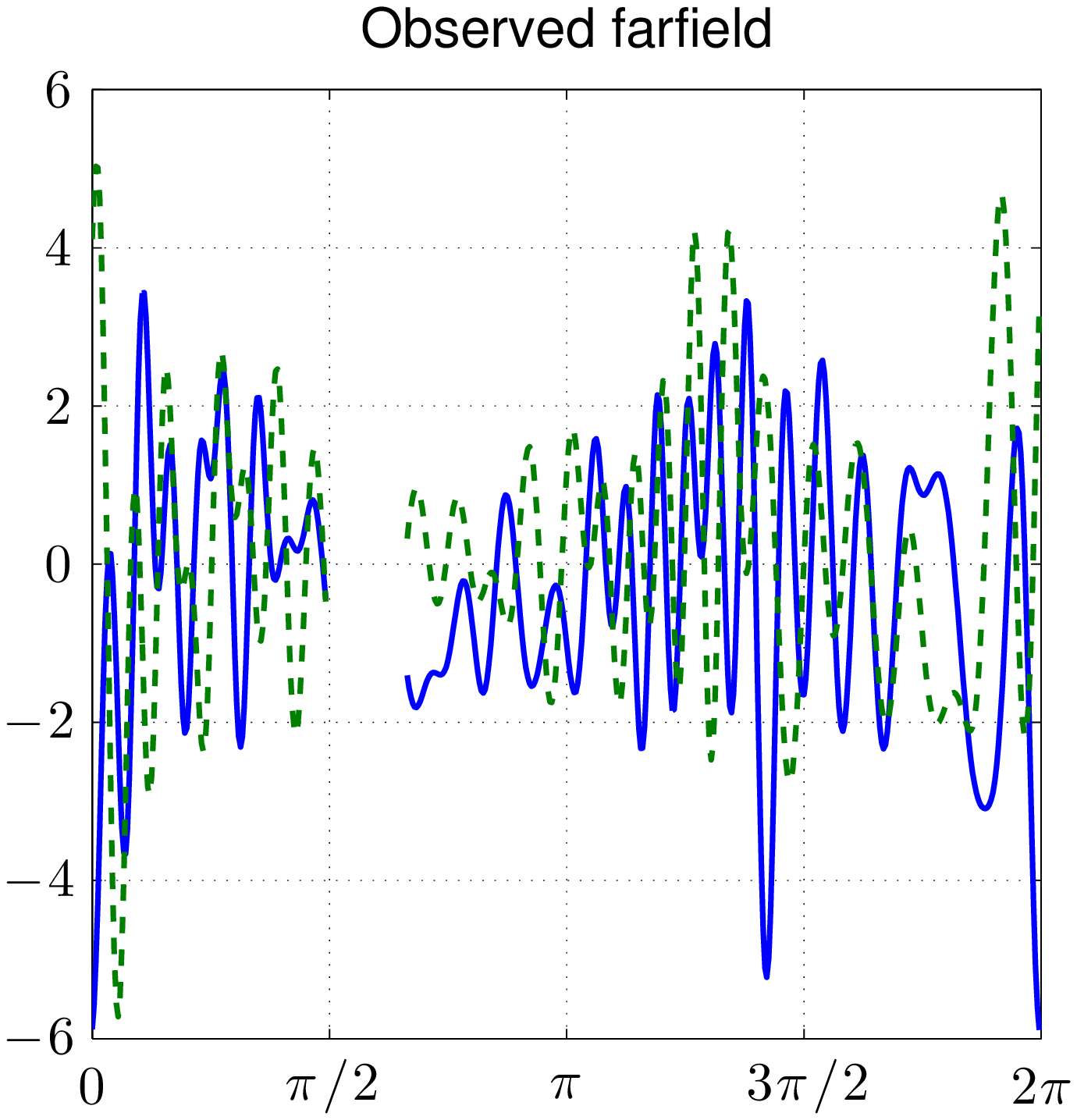}
    \quad
    \includegraphics[height=5.0cm]{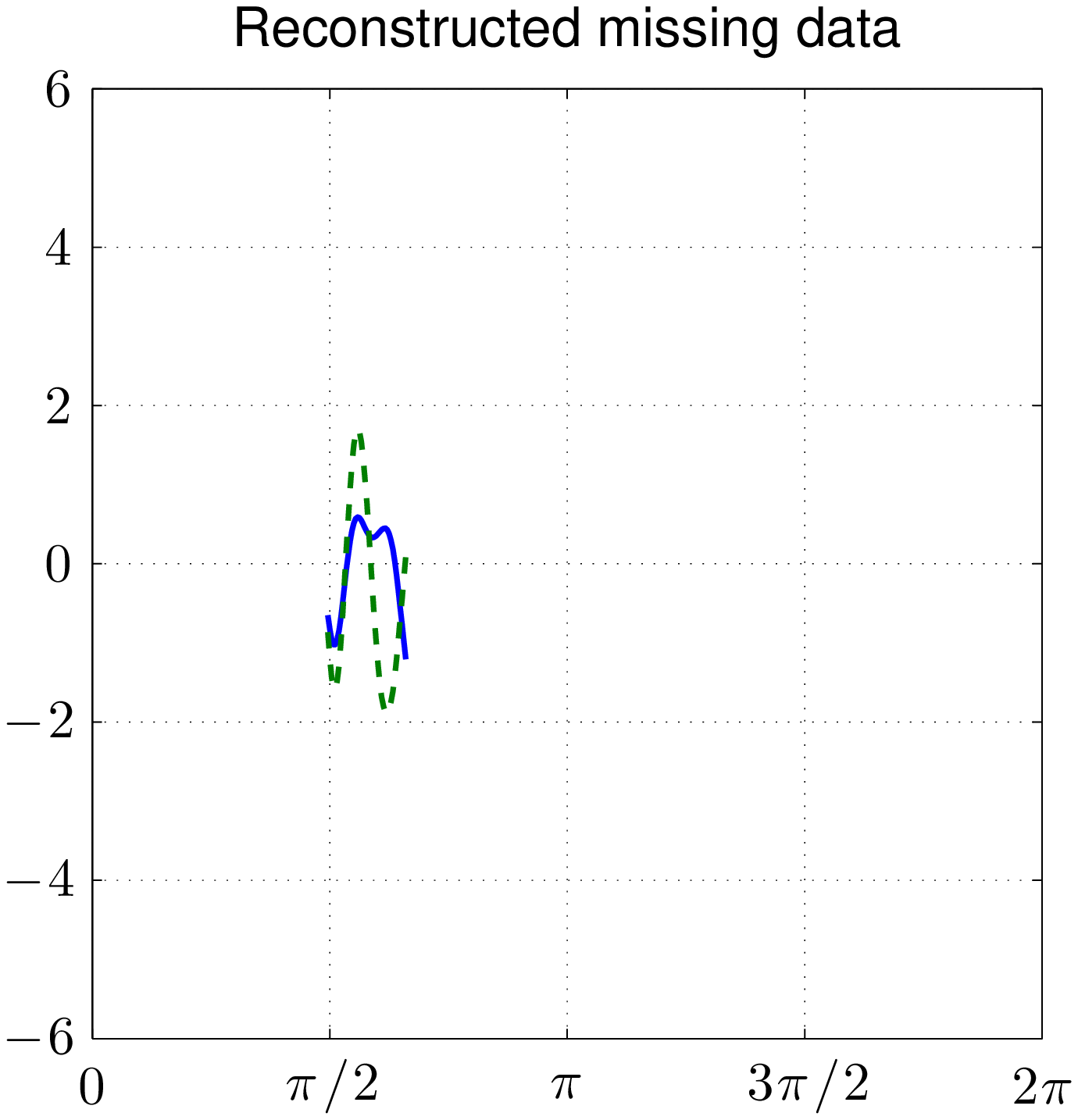}
    \quad
    \includegraphics[height=5.0cm]{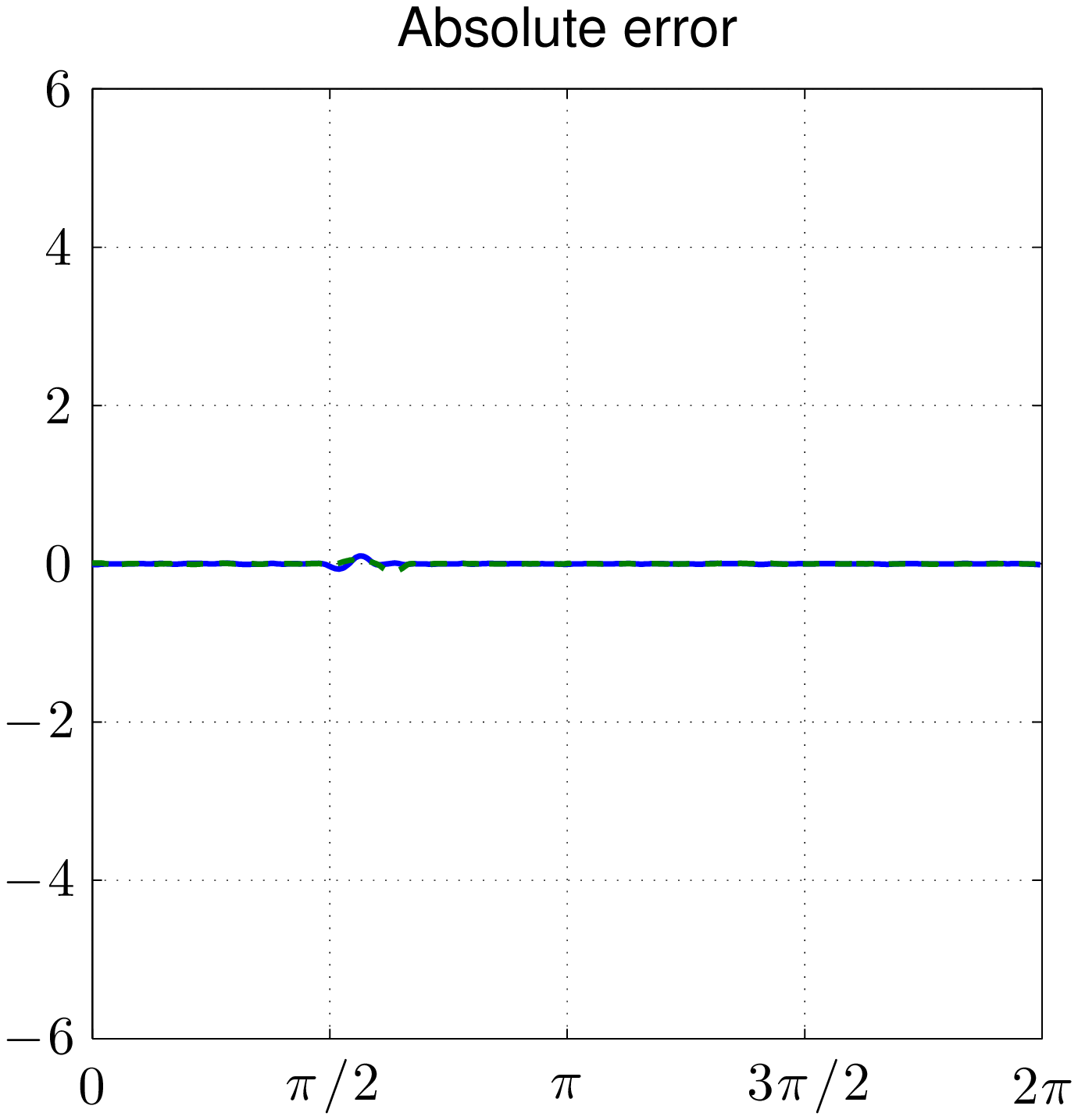}
    \caption{\small Reconstruction of the basis pursuit scheme: 
      Observed far field $\gamma$ (left), reconstruction of the missing
      part $\alpha|_\Omega$ (middle), and difference between exact far
      field and reconstructed far field (right).} 
    \label{fig:NumExa3}
  \end{figure}
  Figure~\ref{fig:NumExa3} shows a plot of the observed data $\gamma$
  (left), of the reconstruction of the missing data segment obtained by
  the fast iterative soft shrinkage algorithm (with $\mu = 10^{-3}$)
  after $10^3$ iterations (the initial guess is zero) and of the
  difference between the exact far field and the reconstructed far
  field. 

  The behavior of both algorithms in the presence of noise in the data
  depends crucially on the geometrical setup of the problem (i.e.\ on
  its conditioning). 
  The smaller the missing data segment is and the smaller the
  dimensions of the individual source components are relative to their
  distances, the more noise these algorithms can handle. 
  \hfill$\lozenge$
\end{example}

\section*{Conclusions}
We have considered the source problem for the two-dimensional
Helmholtz equation when the source is a superposition of finitely many
well-separated compactly supported source components.
We have presented stability estimates for numerical algorithms to
split the far field radiated by this source into the far fields
corresponding to the individual source components and to restore
missing data segments.
Analytic and numerical examples confirm the sharpness of these
estimates and illustrate the potential and limitations of the
numerical schemes. 

The most significant observations are:
\begin{itemize}
\item The conditioning of far field splitting and data completion
  depends on the dimensions of the source components, their relative
  distances with respect to wavelength and the size of the missing
  data segment.
  The results clearly suggest combining data completion with
  splitting whenever possible in order to improve the conditioning of
  the data completion problem.
\item The conditioning of far field splitting and data completion
  depends on wave length and deteriorates with increasing wave number.
  Therefore, in order to increase resolution one not only has to
  increase the wave number but also the dynamic range of the sensors
  used to measure the far field data.
\end{itemize}

\begin{appendix}
  \section*{Appendix}
  \section{Some properties of the squared singular values $\{s_n^2(R)\}$ 
    of $\Fcal_{B_R(0)}$}
  \label{app:sn2}
  In the following we collect some interesting properties of the squared
  singular values $\{s_n^2(R)\}$, as introduced in \eqref{eq:Defsn}, of
  the restricted Fourier transform $\Fcal_{B_R(0)}$ from
  \eqref{eq:DefRestFarfieldOperator}. 

  We first note that \cite[10.22.5]{OLBC10} implies that the squared
  singular values from \eqref{eq:Defsn} satisfy
  \begin{equation*}
    s_n^2(R) 
    \,=\, \pi R^2 \bigl(J_n^2(R)-J_{n-1}(R)J_{n+1}(R)\bigr) \,,
    \qquad n\in\Z \,,
  \end{equation*}
  and simple manipulations using recurrence relations for Bessel
  functions
  \begin{subequations}
    \label{eq:RecurrenceBessel}
    \begin{align}
      \frac{nJ_n(r)}r &\,=\, \frac{J_{n+1}(r) + J_{n-1}(r)}2 \,,
      &&n\in\Z \,,\\
      J_n'(r) &\,=\, \frac{J_{n-1}-J_{n+1}(r)}2 \,, 
      &&n\in\Z \,,
    \end{align}
  \end{subequations}
  (cf., e.g., \cite[10.6(i)]{OLBC10}) show that, for $n\in\Z$
  \begin{equation}
    \label{eq:sn2Explicit2}
    \begin{split}
      s_n^2(R) 
      &\,=\, \pi \bigl( n^2J_n^2(R) - R^2J_{n-1}(R)J_{n+1}(R) 
      + R^2J_n(R) - n^2J_n^2(R)\bigr)\\
      &\,=\, \pi \Bigl( \frac{R^2}{4}(J_{n-1}(R)+J_{n+1}(R))^2 
      - R^2J_{n-1}(R)J_{n+1}(R) + (R^2-n^2)J_n^2(R)\Bigr)\\
      &\,=\, \pi \Bigl( R^2 \Bigl(\frac{J_{n-1}(R)-J_{n+1}(R)}{2}\Bigr)^2
      + (R^2-n^2)J_n^2(R)\Bigr)\\
      &\,=\, \pi \bigl( (R J_n'(R))^2 + (R^2-n^2)J_n^2(R) \bigr) \,.
    \end{split}
  \end{equation}

  \begin{lemma}
    \label{lmm:Sumsn2}
    \begin{equation*}
      \sum_{n=-\infty}^\infty s_n^2(R)
      \,=\, \pi  R^2 \,.
    \end{equation*}
  \end{lemma}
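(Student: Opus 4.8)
The plan is to work directly from the definition (\ref{eq:Defsn}), which expresses each squared singular value as $s_n^2(R) = 2\pi\int_0^R J_n^2(r)\,r\dr$. Summing over $n\in\Z$, I would interchange the summation with the integration. Since every integrand $J_n^2(r)\,r$ is nonnegative on $[0,R]$, this interchange is justified by Tonelli's theorem (equivalently, by monotone convergence applied to the partial sums), so that
\[
  \sum_{n=-\infty}^\infty s_n^2(R)
  \,=\, 2\pi\int_0^R\Bigl(\sum_{n=-\infty}^\infty J_n^2(r)\Bigr)r\dr \,.
\]

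The key ingredient is then the classical Bessel identity $\sum_{n=-\infty}^\infty J_n^2(r) = 1$. I would derive this from the Jacobi--Anger expansion $e^{\rmi r\sin\theta} = \sum_{n=-\infty}^\infty J_n(r)e^{\rmi n\theta}$: applying Parseval's identity on $S^1$ to the left-hand side, whose modulus is identically one, gives $\sum_n J_n^2(r) = \frac{1}{2\pi}\int_0^{2\pi}|e^{\rmi r\sin\theta}|^2\dtheta = 1$. Substituting this into the display above collapses the inner sum to the constant $1$, and the remaining elementary integral yields $2\pi\int_0^R r\dr = \pi R^2$, as claimed.

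There is essentially no serious obstacle here; the only points requiring care are the two routine justifications just mentioned---the term-by-term integration (handled by nonnegativity) and the Bessel identity (handled by Parseval). An alternative route would start instead from the closed form (\ref{eq:sn2Explicit2}), namely $s_n^2(R) = \pi\bigl((RJ_n'(R))^2 + (R^2-n^2)J_n^2(R)\bigr)$, and sum term by term using the three companion identities $\sum_n J_n^2(R)=1$, $\sum_n J_n'(R)^2 = \half$, and $\sum_n n^2J_n^2(R) = R^2/2$ (all obtainable from the Jacobi--Anger expansion by differentiating in $r$ or in $\theta$ before applying Parseval). This would give $\pi R^2\bigl(\half + 1 - \half\bigr) = \pi R^2$, but it is more laborious than the direct computation, so I would present the direct one.
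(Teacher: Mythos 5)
Your proof is correct and follows essentially the same route as the paper: both sum the defining integrals $s_n^2(R) = 2\pi\int_0^R J_n^2(r)\,r\dr$ over $n$, invoke the identity $\sum_{n=-\infty}^\infty J_n^2(r)=1$ (which the paper simply cites from the NIST Handbook rather than deriving via Jacobi--Anger and Parseval), and evaluate $2\pi\int_0^R r\dr = \pi R^2$. Your added justification of the sum--integral interchange by nonnegativity is a fine, if routine, refinement that the paper leaves implicit.
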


  \begin{proof}
    Since
    \begin{equation*}
      \sum_{n=-\infty}^\infty J_n^2(r) \,=\, 1
    \end{equation*}
    (cf.\ \cite[10.23.3]{OLBC10}), the definition \eqref{eq:Defsn}
    yields
    \begin{equation*}
      \sum_{n=-\infty}^\infty s_n^2(R)
      \,=\, 2\pi\int_0^R \Bigl(\sum_{n=-\infty}^\infty J_n^2(r)\Bigr)r\dr
      \,=\, 2\pi\int_0^R r\dr
      \,=\, \pi R^2 \,.
    \end{equation*}
  \end{proof}

  The next lemma shows that odd and even squared singular values,
  $s_n^2(R)$, are monotonically decreasing for $n\geq0$ and monotonically
  increasing for $n\leq0$. 
  \begin{lemma}
    \label{lmm:Monotonicitysn2}
    \begin{equation*}
      s_{n-1}^2(R) - s_{n+1}^2(R) \,\geq\, 0 \,, \qquad n\geq 0 \,.
    \end{equation*}
  \end{lemma}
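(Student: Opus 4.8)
The plan is to work directly from the integral definition \eqref{eq:Defsn} and collapse the difference into a single manifestly nonnegative term by exploiting the recurrence relations \eqref{eq:RecurrenceBessel}. Writing the difference as
$$s_{n-1}^2(R) - s_{n+1}^2(R) = 2\pi \int_0^R \bigl(J_{n-1}^2(r) - J_{n+1}^2(r)\bigr) r \dr,$$
I would first factor the integrand as a difference of squares, $(J_{n-1}(r)-J_{n+1}(r))(J_{n-1}(r)+J_{n+1}(r))$, so that the two recurrence relations can be applied to each factor separately.

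Next, substituting the first identity in \eqref{eq:RecurrenceBessel} for the sum, $J_{n-1}(r)+J_{n+1}(r) = \tfrac{2n}{r}J_n(r)$, and the second for the difference, $J_{n-1}(r)-J_{n+1}(r) = 2J_n'(r)$, the integrand becomes $\tfrac{4n}{r}J_n(r)J_n'(r)$. The crucial observation is that after multiplication by the weight $r$ this is an exact derivative:
$$\bigl(J_{n-1}^2(r)-J_{n+1}^2(r)\bigr)r = 4n\,J_n(r)J_n'(r) = 2n\,\frac{d}{dr}\bigl(J_n^2(r)\bigr).$$
Integrating from $0$ to $R$ then telescopes the whole expression, leaving only boundary terms.

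Finally, I would evaluate the boundary contribution. For $n \geq 1$ one has $J_n(0)=0$, so the lower endpoint vanishes and
$$s_{n-1}^2(R) - s_{n+1}^2(R) = 4\pi n\, J_n^2(R) \geq 0,$$
while for $n=0$ the prefactor $2n$ annihilates the term outright (consistent with $s_{-1}^2(R)=s_1^2(R)$ coming from the symmetry $s_{-n}^2=s_n^2$). This establishes the claimed monotonicity for all $n\geq 0$.

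I do not expect any genuine obstacle here: the only point requiring care is verifying that the boundary term at the origin drops out, which is immediate from $J_n(0)=0$ for $n\geq 1$ and from the vanishing prefactor when $n=0$. An alternative route would be to subtract two instances of the explicit formula \eqref{eq:sn2Explicit2} and simplify, but the telescoping integral argument above is both shorter and makes the nonnegativity transparent.
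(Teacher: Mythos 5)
Your proof is correct and follows essentially the same route as the paper: both use the recurrence relations \eqref{eq:RecurrenceBessel} to rewrite $J_{n-1}^2(r)-J_{n+1}^2(r)$ as $\frac{2n}{r}(J_n^2)'(r)$ and then integrate the exact derivative to obtain $s_{n-1}^2(R)-s_{n+1}^2(R)=4\pi n\,J_n^2(R)\geq 0$. Your explicit treatment of the boundary term at $r=0$ (via $J_n(0)=0$ for $n\geq 1$ and the vanishing prefactor for $n=0$) is a small point the paper leaves implicit, but the argument is the same.
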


  \begin{proof}
    Using the recurrence relations \eqref{eq:RecurrenceBessel} we find
    that 
    \begin{equation*}
      J_{n-1}^2(r) - J_{n+1}^2(r) 
      \,=\, \frac{4n}r J_n(r) J_n'(r)
      \,=\, \frac{2n}r (J_n^2)'(r) \,.
    \end{equation*}
    Thus,
    \begin{equation*}
      \begin{split}
        s_{n-1}^2(R) - s_{n+1}^2(R) 
        &\,=\, 2\pi \Bigl( \int_0^R J_{n-1}^2(r) r \dr 
        - \int_0^R J_{n+1}^2(r) r \dr \Bigr)\\
        &\,=\, 2\pi \int_0^R 2n (J_n^2)'(r) \dr 
        \,=\, 4\pi n J_n^2(R) 
        \,\geq\, 0 \,.
      \end{split}
    \end{equation*}
  \end{proof}

  Integrating sharp estimates for $J_n(r)$ from \cite{Kra14}, we obtain
  upper bounds for $s_n^2(R)$ when $|n|\geq R>0$.
  Since $s_n^2(R) = s_{-n}^2(R)$, $n\in\Z$, it is sufficient to consider
  $n\geq R$.
  \begin{lemma}
    \label{lmm:Decaysn2}
    Suppose that $n \geq R > 0$.
    Then
    \begin{equation*}
      s_n^2(R) 
      \,\leq\, \frac{\pi
        2^{\frac23}n^{\frac23}}{3^{\frac43}\bigl(\Gamma(\frac23)\bigr)^2}
      \Bigl( \frac{n+\frac12}{n} \Bigr)^{n+1}
      \Bigl( \frac{R^2}{n^2} e^{1-\frac{R^2}{n^2}} \Bigr)^n 
      \frac{R^2}{n^2} \,.
    \end{equation*}
  \end{lemma}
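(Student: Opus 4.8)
The starting point is the integral representation of the squared singular values, $s_n^2(R) = 2\pi\int_0^R J_n^2(r)\,r\,\dr$, which is just the defining formula \eqref{eq:Defsn}. Since the hypothesis $n\ge R$ forces $0<r\le R\le n$ throughout the range of integration, the whole integral lives at or below the turning point $r=n$ of $J_n$, which is exactly the regime covered by the sharp pointwise bounds of \cite{Kra14}. The plan is therefore: (i) insert Krasikov's bound for $J_n^2(r)$ valid on $0<r\le n$, (ii) reduce the resulting integral to an incomplete–gamma integral by a change of variable, and (iii) estimate that integral by a single endpoint evaluation using monotonicity, after which the claimed prefactor emerges by collecting the powers of $n$.

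For step (i), I would invoke the estimate of \cite{Kra14} written so that $J_n^2(r)$ is bounded by a constant carrying the turning-point factors $2^{2/3}n^{2/3}/\bigl(3^{4/3}(\Gamma(2/3))^2\bigr)$ together with the correction $\bigl((n+\tfrac12)/n\bigr)^{n+1}$, multiplied by the non-oscillatory envelope $\bigl(\frac{r^2}{n^2}e^{1-r^2/n^2}\bigr)^n$ (up to slowly varying factors) that governs the decay in $r$ below the turning point. The appearance of $\Gamma(2/3)$ and the power $n^{2/3}$ is the fingerprint of the Airy behaviour of $J_n$ near $r=n$; this is precisely why a naive WKB/stationary-phase envelope will not do and the uniform bound of \cite{Kra14} is required.

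For steps (ii)–(iii), the heart of the estimate is the envelope integral. Substituting $u=r^2/n^2$, so that $r\,\dr=\tfrac{n^2}{2}\,\du$ and the upper limit becomes $T:=R^2/n^2\le1$, gives
\[
  \int_0^R \Bigl(\tfrac{r^2}{n^2}e^{1-r^2/n^2}\Bigr)^n r\,\dr
  \,=\, \frac{n^2 e^n}{2}\int_0^{T} u^n e^{-nu}\,\du .
\]
The integrand $u^n e^{-nu}$ has derivative $n u^{n-1}e^{-nu}(1-u)\ge0$ on $[0,1]$, hence is increasing there; since $T\le1$ this yields $\int_0^{T} u^n e^{-nu}\,\du \le T\cdot T^n e^{-nT}=T^{n+1}e^{-nT}$, so the envelope integral is at most $\tfrac{n^2}{2}T^{n+1}e^{n(1-T)}=\tfrac{R^2}{2}\bigl(\frac{R^2}{n^2}e^{1-R^2/n^2}\bigr)^n$. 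Combining this with the $n$-dependent prefactor of \cite{Kra14} and simplifying the powers of $n$ then reproduces the right-hand side of the lemma.

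The main obstacle is not the integration, which is routine once the envelope is isolated, but the faithful import and application of Krasikov's bound uniformly up to the turning point together with the exact matching of its explicit constants (the $\Gamma(2/3)$, the power of $n$, and the $\bigl((n+\tfrac12)/n\bigr)^{n+1}$ correction), so that the bookkeeping after integration reproduces the stated prefactor and not merely its order of magnitude. As an independent check of the algebra I would also compare against the closed form $s_n^2(R)=\pi\bigl((RJ_n'(R))^2+(R^2-n^2)J_n^2(R)\bigr)$ from \eqref{eq:sn2Explicit2}: for $R\le n$ the second summand is nonpositive, whence $s_n^2(R)\le\pi R^2\bigl(J_n'(R)\bigr)^2$, giving an alternative route in which one bounds $|J_n'(R)|$ at the single point $r=R$ and verifies consistency with the integrated estimate.
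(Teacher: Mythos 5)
Your overall route is the same as the paper's: insert Krasikov's pointwise bound for $J_n^2(r)$ on $0<r\le n$ into \eqref{eq:Defsn}, change variables to obtain an incomplete-gamma integral, and bound that integral by length times endpoint value using the monotonicity of $t^ne^{-t}$ up to $t=n$. Your substitution $u=r^2/n^2$ and the endpoint estimate are correct. The gap is in the input, and in the unproved claim that the bookkeeping ``reproduces the stated prefactor''. What theorem~2 of \cite{Kra14} actually provides (and what the paper quotes) is
\[
J_n^2(r)\,\le\, C_K\,\frac{r^{2n}}{n^{2n+\frac23}}\,e^{\frac{n^2-r^2}{n+\frac12}},
\qquad 0<r\le n, \qquad
C_K=\frac{2^{\frac23}}{3^{\frac43}\bigl(\Gamma(\tfrac23)\bigr)^2}\,,
\]
whose value at $r=n$ is $C_K n^{-2/3}$; this is asymptotically sharp, since $J_n^2(n)\sim C_K n^{-2/3}$. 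The bound you describe --- with the factor $n^{2/3}$ and the correction $\bigl((n+\tfrac12)/n\bigr)^{n+1}$ already present pointwise, multiplying the envelope $\bigl(\tfrac{r^2}{n^2}e^{1-r^2/n^2}\bigr)^n$ --- is not Krasikov's. For your final bookkeeping to produce the lemma's prefactor, the pointwise bound would have to read $J_n^2(r)\le C_K\bigl((n+\tfrac12)/n\bigr)^{n+1}n^{-4/3}\bigl(\tfrac{r^2}{n^2}e^{1-r^2/n^2}\bigr)^n$, which at $r=n$ asserts $J_n^2(n)\lesssim n^{-4/3}$ and is therefore false. If you run your own computation with the correct input (bounding $e^{(n^2-r^2)/(n+\frac12)}\le e^{n-r^2/n}$), you arrive at the lemma with $n^{4/3}$ in place of $n^{2/3}$.

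You should also know that this defect is not only yours: the paper's proof ends with the asserted identity
\[
\pi C_K\,\frac{(n+\tfrac12)^{n+1}}{n^{2n+\frac23}}\,e^{n}\,\frac{R^2}{n}\,\frac{R^{2n}}{n^n}\,e^{-\frac{R^2}{n}}
\,=\,\pi C_K\,n^{\frac23}\Bigl(\frac{n+\tfrac12}{n}\Bigr)^{n+1}
\Bigl(\frac{R^2}{n^2}\,e^{1-\frac{R^2}{n^2}}\Bigr)^n\frac{R^2}{n^2}\,,
\]
but collecting powers of $n$ shows the left-hand side equals $n^{2/3}$ times the right-hand side, so what is actually established there is also the $n^{4/3}$ version. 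Moreover, near $R=n$ no pointwise-bound-plus-integration argument based on this envelope can reach $n^{2/3}$: Krasikov's bound stays of size $n^{-2/3}$ over a window of width $\sim\sqrt n$ below the turning point (whereas $J_n^2$ decays on the Airy scale $n^{1/3}$), so even exact evaluation of the incomplete gamma yields order $n^{5/6}$ at $R=n$, larger than the claimed $n^{2/3}$. Your closing observation is in fact the more promising route: by \eqref{eq:sn2Explicit2}, $s_n^2(R)\le\pi R^2\bigl(J_n'(R)\bigr)^2$ for $R\le n$, and $\pi n^2 J_n'(n)^2$ is genuinely of order $n^{2/3}$, so a sharp pointwise bound on $J_n'$ below the turning point could deliver the stated prefactor; but you leave that as a consistency check rather than carrying it out, so as written the proposal does not prove the lemma as stated.
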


  \begin{proof}
    From theorem~2 of \cite{Kra14} we obtain for $0<r\leq n$ that
    \begin{equation*}
      J_{n}^2(r)
      \,\le\, \frac{2^{\frac23}}{3^{\frac43}\bigl(\Gamma(\frac23)\bigr)^2}
      \frac{r^{2n}}{n^{2n+\frac23}}
      e^{\frac{n^{2}-r^{2}}{n+\frac12}} \,.
    \end{equation*}
    Substituting this into \eqref{eq:Defsn} yields
    \begin{equation*}
      \begin{split}
        s_n^2(R)
        &\,\leq\, 2\pi 
        \frac{2^{\frac23}}{3^{\frac43}\bigl(\Gamma(\frac23)\bigr)^2}
        \frac{e^{\frac{n^2}{n+\frac12}}}{n^{2n+\frac23}}
        \int_0^R r^{2n} e^{-\frac{r^2}{n+\frac12}} r\dr\\
        &\,=\, \pi 
        \frac{2^{\frac23}}{3^{\frac43}\bigl(\Gamma(\frac23)\bigr)^2}
        \frac{e^{\frac{n^2}{n+\frac12}}}{n^{2n+\frac23}} \Bigl(n+\frac12\Bigr)^{n+1}
        \int_0^{\frac{R^2}{n+\frac12}} t^{n} e^{-t} \dt\\
        &\,\leq\, \pi 
        \frac{2^{\frac23}}{3^{\frac43}\bigl(\Gamma(\frac23)\bigr)^2}
        \frac{e^{n}}{n^{2n+\frac23}} \Bigl(n+\frac12\Bigr)^{n+1}
        \int_0^{\frac{R^2}{n}} t^{n} e^{-t} \dt \,.
      \end{split}
    \end{equation*}
    Since $t^{n} e^{-t}$ is monotonically increasing for
    $0<t<\frac{R^2}{n}\leq n$, we see
    \begin{equation*}
      \begin{split}
        s_n^2(R)
        &\,=\, \pi 
        \frac{2^{\frac23}}{3^{\frac43}\bigl(\Gamma(\frac23)\bigr)^2}
        \frac{(n+\frac12)^{n+1}}{n^{2n+\frac23}} e^{n}
        \frac{R^2}{n} \frac{R^{2n}}{n^n} e^{-\frac{R^2}{n}}\\
        &\,=\, \pi
        \frac{2^{\frac23}}{3^{\frac43}\bigl(\Gamma(\frac23)\bigr)^2}
        n^{\frac23} \Bigl( \frac{n+\frac12}{n} \Bigr)^{n+1}
        \Bigl( \frac{R^2}{n^2} e^{1-\frac{R^2}{n^2}} \Bigr)^n 
        \frac{R^2}{n^2} \,.
      \end{split}
    \end{equation*}
  \end{proof}

  On the other hand, the squared singular values $s_{n}^{2}(R)$ are not
  small for $|n|<R$. 

  \begin{theorem}
    \label{th:BesselAsy}
    Suppose that $R>n\geq 0$, define $\alpha\in(0,\frac{\pi}{2})$ by 
    $\cos\alpha=\frac{n}{R}$, and therefore 
    $\sin\alpha=\sqrt{1-(\frac{n}{R})^{2}}$, and assume
    $\alpha>\varepsilon>0$. 
    Then
    \begin{align}
      \label{eq:EstBessel1}
      \biggl| J_n(R)
      -\sqrt{\frac{2}{\pi R\sin\alpha}}
      \cos\Bigl(R(\sin\alpha-\alpha\cos\alpha)
      -\frac{\pi}{4}\Bigr)\biggr|
      &\,\le\, \frac{C(\varepsilon)}{R} \,,\\\label{eq:EstBessel2}
      \biggl|J_n'(R)
      +\sqrt{\frac{2}{\pi R\sin\alpha}} \sin\alpha
      \sin\Bigl(R(\sin\alpha-\alpha\cos\alpha)
      -\frac{\pi}{4}\Bigr)\biggr|
      &\,\le\, \frac{C(\varepsilon)}{R} \,.
    \end{align}
    where the constant $C(\eps)$ depends on the lower bound $\eps$ but
    is otherwise independent of $n$ and $R$.
  \end{theorem}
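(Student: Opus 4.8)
The plan is to derive both estimates simultaneously by applying the method of stationary phase to Bessel's integral. Since $n\in\Z$, I would begin from the representation
\begin{equation*}
  J_n(R) \,=\, \frac1\pi\int_0^\pi \cos(R\sin\theta - n\theta)\dtheta
  \,=\, \frac1\pi\,\real\int_0^\pi e^{\rmi R\psi(\theta)}\dtheta \,,
\end{equation*}
where I write $n = R\cos\alpha$ and set $\psi(\theta) := \sin\theta - \theta\cos\alpha$, so that the large parameter is $R$ and the phase is $R\psi$. Differentiating under the integral sign in $R$ gives the companion representation
\begin{equation*}
  J_n'(R) \,=\, -\frac1\pi\int_0^\pi \sin\theta\,\sin(R\sin\theta - n\theta)\dtheta \,,
\end{equation*}
to which the identical analysis applies, now with the smooth amplitude $\sin\theta$ and with $\sin$ in place of $\cos$. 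The phase has exactly one stationary point in $(0,\pi)$, namely $\theta = \alpha$, because $\psi'(\theta) = \cos\theta - \cos\alpha$; there $\psi(\alpha) = \sin\alpha - \alpha\cos\alpha$ and $\psi''(\alpha) = -\sin\alpha < 0$, which already reproduces the phase $R(\sin\alpha - \alpha\cos\alpha)$ occurring in the statement.

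I would then split each integral into a fixed neighbourhood $|\theta - \alpha| < \eps/2$ of the stationary point and its complement. On the complement, $|\psi'(\theta)| = |\cos\theta - \cos\alpha|$ is bounded below by a constant depending only on $\eps$, so integration by parts bounds that contribution by $C(\eps)/R$; the boundary term at $\theta = 0$ is harmless since $\psi'(0) = 1 - \cos\alpha \ge 1 - \cos\eps > 0$ (and it vanishes identically in the $J_n'$ integral owing to the factor $\sin\theta$). On the neighbourhood I would insert the quadratic model $R\psi(\theta) = R\psi(\alpha) - \tfrac12 R\sin\alpha\,(\theta-\alpha)^2 + O\!\left(R(\theta-\alpha)^3\right)$ and evaluate the Fresnel integral $\int_{\R} e^{-\rmi\frac12 R\sin\alpha\,s^2}\ds = \sqrt{2\pi/(R\sin\alpha)}\,e^{-\rmi\pi/4}$, which yields the amplitude $\sqrt{2/(\pi R\sin\alpha)}$ and the phase shift $-\pi/4$. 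Taking real parts assembles the cosine in \eqref{eq:EstBessel1}; taking imaginary parts (and using $\sin\theta|_{\theta=\alpha} = \sin\alpha$, together with the overall minus sign) assembles the sine and the extra factor $\sin\alpha$ in \eqref{eq:EstBessel2}. The cubic remainder in the Taylor expansion of $\psi$ contributes at order $R^{-3/2}$, while the Fresnel tails and the non-stationary pieces contribute at order $R^{-1}$; all of these are within the asserted $O(R^{-1})$ error since $R\ge1$.

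The hard part is the bookkeeping needed to make the constant depend on $\eps$ \emph{alone}, uniformly in $R$ and in $\alpha\in(\eps,\tfrac\pi2]$. Every factor that could degenerate must be controlled by $\eps$: the amplitude $1/\sin\alpha$ and the curvature $|\psi''(\alpha)| = \sin\alpha$ are handled using $\sin\alpha \ge \sin\eps$; the lower bound for $|\psi'|$ away from the stationary point and the value $\psi'(0) = 1-\cos\alpha$ are handled using $1-\cos\eps$; and $\psi'''$, which governs the cubic remainder, is bounded by $1$. The genuinely delicate endpoint is $\theta = 0$, toward which the stationary point would migrate as $\alpha\to0$ (that is, as $n/R\to1$); the hypothesis $\alpha > \eps$ is exactly what keeps $\theta=\alpha$ a definite distance from the boundary and $\psi'(0)$ bounded away from zero, so that all the non-stationary estimates survive with a constant of the form $C(\eps)$ and the quadratic model near $\alpha$ remains valid on a neighbourhood of $\eps$-independent width.
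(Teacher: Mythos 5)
Your proposal is correct in substance and follows essentially the same route as the paper: a stationary-phase analysis of Bessel's integral, split into a neighbourhood of the critical point of the phase $\phi(t)=\sin t-t\cos\alpha$ and its complement, with the complement controlled by integration by parts and every constant traced back to the lower bounds $\sin\alpha\ge\sin\eps$ and $1-\cos\alpha\ge 1-\cos\eps$. The differences are organizational: the paper works with the complex representation on $(-\pi,\pi)$, where the two stationary points $\pm\alpha$ together produce the factor $2\cos\bigl(R(\sin\alpha-\alpha\cos\alpha)-\frac{\pi}{4}\bigr)$, and it uses a smooth cutoff $A_\eps(t)=a_\eps(\phi'(t))$ so that both regions can be handed to H\"ormander's Theorems 7.7.1 and 7.7.5; your one-stationary-point version on $(0,\pi)$ with a sharp cutoff is an equivalent reorganization, and your uniformity bookkeeping matches the paper's (their lemma choosing the cutoff scale $\eps=\frac{\sin^{2}\alpha}{4}$ plays the role of your lower bounds).

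The one step you should not present as routine is the stationary region. On a window of fixed width, ``inserting the quadratic model'' cannot be justified by the pointwise bound $|e^{\rmi R\psi}-e^{\rmi R(\psi(\alpha)-\frac{1}{2}\sin\alpha(\theta-\alpha)^{2})}|\le R|r(\theta)|$ with $r=O(|\theta-\alpha|^{3})$: that error integrates to $O(R\eps^{4})$, which grows with $R$; and shrinking the window with $R$ makes the Fresnel tails and the boundary terms of the integration by parts too large to reach the asserted $O(R^{-1})$. Your statement that the cubic remainder contributes $O(R^{-3/2})$ is true, but it is precisely the content of the stationary-phase lemma with remainder (oscillatory cancellation of the odd cubic term plus control of higher moments, or a Morse-lemma change of variables followed by a Plancherel-type argument), not a consequence of Taylor expansion plus the explicit Fresnel integral. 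So at this point you should either supply such an argument or cite the standard result --- H\"ormander's Theorem 7.7.5, which is exactly what the paper does; note that H\"ormander's theorems require a smooth amplitude, which is why the paper's cutoff is smooth rather than sharp. With that step properly justified or referenced, your proof is complete.
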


  \begin{proof}
    By definition,
    \begin{equation}
      \label{eq:BesselIntDef}
      J_{n}(R)
      \,=\, \frac{1}{2\pi} 
      \int_{-\pi}^{\pi}e^{\rmi(R\sin{t}-n{t})}\dt
      \,=\, \frac{1}{2\pi}
      \int_{-\pi}^{\pi}e^{\rmi R(\sin{t}-{t}\cos\alpha)}\dt
      \,=\, \frac{1}{2\pi} 
      \int_{-\pi}^{\pi}e^{\rmi R\phi({t})}\dt
    \end{equation}
    with
    \begin{equation*}
      \phi({t}) \,=\, \sin{t}-{t}\cos\alpha \,, \quad
      \phi'({t}) \,=\, \cos{t}-\cos\alpha \,, \quad
      \phi''({t}) \,=\, -\sin{t} \,.
    \end{equation*}

    The phase function $\phi$ has stationary points at $\pm\alpha$, and
    $\phi''$ vanishes at $0$ and $\pi$. 
    We will apply stationary phase in a neighborhood of each stationary
    point. 
    The neighborhood must be small enough to guarantee that 
    $|\phi''({t})|$ is bounded from below there.
    Integration by parts will be used to estimate integral in regions
    where $\phi'$ is bounded below. 
    The hypothesis that  $\alpha>\varepsilon>0$ will guarantee
    that the union of these two regions covers the whole interval
    $(-\pi,\pi)$. 

    To separate the two regions, let 
    $a_{\eps}(\tau)=a\bigl(\frac{\tau}{\eps}\bigr)$, $\tau \in \R$, be a
    $C^{\infty}$ cutoff function satisfying 
    \begin{equation*}
      a_{\eps}(\tau) 
      \,=\,
      \begin{cases}
        1 & \text{if } |\tau|>2\eps\\
        0 & \text{if } |\tau|<\eps
      \end{cases} 
      \qquad \text{and} \qquad
      |a_{\eps}^{(j)}(\tau)|
      \,\le\,  \frac{C_{j}}{\eps^{j}}
    \end{equation*}
    with the $C_{j}>0$ independent of $\eps>0$. 
    Define $A_{\eps}({t}) := a_{\eps}(\phi'({t})) $, 
    ${t}\in(-\pi,\pi)$, then
    \begin{equation}
      \label{eq:StatPhase0}
      A_{\eps}({t}) 
      \,=\, \begin{cases}
        1 & \text{if } |\phi'({t})| > 2\eps\,,\\
        0 & \text{if } |\phi'({t})| < \eps\,. 
      \end{cases}
    \end{equation}

    Theorem~7.7.1 of \cite{Hor03} tells us that for any integer 
    $k\geq 0$
    \begin{equation}
      \label{eq:StatPhase1}
      \biggl|\int_{-\pi}^{\pi} e^{\rmi R\phi({t})}
      A_{\eps}({t}) \dt \biggr|
      \,\le\,
      \frac{C}{R^{k}}
      \sum_{j=0}^{k} \sup_{{t}\in\supp A_{\eps}}
      \biggl|\frac{A_{\eps}^{(j)}({t})}{(\phi'({t}))^{2k-j}}\biggr|
      \,\le\,
      \frac{C}{R^{k}\eps^{2k}} \sum_{j=0}^{k}C_{j} 
    \end{equation}
    with $C$ only depending on an upper bound for the higher order
    derivatives of $\phi$.
    For the second inequality we have used \eqref{eq:StatPhase0} and the
    fact that all higher derivatives of $\phi$ are bounded by 1.

    We will estimate the remainder of the integral using Theorem~7.7.5
    of \cite{Hor03}, which tells us that, if  $t_0$ is the unique
    stationary point of  $\phi$ in the support of a smooth function $B$,
    and $|\phi''({t})|>\delta>0$ on the support of  $B$, then  
    \begin{equation}
      \label{eq:StatPhase2}
      \biggl|\int_{-\pi}^\pi e^{\rmi R\phi({t})} B({t}) \dt
      - \sqrt{\frac{2\pi \rmi}{R\phi''(t_0)}} 
      e^{\rmi R\phi(t_0)} B(t_0)\biggr|
      \,\le\,
      \frac{C}{R} \sum_{j=0}^{2}
      \sup_{{t}\in\supp B}|B^{(j)}({t})|
    \end{equation}
    with $C>0$ depending only on the  lower bound $\delta$ for
    $|\phi''({t})|$ and an upper bound for higher derivatives of
    $\phi$ on the support of $B$. 
    We will set  $B=1-A_{\eps}({t})$, which is supported in  two
    intervals, one containing  $\alpha$ and the other containing
    $-\alpha$, so \eqref{eq:StatPhase2}, becomes
    \begin{equation}
      \label{eq:StatPhase3}
      \biggl|\int_{-\pi}^{\pi} e^{\rmi R\phi({t})}
      (1-A_{\eps}({t})) \dt
      - \sqrt{\frac{2\pi}{R\sin\alpha}}
      2 \cos\Bigr(R(\sin\alpha-\alpha\cos\alpha)
      -\frac{\pi}{4}\Bigr)\biggr|
      \,\le\,
      \frac{C}{R}\Sum_{j=0}^{2}\frac{C_{j}}{\eps^{j}}
    \end{equation}
    as long as $\eps$ is chosen so that
    $|\phi''({t})|=|\sin({t})|\ge\delta$ on the support of
    $1-A_{\eps}({t})$.

    The following lemma suggests a proper choice of $\eps$.
    \renewcommand{\qedsymbol}{}
  \end{proof}

  \begin{lemma}
    \label{lmm:StatPhase}
    Let ${t}$ and $\alpha$ belong to $\bigl[0,\frac{\pi}{2}\bigr]$ 
    then  
    \begin{equation*}
      |\cos{t}-\cos\alpha| 
      \,<\, \eps
    \end{equation*}
    implies that
    \begin{equation*}
      \sin{t}
      \,>\, \sin\alpha - \frac{2\eps}{\sin\alpha} \,.
    \end{equation*}
  \end{lemma}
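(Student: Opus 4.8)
The plan is to work entirely with the Pythagorean identity $\sin^2+\cos^2=1$, which converts the hypothesis on cosines into information about the sines. First I would record the elementary identity
\begin{equation*}
  \sin^2\alpha - \sin^2 t \,=\, \cos^2 t - \cos^2\alpha \,=\, (\cos t-\cos\alpha)(\cos t+\cos\alpha) \,,
\end{equation*}
valid because both outer expressions equal the difference of the two squared cosines. This is the bridge between the hypothesis $|\cos t-\cos\alpha|<\eps$ and the desired lower bound on $\sin t$.

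Next I would dispose of the trivial case. If $\sin t\ge\sin\alpha$, then since $\eps>0$ and $\sin\alpha>0$ (the quantity $\frac{2\eps}{\sin\alpha}$ is only meaningful in this case), the conclusion $\sin t>\sin\alpha-\frac{2\eps}{\sin\alpha}$ is immediate. So I may assume $\sin t<\sin\alpha$. Because $t,\alpha\in\bigl[0,\frac{\pi}{2}\bigr]$, where $\sin$ is increasing and $\cos$ is decreasing, this forces $t<\alpha$ and hence $\cos t>\cos\alpha$, so the factor $\cos t-\cos\alpha$ above is positive and equals $|\cos t-\cos\alpha|<\eps$, while $\cos t+\cos\alpha\le 2$. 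Substituting into the identity gives the quadratic estimate $\sin^2\alpha-\sin^2 t<2\eps$.

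The main step is then to convert this quadratic bound into the advertised linear one. Factoring the left-hand side as $(\sin\alpha-\sin t)(\sin\alpha+\sin t)$ and using $\sin t\ge 0$ to bound the second factor below by $\sin\alpha+\sin t\ge\sin\alpha$, I obtain
\begin{equation*}
  (\sin\alpha-\sin t)\,\sin\alpha \,\le\, (\sin\alpha-\sin t)(\sin\alpha+\sin t) \,<\, 2\eps \,.
\end{equation*}
Dividing through by $\sin\alpha>0$ yields $\sin\alpha-\sin t<\frac{2\eps}{\sin\alpha}$, which is precisely the claim.

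The only delicate point I anticipate is this passage from the quadratic estimate $\sin^2\alpha-\sin^2 t<2\eps$ to the linear estimate: a naive square root would produce a weaker constant, whereas the trick that makes the stated factor $\frac{2\eps}{\sin\alpha}$ come out cleanly is to factor the difference of squares and exploit the nonnegativity of $\sin t$ to replace the sum factor by its lower bound $\sin\alpha$. Confining the angles to $\bigl[0,\frac{\pi}{2}\bigr]$ is exactly what supplies both this sign information and the monotonicity used to deduce $\cos t>\cos\alpha$ from $\sin t<\sin\alpha$, so I expect no further obstruction.
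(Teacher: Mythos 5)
Your proof is correct and follows essentially the same route as the paper's: the Pythagorean identity $\sin^2\alpha-\sin^2 t=\cos^2 t-\cos^2\alpha$, factoring the difference of squares, and the bounds $\cos t+\cos\alpha\le 2$ and $\sin t+\sin\alpha\ge\sin\alpha$. The only difference is presentational --- the paper works with absolute values and gets the two-sided bound $|\sin t-\sin\alpha|\le\frac{2\eps}{\sin\alpha}$ in one chain, whereas you split into cases and prove only the one-sided inequality actually claimed.
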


  \begin{proof}
    Since
    \begin{equation*}
      \sin^{2}{t}-\sin^{2}\alpha 
      \,=\, \cos^{2}\alpha - \cos^{2}{t}
    \end{equation*}
    we deduce
    \begin{equation*}
      \sin{t}-\sin\alpha 
      \,=\, \frac{\cos\alpha+\cos{t}}{\sin{t}+\sin\alpha}
      (\cos\alpha-\cos{t}) \,.
    \end{equation*}
    Consequently
    \begin{equation*}
      |\sin{t}-\sin\alpha|
      \,\le\, \frac{2}{\sin\alpha}\eps \,.
    \end{equation*}
  \end{proof}

  \begin{proof}[End of proof of theorem~\ref{th:BesselAsy}]
    We choose $\eps=\frac{\sin^{2}\alpha}{4}$ and assume that
    $|\phi'(t)| = |\cos t - \cos\alpha| < \eps$, then
    lemma~\ref{lmm:StatPhase} gives 
    $\sin{t}>\frac{\sin\alpha}{2}$. 
    We use this value of $\eps$ in \eqref{eq:StatPhase0}, i.e.
    \begin{equation*}
      |\phi'({t})| 
      \,>\, 2\eps \quad \text{on } \supp A_\eps  
      \qquad \text{and} \qquad
      |\phi'({t})| 
      \,<\, \eps \quad \text{on } \supp (1-A_\eps) \,.
    \end{equation*}
    Accordingly,
    \begin{equation*}
      |\phi''({t})| 
      \,=\, |\sin{t}| 
      \,>\, \frac{\sin\alpha}{2} 
      \,=\, \sqrt{\eps} \,=:\, \delta \qquad
      \text{on } \supp (1-A_\eps)  \,.
    \end{equation*}
    Now, adding \eqref{eq:StatPhase1} and \eqref{eq:StatPhase3}
    establishes \eqref{eq:EstBessel1}. 

    The calculation for  \eqref{eq:EstBessel2} is analogous with
    \eqref{eq:BesselIntDef} replaced by 
    \begin{equation*}
      J_{n}'(R)
      \,=\,
      \frac{1}{2\pi}\int_{-\pi}^{\pi} \rmi\sin({t}) 
      e^{\rmi(R\sin{t}-n{t})} \dt 
      \,=\,
      \frac{1}{2\pi}\int_{-\pi}^{\pi} \rmi\sin({t}) 
      e^{\rmi R\phi({t})} \dt \,,
    \end{equation*}
    which has the same phase and hence the same stationary
    points. The only difference is that the term  $B(t_0)$
    in \eqref{eq:StatPhase2} at  $t_0=\pm\alpha$ will be
    $\pm \rmi\sin\alpha$ rather than 1.
  \end{proof}

  We now combine (\ref{eq:EstBessel1}) and (\ref{eq:EstBessel2}) with
  the equality \eqref{eq:sn2Explicit2}
  to obtain, for $\cos\alpha = \frac{n}{R}<1-\eps$ 
  \begin{equation*}
    \bigl| s_{n}^{2}(R)-2R\sin\alpha\bigr|
    \,=\, \bigl|s_{n}^{2}(R)-2\sqrt{R^{2}-n^{2}}\bigr|
    \,\le\, C(\eps)\sqrt{R} \,.
  \end{equation*}
  Since equation \eqref{eq:BesselIntDef} is only a valid definition 
  of the Bessel function  $J_{n}(R)$ when $n$ is an integer\footnote{The
    definition requires a contour integral when  $\nu$ is not an
    integer.}, we denote in the following by $\lceil \nu R\rceil$ is
  smallest integer that is greater than or equal to $\nu R$, so that we
  can state a convergence result. 

  \begin{corollary}
    \begin{equation*}
      \lim_{R\rightarrow\infty} \frac{s_{\lceil\nu R\rceil}^{2}(R)}{2R}
      \,=\,
      \begin{cases}
        \sqrt{1-\nu^{2}} & \nu \le 1
        \\
        0 & \nu \ge 1
      \end{cases}
    \end{equation*}
  \end{corollary}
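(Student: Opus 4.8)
The plan is to split according to whether $\nu<1$ or $\nu\ge1$, writing $n=n(R):=\lceil\nu R\rceil$ throughout. The first thing I would record is the elementary two-sided bound $\nu R\le n<\nu R+1$, which gives $n/R\to\nu$ as $R\to\infty$; this is what lets me convert information about the ratio $n/R$ into the limit, and it also shows $n=O(R)$ in either case.

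\textbf{The case $\nu<1$.} Here I would use the estimate $|s_n^2(R)-2\sqrt{R^2-n^2}|\le C(\eps)\sqrt R$ derived from Theorem~\ref{th:BesselAsy} immediately before the statement, which is valid whenever $\cos\alpha=n/R<1-\eps$. Since $n/R\to\nu<1$, I may fix $\eps=(1-\nu)/2>0$; then $n/R<1-\eps$ (and in particular $n<R$, so the estimate indeed applies) for all sufficiently large $R$. Dividing by $2R$ gives $\bigl|\tfrac{s_n^2(R)}{2R}-\sqrt{1-(n/R)^2}\bigr|\le \tfrac{C(\eps)}{2\sqrt R}\to0$, while $\sqrt{1-(n/R)^2}\to\sqrt{1-\nu^2}$ because $n/R\to\nu$. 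Combining the two limits yields $s^2_{\lceil\nu R\rceil}(R)/(2R)\to\sqrt{1-\nu^2}$.

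\textbf{The case $\nu\ge1$.} Now $n\ge\nu R\ge R$, so Lemma~\ref{lmm:Decaysn2} is applicable. I would bound the $n$-dependent factors there crudely: $\bigl(\tfrac{n+1/2}{n}\bigr)^{n+1}\le e$ and $\tfrac{R^2}{n^2}\le1$ are elementary, and the decisive factor is controlled by the inequality $xe^{1-x}\le1$ for all $x>0$ (maximum $1$ at $x=1$), applied with $x=R^2/n^2$, giving $\bigl(\tfrac{R^2}{n^2}e^{1-R^2/n^2}\bigr)^{n}\le1$. Hence $s_n^2(R)\le C_0\,n^{2/3}$ for a constant $C_0$ (one may take $C_0=\tfrac{\pi 2^{2/3}e}{3^{4/3}(\Gamma(2/3))^2}$), and since $n\le\nu R+1$ this forces $\tfrac{s_n^2(R)}{2R}\le \tfrac{C_0 n^{2/3}}{2R}=O(R^{-1/3})\to0$, which is the claimed value (note $\sqrt{1-\nu^2}=0$ at $\nu=1$, so the two cases agree there).

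\textbf{Where the work is.} None of the individual estimates is hard once Theorem~\ref{th:BesselAsy} and Lemma~\ref{lmm:Decaysn2} are in hand; the only delicate point is the boundary $\nu=1$, where the stationary-phase estimate degenerates ($\alpha\to0$ forces $C(\eps)\to\infty$) and, simultaneously, the geometric decay in Lemma~\ref{lmm:Decaysn2} is lost because $x=R^2/n^2\to1$ makes $xe^{1-x}\to1$. The observation that saves the argument is that geometric decay is not needed at $\nu=1$: the uniform bound $xe^{1-x}\le1$ already leaves only the polynomial prefactor $n^{2/3}=O(R^{2/3})$, which is still $o(R)$ after normalization by $2R$. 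Thus the crude form of Lemma~\ref{lmm:Decaysn2} handles $\nu=1$ and $\nu>1$ uniformly, and the $\sqrt R$-error estimate handles every $\nu<1$, so the two cases together establish the corollary.
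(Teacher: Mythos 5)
Your proof is correct and follows essentially the route the paper intends: the paper states this corollary without a written proof, as an immediate consequence of the estimate $\bigl|s_n^2(R)-2\sqrt{R^2-n^2}\bigr|\le C(\eps)\sqrt{R}$ derived from theorem~\ref{th:BesselAsy} (your case $\nu<1$) together with the decay bound of lemma~\ref{lmm:Decaysn2} (your case $\nu\ge1$). Your filling-in of the details, in particular the uniform bound $xe^{1-x}\le 1$ that keeps lemma~\ref{lmm:Decaysn2} usable at the boundary $\nu=1$ where the geometric decay degenerates, is exactly what the paper leaves to the reader.
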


  \section{Proof of estimate \eqref{eq:ProofUncertainty3-1}}
  \label{app:ProofKrasikov}
  Let $n\in\Z$ and $\mu := (2n+1)(2n+3)$.
  Theorem~2 of \cite{Kra06} establishes that for $r > \frac{\sqrt{\mu+\mu^{\frac32}}}{2}$,
  \begin{equation}
    \label{eq:KrasikovThm2}
    J_n^2(r) 
    \,\leq\, \frac{4(4r^2-(2n+1)(2n+5))}{\pi((4r^2-\mu)^{\frac32}-\mu)} \,.
  \end{equation}
  The following lemma shows that, under the assumptions of
  theorem~\ref{th:Uncertainty3}, the estimate \eqref{eq:KrasikovThm2} implies
  \eqref{eq:ProofUncertainty3-1}.

  \begin{lemma}
    Let $M,N \geq 1$ and $r>2(M+N+1)$, then
    \begin{equation*}
      \sup_{-(M+N)<n<(M+N)} J_{n}^2(r)
      \,\le\, \frac{b}{r} \qquad \text{with $b \approx 0.7595$}\,.
    \end{equation*}
  \end{lemma}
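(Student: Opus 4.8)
The plan is to apply the Krasikov bound \eqref{eq:KrasikovThm2} to each admissible order and then reduce the resulting estimate to a one-variable optimization. First I would use $J_{-n}=(-1)^nJ_n$, so that $J_n^2=J_{-n}^2$, to restrict attention to $0\le n\le M+N-1$. The hypothesis $r>2(M+N+1)$ supplies the two facts that drive the argument: since $M,N\ge1$ we have $r>6$, and since $2n+1\le 2(M+N)-1<r$ and $2n+3\le 2(M+N)+1<r$, both factors of $\mu:=(2n+1)(2n+3)$ are smaller than $r$, whence $\mu<r^2=\tfrac14(4r^2)$. This last inequality is the quantitative heart of the proof.

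Next I would confirm that \eqref{eq:KrasikovThm2} is applicable by checking that its denominator is positive: from $\mu<r^2$ we get $4r^2-\mu>3r^2$, so $(4r^2-\mu)^{3/2}>3\sqrt3\,r^3>\mu$ once $r>6$. Since $(2n+1)(2n+5)\ge(2n+1)(2n+3)=\mu$, the numerator obeys $4r^2-(2n+1)(2n+5)\le 4r^2-\mu$, and \eqref{eq:KrasikovThm2} then yields $J_n^2(r)\le \frac{4(4r^2-\mu)}{\pi((4r^2-\mu)^{3/2}-\mu)}$. Introducing $s:=1-\mu/(4r^2)$, so that $s\in(\tfrac34,1)$ by the previous step and $4r^2-\mu=4r^2s$, $\mu=4r^2(1-s)$, a short simplification turns this into $r\,J_n^2(r)\le \tfrac{4s}{\pi(2s^{3/2}-(1-s)/r)}$.

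It then remains to maximize $F(s,r):=\tfrac{4s}{\pi(2s^{3/2}-(1-s)/r)}$ over $s\in(\tfrac34,1)$ and $r>6$. Because $1-s>0$, the denominator is increasing in $r$, so $F$ is decreasing in $r$; and writing the denominator divided by $s$ as $\pi\bigl(2\sqrt s+\tfrac1r(1-\tfrac1s)\bigr)$ shows it is increasing in $s$, so $F$ is decreasing in $s$ as well. Bounding each monotone direction at its separate extreme $s=\tfrac34$, $r=6$ then gives $r\,J_n^2(r)\le \tfrac{3}{\pi(\tfrac{3\sqrt3}{4}-\tfrac1{24})}\approx0.7595=b$, which is the assertion.

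The one subtlety worth flagging is the provenance of the constant. The true supremum of $r\,J_n^2(r)$ subject to these constraints is $4/(\pi\sqrt3)\approx0.735$, approached only as $M+N\to\infty$, where $s\to\tfrac34$ but $r\to\infty$ simultaneously. The slightly larger value $b\approx0.7595$ results from the deliberately loose move of evaluating the $s$-decreasing and $r$-decreasing parts at $s=\tfrac34$ and $r=6$, a pair that is never realized together. Consequently the proof carries no hard step; the only genuine care lies in verifying the two monotonicities of $F$ and the positivity of the denominator in \eqref{eq:KrasikovThm2}, both of which are elementary.
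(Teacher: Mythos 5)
Your reduction is, in substance, the paper's own argument in different variables: the paper parametrizes by $\eta=\sqrt{\mu}/2$ where you use $s=1-\mu/(4r^2)$, both proofs enlarge the numerator of \eqref{eq:KrasikovThm2} via $(2n+1)(2n+5)\ge\mu$, and both then decouple the two extremes $s\ge\frac34$ (equivalently $\eta/r\le\frac12$) and $r\ge6$; the resulting constants agree exactly, since $\frac{3}{\pi\left(\frac{3\sqrt3}{4}-\frac1{24}\right)}=\frac{72}{\pi(18\sqrt3-1)}=\frac2\pi\sqrt{\tfrac43}\left(1-\frac1{18\sqrt3}\right)^{-1}\approx0.7595$. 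Your monotonicity claims for $F(s,r)$ are also correct.

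There is, however, a genuine gap, and it sits precisely at the step you dismiss as routine: you never verify the hypothesis under which Krasikov's inequality holds. Confirming that the denominator of \eqref{eq:KrasikovThm2} is positive is not the same as confirming that the inequality is applicable; the bound is asserted only for $r>\frac{\sqrt{\mu+\mu^{3/2}}}{2}$ (as quoted in the paper), and your inequality $\mu<r^2$ does not imply that condition. Concretely, take $M=N=2$, $n=3$, $r=10.5>2(M+N+1)$: then $\mu=63$ and $\frac12\sqrt{\mu+\mu^{3/2}}\approx 11.9>r$, so the quoted hypothesis fails at admissible parameters, and your argument invokes the bound outside its stated range of validity. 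In fact the exponent $\frac32$ in \eqref{eq:KrasikovThm2} is a typo for Krasikov's actual condition $r>\frac12\sqrt{\mu+\mu^{2/3}}$, and the paper's proof spends its first display verifying exactly this: with $\eta^2\ge\frac34$ and $2\eta<r$ one gets $\frac12\sqrt{\mu+\mu^{2/3}}=\eta\sqrt{1+(4\eta^2)^{-1/3}}\le\sqrt2\,\eta<\frac{r}{\sqrt2}<r$. Under the corrected exponent your inequality $\mu<r^2$ would indeed suffice, since $\mu+\mu^{2/3}\le2\mu<4r^2$; but that verification --- and the recognition that one is needed at all --- is absent from your proof, so as written the application of \eqref{eq:KrasikovThm2} is unjustified, and this is the one step of the lemma where real care is required.
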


  \begin{proof}
    Since $J_{-n}^2(r)=J_n^2(r)$ we may assume w.l.o.g.\ that $n\geq0$.
    Let $\eta := \sqrt{(n+\frac12)(n+\frac32)}$.
    Then $\frac{\mu}4 = \eta^2 = n^2+2n+\frac34$, i.e.\
    \begin{equation}
      \label{eq:ProofKrasikov1}
      \frac{3}4 \,\leq\, \eta^2 \,\leq\, (n+1)^2
    \end{equation}
    and therefore our assumption $r>2(M+N+1)$ implies for $0\leq n<M+N$
    that 
    \begin{equation}
      \label{eq:ProofKrasikov2}
      r \,>\, 2(n+1) \,\geq\, 2 \eta \,.
    \end{equation}
    Accordingly,
    \begin{equation*}
      \frac12 \sqrt{\mu+\mu^{\frac32}}
      \,=\, \frac12 \sqrt{4\eta^2+(4\eta^2)^\frac32}
      \,=\, \eta \sqrt{1+\frac1{(4\eta^2)^{\frac13}}}
      \,\leq\, \eta \sqrt{1+\frac1{(4\bigl(\frac34\bigr)^2)^{\frac13}}}
      \,\leq\, \sqrt2 \eta
      \,\leq\, \frac{r}{\sqrt2} 
      \,\leq\, r \,.
    \end{equation*}
    This shows that the assumptions of theorem~2 of \cite{Kra06} are
    satisfied. 

    Next we consider \eqref{eq:KrasikovThm2} and further estimate its
    right hand side:
    \begin{equation*}
      \begin{split}
        J_n^2(r) 
        &\,\leq\, \frac{4(4r^2-(2n+1)(2n+5))}
        {\pi((4r^2-\mu)^{3/2}-\mu)}
        \,\leq\, \frac{4(4r^2-4\eta^2)}
        {\pi(8(r^2-\eta^2)^\frac32-4\eta^2)}\\
        &\,=\, \frac2\pi \frac1{(r^2-\eta^2)^\frac12} 
        \frac{1}{1-\frac12\frac{\eta^2}{(r^2-\eta^2)^{\frac32}}}
        \,=\, \frac2\pi \frac1r
        \frac1{\bigl(1-\bigl(\frac{\eta}{r}\bigr)^2\bigr)^\frac12} 
        \frac{1}{1-\frac12\frac{\eta^2}{(r^2-\eta^2)^{\frac32}}} \,.
      \end{split}
    \end{equation*}
    Since $r>2(M+N+1)\geq 6$, applying \eqref{eq:ProofKrasikov1} and
    \eqref{eq:ProofKrasikov2} yields
    \begin{equation*}
      \frac{\eta^2}{(r^2-\eta^2)^{\frac32}}
      \,=\, \frac1r \frac{\bigl(\frac{\eta}{r}\bigr)^2}
      {\bigl(1-\bigl(\frac{\eta}{r}\bigr)^2\bigr)^{\frac32}}
      \,\leq\, \frac1r \frac{\frac14}{\bigl(\frac34\bigr)^{\frac32}}
      \,=\, \frac1{3\sqrt{27}} \,,
    \end{equation*}
    whence
    \begin{equation*}
      J_n^2(r) 
      \,\leq\, \frac{2}{\pi} \frac1r 
      \Bigl(\frac43\Bigr)^{\frac12}
      \frac{1}{1-\frac12\frac1{3\sqrt{27}}}
      \,=\, \frac{b}{r} \qquad \text{with } b \approx 0.7595 \,.
    \end{equation*}
  \end{proof}

  \section{Some elementary inequalities}
  \label{app:SumEstimates}
  Here we prove some elementary inequalities that we haven't been able
  to find in the literature. 

  \begin{lemma}
    Let $a_1,\ldots,a_I \in \R$. 
    Then
    \begin{itemize}
    \item[(a)]
      \begin{equation}
        \label{eq:BasicIneq1}
        \sum_{i=1}^I \sum_{j\not=i}a_ia_j 
        \,\leq\, (I-1) \sum_{i=1}^I a_i^2 \,,
      \end{equation}
    \item[(b)]
      \begin{equation}
        \label{eq:BasicIneq2}
        \Bigl( \sum_{i=1}^I a_i \Bigr)^2
        \,\leq\, I \sum_{i=1}^I a_i^2 \,,
      \end{equation}
    \item[(c)]
      \begin{equation}
        \label{eq:BasicIneq3}
        \sum_{i=1}^I \sum_{j\not=i} a_ia_j 
        \,\leq\, \frac{I-1}{I} \Bigl(\sum_{i=1}^I a_i\Bigr)^2 \,.
      \end{equation}
    \end{itemize}
  \end{lemma}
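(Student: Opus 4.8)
The plan is to prove \eqref{eq:BasicIneq1} from scratch by the pairwise arithmetic--geometric mean inequality, and then obtain \eqref{eq:BasicIneq2} and \eqref{eq:BasicIneq3} as immediate algebraic consequences of \eqref{eq:BasicIneq1} by expanding the square $\bigl(\sum_i a_i\bigr)^2$. The central identity tying all three statements together is
\begin{equation*}
  \Bigl(\sum_{i=1}^I a_i\Bigr)^2
  \,=\, \sum_{i=1}^I a_i^2 + \sum_{i=1}^I\sum_{j\not=i} a_i a_j \,,
\end{equation*}
so that any two of the quantities $\bigl(\sum_i a_i\bigr)^2$, $\sum_i a_i^2$, and $\sum_i\sum_{j\not=i}a_ia_j$ determine the third.

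For part (a), I would start from the elementary bound $2a_ia_j \le a_i^2 + a_j^2$, valid for every pair $i\not=j$, and sum it over all ordered pairs with $i\not=j$. The resulting right-hand side splits into $\sum_i\sum_{j\not=i}a_i^2$ and $\sum_i\sum_{j\not=i}a_j^2$; in each of these double sums exactly $I-1$ terms contribute to each fixed index, so both pieces equal $(I-1)\sum_i a_i^2$. Dividing by $2$ then gives \eqref{eq:BasicIneq1}. The only thing to get right here is the bookkeeping of how many times each $a_i^2$ appears, which is routine.

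For part (b), I would insert \eqref{eq:BasicIneq1} into the identity above: since $\sum_i\sum_{j\not=i}a_ia_j \le (I-1)\sum_i a_i^2$, the square is bounded by $\sum_i a_i^2 + (I-1)\sum_i a_i^2 = I\sum_i a_i^2$, which is \eqref{eq:BasicIneq2}. (Equivalently, \eqref{eq:BasicIneq2} is just Cauchy--Schwarz against the all-ones vector.) Finally, for part (c) I would rearrange the same identity as $\sum_i\sum_{j\not=i}a_ia_j = \bigl(\sum_i a_i\bigr)^2 - \sum_i a_i^2$ and apply \eqref{eq:BasicIneq2} in the form $\sum_i a_i^2 \ge \tfrac1I\bigl(\sum_i a_i\bigr)^2$; subtracting yields $\sum_i\sum_{j\not=i}a_ia_j \le \tfrac{I-1}{I}\bigl(\sum_i a_i\bigr)^2$, i.e.\ \eqref{eq:BasicIneq3}. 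Since each inequality reduces to the previous one by a single algebraic step, there is no genuine obstacle here; the only care needed is the counting argument underlying \eqref{eq:BasicIneq1}, and the observation that (b) and (c) are in fact equivalent reformulations once the expansion of the square is in hand.
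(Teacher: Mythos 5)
Your proposal is correct and follows essentially the same route as the paper: your pairwise bound $2a_ia_j \le a_i^2+a_j^2$ summed over ordered pairs is exactly the paper's observation that $\sum_{i}\sum_{j\not=i}(a_j-a_i)^2 \ge 0$, and both arguments then obtain (b) and (c) as one-line consequences of the expansion $\bigl(\sum_i a_i\bigr)^2 = \sum_i a_i^2 + \sum_i\sum_{j\not=i}a_ia_j$. The only cosmetic difference is that you deduce (c) from (b), while the paper applies (a) directly inside the same identity; the two steps are interchangeable.
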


  \begin{proof}
    \begin{itemize}
    \item[(a)]
      \begin{equation*}
        0 
        \,\leq\, \sum_{i=1}^I \sum_{j\not=i}(a_j-a_i)^2
        \,=\, \sum_{i=1}^I \sum_{j\not=i}(a_i^2-2a_ia_j+a_j^2)
        \,=\, 2(I-1) \sum_{i=1}^I a_i^2 
        - 2 \sum_{i=1}^I \sum_{j\not=i} a_ia_j \,.
      \end{equation*}

    \item[(b)] Using \eqref{eq:BasicIneq1} we find that
      \begin{equation*}
        \Bigl(\sum_{i=1}^I a_i\Bigr)^2
        \,=\, \sum_{i=1}^I a_i^2 
        + \sum_{i=1}^I\sum_{j\not=i}a_ia_j
        \,\leq\, I\sum_{i=1}^I a_i^2 \,.
      \end{equation*}
    \item[(c)] Proceeding as in (b) but applying \eqref{eq:BasicIneq1}
      the other way round yields 
      \begin{equation*}
        \Bigl(\sum_{i=1}^I a_i\Bigr)^2
        \,=\, \sum_{i=1}^I a_i^2 + \sum_{i=1}^I\sum_{j\not=i}a_ia_j
        \,\geq\, \frac{I}{I-1} \sum_{i=1}^I\sum_{j\not=i}a_ia_j \,.
      \end{equation*}
    \end{itemize}
  \end{proof}

\end{appendix}


{\small
   }

\end{document}